\newtheorem{lemma}{Lemma}
\newtheorem{theorem}{Theorem}
\newtheorem{cor}{Corollary}
\newtheorem{prop}{Proposition}
\newcommand{\mbb}[1]{\mathbb{#1}}
\newcommand{\mc}[1]{\mathcal{#1}}
\newcommand{\floor}[1]{\left\lfloor #1\right\rceil}
\newcommand{\rem}[1]{\left\langle#1\right\rangle}
\begin{document}
\title{Badly approximable numbers over imaginary quadratic fields}
\author[Robert Hines]{Robert Hines}
\address{
Department of Mathematics, University of Colorado,
Campus Box 395, Boulder, Colorado 80309-0395}
\email{robert.hines@colorado.edu}

\begin{abstract}
We recall the notion of nearest integer continued fractions over the Euclidean imaginary quadratic fields $K$ and characterize the ``badly approximable'' numbers, ($z$ such that there is a $C=C(z)>0$ with $|z-p/q|\geq C/|q|^2$ for all $p/q\in K$), by boundedness of the partial quotients in the continued fraction expansion of $z$.  Applying this algorithm to ``tagged'' indefinite integral binary Hermitian forms demonstrates the existence of entire circles in $\mbb{C}$ whose points are badly approximable over $K$, with effective constants.

By other methods (the Dani correspondence), we prove the existence of circles of badly approximable numbers over \textit{any} imaginary quadratic field.  Among these badly approximable numbers are algebraic numbers of every even degree over $\mbb{Q}$, which we characterize.  All of the examples we consider are associated with cocompact Fuchsian subgroups of the Bianchi groups $SL_2(\mc{O})$, where $\mc{O}$ is the ring of integers in an imaginary quadratic field.
\end{abstract}
\maketitle

%%%%%%%%%%%%%%%%%%%%%%%%%%%%%%%%%%%%%%%%%%%%%%%%%%%%%%%%%%%%%%%%%%%%%%%%%%%%%%%%%%%%%%%%%%%%%%
%%%%%%%%%%%%%%%%%%%%%%%%%%%%%%%%%%%%%%%%%%%%%%%%%%%%%%%%%%%%%%%%%%%%%%%%%%%%%%%%%%%%%%%%%%%%%%
%%%%%%%%%%%%%%%%%%%%%%%%%%%%%%%%%%%%%%%%%%%%%%%%%%%%%%%%%%%%%%%%%%%%%%%%%%%%%%%%%%%%%%%%%%%%%%
%%%%%%%%%%%%%%%%%%%%%%%%%%%%%%%%%%%%%%%%%%%%%%%%%%%%%%%%%%%%%%%%%%%%%%%%%%%%%%%%%%%%%%%%%%%%%%
%%%%%%%%%%%%%%%%%%%%%%%%%%%%%%%%%%%%%%%%%%%%%%%%%%%%%%%%%%%%%%%%%%%%%%%%%%%%%%%%%%%%%%%%%%%%%%
\section*{Introduction}

% Iterating the division algorithm $(a,b)\mapsto(b,a\bmod b)$
% $$
% a=bq+r, \ 0\leq r<|b| \ (\text{or perhaps} -|b|/2\leq r<|b|/2),
% $$
% or in matrices
% $$
% \left(\begin{array}{cc}
% 0&1\\
% 1&0\\
% \end{array}
% \right)
% \left(\begin{array}{cc}
% 1&-q\\
% 0&1\\
% \end{array}
% \right)
% \left(\begin{array}{c}
% a\\
% b\\
% \end{array}
% \right)=
% \left(\begin{array}{c}
% b\\
% r\\
% \end{array}
% \right),
% $$
% leads naturally to a continued fraction expansion of a real number
% $$
% z=[a_0;a_1,a_2,\ldots]=a_0+\frac{1}{a_1+\frac{1}{a_2+\dots}}, \ a_i\in\mbb{Z}
% $$
% (the $a_i$ are the \textit{partial quotients}, the integer part or nearest integer we subtract at each step) obtained by iterating the map
% $$
% z\mapsto\rem{\frac{1}{z}}\in[0,1) \ (\text{or perhaps } [-1/2,1/2))
% $$
% where $\rem{1/z}$ denotes the fractional part or remainder of $1/z$.  The \text{convergents}
% $$
% p_n/q_n=[a_0;a_1,\ldots,a_n], \ \left(\begin{array}{cc}
% p_n&p_{n-1}\\
% q_n&q_{n-1}\\
% \end{array}
% \right)=
% \left(\begin{array}{cc}
% a_0&1\\
% 1&0\\
% \end{array}
% \right)
% \cdots\left(\begin{array}{cc}
% a_n&1\\
% 1&0\\
% \end{array}
% \right),
% $$
% give good rational approximations to $z$.  These are the simple continued fractions ($z\in[0,1)$) and nearest integer continued fractions ($z\in[-1/2,1/2)$).

A natural generalization of continued fractions to complex numbers over appropriate discrete subrings $\mc{O}$ of $\mbb{C}$, in particular over $\mbb{Z}[\sqrt{-1}]$ and $\mbb{Z}\left[\frac{1+\sqrt{-3}}{2}\right]$, was introduced by A. Hurwitz, \cite{Hur1}.  Let $K$ be one of the Euclidean imaginary quadratic fields and $\mc{O}$ its ring of integers.  We write a complex number uniquely as $\floor{z}+\rem{z}$ with $\floor{z}\in\mc{O}$ the nearest integer to $z$ and $\rem{z}\in V$, where $V$ is the collection of complex numbers closer to zero than to any other point of the lattice $\mc{O}$ (with some choice along the boundary of $V$).  For $z\in V$ we iterate the map $T(z)=\rem{1/z}$, $T^n(z)=:z_n$, to obtain the continued fraction
$$
z=[a_0;a_1,a_2,\ldots]=a_0+\frac{1}{a_1+\frac{1}{a_2+\dots}}, \ \floor{1/z_i}=a_{i+1}\in\mc{O},
$$
and convergents $p_n/q_n=[a_0;a_1,\ldots,a_n]$.

It is known that the convergents $p_n/q_n$ from the above algorithms all satisfy
$$
|z-p_n/q_n|\leq C/|q_n|^2
$$
for some $C>0$.  See Proposition \ref{prop:approx_const} below for a proof and Theorem 1 of \cite{La} for the smallest values of $C$.  This is a simple algorithmic realization of Dirichlet's theorem that for irrational $z\in\mbb{C}$, there are infinitely many $p/q\in K$ satisfying the above inequality.  A number $z$ is \textit{badly approximable} if the exponent of two is the best possible, i.e. $z$ is badly approximable if there exists $C'>0$ such that for any $p/q\in K$ we have
$$
|z-p/q|\geq C'/|q|^2.
$$
It is well-known that a real number is badly approximable over $\mbb{Q}$ if and only if its partial quotients $a_n$ are bounded.  We show below (Theorem \ref{thm:bndpartquots}) that this is the case for nearest integer continued fractions over $K$ as well, relying on the work of Lakein \cite{La} who investigated the quality of approximation of the nearest integer convergents.

It is a folklore conjecture that the only real algebraic numbers with bounded partial quotients in their continued fraction expansion are the quadratic irrationals, whose partial quotients are eventually periodic.  However, it is shown in \cite{BG}, using methods expanded upon in this paper, that the analogous conjecture does not hold exactly over $\mbb{Q}(\sqrt{-1})$.  There are examples of algebraic numbers of relative degree greater than two over $\mbb{Q}(\sqrt{-1})$ whose nearest integer continued fraction expansions have bounded partial quotients.  Examples of this phenomenon were first detailed by Hensely, cf. \cite{Hen} $\S 5.6$.  While these examples are not quadratic over $\mbb{Q}(\sqrt{-1})$, they are associated with closed geodesic surfaces in the Bianchi orbifold $SL_2(\mbb{Z}[i])\backslash\mbb{H}^3$ in the same way that real quadratic irrationals are associated to closed geodesics on the modular surface $SL_2(\mbb{Z})\backslash\mbb{H}^2$ .

The first objective of this paper is to make explicit the connection (implicit in \cite{Hen} for $\mbb{Q}(\sqrt{-1})$ and explicit in \cite{Da2} for $\mbb{Q}(\sqrt{-3})$) ``badly approximable $\Longleftrightarrow$ bounded partial quotients'' for nearest integer continued fractions over $K$, where $K$ is any of the Euclidean imaginary quadratic fields, and to explore a class of complex numbers with ``atypical''  behavior, namely those lying on $K$-rational circles or lines, which include examples of algebraic numbers with bounded partial quotients (extending the results of \cite{BG}).  In particular, we prove the following.
\begin{itemize}
\item{\textbf{(Theorem \ref{thm:bndpartquots})}}
A number $z\in\mbb{C}$ is badly approximable over $K$ if and only if its partial quotients are bounded in norm.  Moreover an explicit approximation constant is given as a function of the bound on the partial quotients.
\item{\textbf{(Theorem \ref{thm:finite_orbit})}}
If $z\in\mbb{C}$ lies on a $K$-rational circle or line, (i.e. $(z,1)$ is a zero of the indefinite integral binary Hermitian form form $H(z,w)=Az\overline{z}-\overline{B}z\overline{w}-B\overline{z}w+Cw\overline{z}$, $A,C\in\mbb{Z}$, $B\in\mc{O}$), then its remainders $z_n=T^n(z)$ are ``atypical'' in that they lie on a finite number of lines and circular arcs (cf. Figures \ref{fig:circgauss}, \ref{fig:circeisen}).
\item{\textbf{(Corollary \ref{cor:aniso}, Corollary \ref{cor:bounds})}}
Moreover, if the rational circle on which $z$ lies does not contain any rational points, (i.e. the indefinite integral binary Hermitian form $H$ is anisotropic), then the remainders $z_n$ are bounded away from zero and the partial quotients $a_n$ are bounded in norm.  We give explicit bounds on $a_n$, $z_n$ in terms of $H$ and $K$.
\item{\textbf{(Corollary \ref{cor:algebraic})}}
There are algebraic numbers of every even degree over $\mbb{Q}$ that are badly approximable over $K$ (with effective approximation constant).  We also provide a characterization of these badly approximable algebraic numbers.
\end{itemize}

The second objective of this paper is to show that the main results above hold over every imaginary quadratic field, possibly non-Euclidean, but without effective constants.  Instead of using continued fractions, we employ a version of the Dani correspondence (Theorem \ref{thm:dani}) characterizing badly approximable numbers in terms of bounded geodesic trajectories in the Bianchi orbifolds.  In particular we have the following.
\begin{itemize}
\item{\textbf{(Theorem \ref{thm:badapproxcirc}, Corollary \ref{cor:algebraic})}}
Let $K$ be any imaginary quadratic field.  If $z\in\mbb{C}$ lies on a $K$-rational circle without rational points (i.e. $H(z,1)=0$ for an anisotropic indefinite binary Hermitian form with coefficients in $K$) then $z$ is badly approximable over $K$.  In particular, there are algebraic numbers of every even degree over $\mbb{Q}$ that are badly approximable over $K$, which we characterize.
\end{itemize}

In the mathoverflow post \cite{Di}, which inspired this work, the question is raised as to whether or not the examples of \cite{BG} exhaust the badly approximable algebraic numbers over $\mbb{Q}(\sqrt{-1})$.  Obvious ways to stay out of the cusps of $SL_2(\mc{O})\backslash\mbb{H}^3$ are to consider closed geodesics (anisotropic indefinite integral binary quadratic forms, i.e. quadratic irrationals) or compact geodesic surfaces (anisotropic integral indefinite binary Hermitian forms, giving the examples we explore in this paper).  Whether badly approximable numbers \textit{algebraic} over $K$ must be associated to compact geodesic surfaces in the Bianchi orbifolds is an interesting question (an extension of the folklore conjecture above), although it is not clear to the author why this should be so.
\subsection*{Acknowledgments}
The author would like to thank Katherine Stange for many helpful conversations and for reading various drafts of this document.

% Some questions raised by our investigation below are:
% \begin{itemize}
% \item
% What is the distribution of the remainders $z_n$, $z\in Z(H)$, on the finitely many arcs in $V$?
% \item
% Are there algebraic numbers with unbounded partial quotients?  This seems true experimentally for generic algebraic numbers, and even for the special algebraic numbers $z\in Z(H)$ with $H$ isotropic (e.g. the images in Figures \ref{fig:circgauss} and \ref{fig:circeisen} with arcs going through the origin).
% \item
% Can the results above be extended to other imaginary quadratic fields?  Reduction theory for indefinite integral binary Hermitian forms is essentially the same for any of the Bianchi groups (e.g. \cite{EGM}), but some reasonable notion of continued fractions would need to be developed.
% \end{itemize}

%%%%%%%%%%%%%%%%%%%%%%%%%%%%%%%%%%%%%%%%%%%%%%%%%%%%%%%%%%%%%%%%%%%%%%%%%%%%%%%%%%%%%%%%%%%%%%
%%%%%%%%%%%%%%%%%%%%%%%%%%%%%%%%%%%%%%%%%%%%%%%%%%%%%%%%%%%%%%%%%%%%%%%%%%%%%%%%%%%%%%%%%%%%%%
%%%%%%%%%%%%%%%%%%%%%%%%%%%%%%%%%%%%%%%%%%%%%%%%%%%%%%%%%%%%%%%%%%%%%%%%%%%%%%%%%%%%%%%%%%%%%%
%%%%%%%%%%%%%%%%%%%%%%%%%%%%%%%%%%%%%%%%%%%%%%%%%%%%%%%%%%%%%%%%%%%%%%%%%%%%%%%%%%%%%%%%%%%%%%
%%%%%%%%%%%%%%%%%%%%%%%%%%%%%%%%%%%%%%%%%%%%%%%%%%%%%%%%%%%%%%%%%%%%%%%%%%%%%%%%%%%%%%%%%%%%%%
\section*{Nearest integer continued fractions over the Euclidean imaginary quadratic fields}
Let $K=K_d=\mbb{Q}(\sqrt{-d})$, $d>0$ a square-free integer, be an imaginary quadratic field and $\mc{O}=\mc{O}_d$ the ring of integers of $K$.  For $d=1,2,3,7,11$ the $\mc{O}_d$ are Euclidean with respect to the usual norm $|z|^2=z\overline{z}$, noting that the collection of disks $\{z\in\mbb{C} : |z-r|<1\}_{r\in\mc{O}}$ cover the plane, and in fact are the only $d$ for which $\mc{O}_d$ is Euclidean with respect to any function (cf. \cite{Le} \S4).  Consider the open Vorono\"{i} cell for $\mc{O}_d\subseteq\mbb{C}$, the collection of points closer to zero than to any other lattice point, along with a subset $\mc{E}$ of the boundary, so that we obtain a strict fundamental domain for the additive action of $\mc{O}$ on $\mbb{C}$,
$$
V=V_d=\{z\in\mbb{C} : |z|<|z-r|, \ r\in\mc{O}\}\cup\mc{E}, \ \mc{E}\subseteq\partial V.
$$
For the Euclidean values of $d$, and only for these values, $V_d$ is contained in the open unit disk.  The regions $V_d$ are rectangles for $d=1,2$ and hexagons for $d=3,7,11$; see Figure \ref{fig:basediag}.  For $z\in\mbb{C}$, we denote by $\floor{z}\in\mc{O}$ and $\rem{z}\in V$ the nearest integer and remainder, uniquely satisfying
$$
z=\floor{z}+\rem{z}.
$$
We now restrict ourselves to Euclidean $K$ to describe the continued fraction algorithm and applications, but we will return to arbitray imaginary quadratic $K$ in a later section.

We have an almost everywhere defined map $T=T_d:V_d\to V_d$ given by $T(z)=\rem{1/z}$.  For $z\in\mbb{C}$ define sequences $a_n\in\mc{O}$, $z_n\in V$, for $n\geq0$:
$$
a_0=\floor{z}, z_0=z-a_0=\rem{z}, \ a_n=\floor{\frac{1}{z_{n-1}}}, \ z_{n}=\rem{\frac{1}{z_{n-1}}}=\frac{1}{z_{n-1}}-a_n=T^n(z_0).
$$
In this way, we obtain a continued fraction expansion for $z\in\mbb{C}$,
$$
z=a_0+\frac{1}{a_1+\frac{1}{a_2+\dots}}=:[a_0;a_1,a_2,\ldots],
$$
where the expansion is finite for $z\in K$.  The convergents to $z$ will be denoted by
$$
\frac{p_n}{q_n}=[a_0;a_1,\ldots,a_n],
$$
where $p_n$, $q_n$ are defined by
$$
\left(\begin{array}{cc}
p_n&p_{n-1}\\q_n&q_{n-1}\\
\end{array}\right)
=\left(\begin{array}{cc}
a_0&1\\1&0\\
\end{array}\right)
\cdots
\left(
\begin{array}{cc}
a_n&1\\
1&0\\
\end{array}
\right).
$$
Here are a few easily verified algebraic properties that will be used below:
\begin{align*}
q_nz-p_n&=(-1)^nz_0\cdot\ldots\cdot z_n, \ z=\frac{p_n+z_np_{n-1}}{q_n+z_nq_{n-1}},\\
z-\frac{p_n}{q_n}&=\frac{(-1)^n}{q_n^2(z_n^{-1}+q_{n-1}/q_n)}, \ \frac{q_n}{q_{n-1}}=a_n+\frac{q_{n-2}}{q_{n-1}}.
\end{align*}
The first equality proves convergence $p_n/q_n\to z$ for irrational $z$ and gives a rate of convergence exponential in $n$.  A useful parameter is $\rho=\rho_d$, the radius of the smallest circle around zero containing $V_d$,
$$
\rho_d=\frac{\sqrt{1+d}}{2}, \ d=1,2, \ \rho_d=\frac{1+d}{4\sqrt{d}}, \ d=3,7,11.
$$
We note that $|a_n|\geq1/\rho_d$ for $n\geq1$, which is easily verified for each $d$.

Taking the transpose of the matrix expression above, we have the equality
$$
\frac{q_n}{q_{n-1}}=a_n+\frac{1}{a_{n-1}+\frac{1}{\dots+\frac{1}{a_1}}}\overset{alg.}{=}[a_n;a_{n-1},\ldots,a_1]
$$
as rational numbers (indicated by the overset ``alg.''), but this does not hold at the level of continued fractions, i.e. the continued fraction expansion of $q_{n-1}/q_n$ is not necessarily $[a_n;a_{n-1},\ldots,a_1]$.  See Figure \ref{fig:qratio} for the distribution of $q_{n-1}/q_n$, for 5000 random numbers and $1\leq n\leq10$, over $\mbb{Q}(\sqrt{-1})$ and $\mbb{Q}(\sqrt{-3})$.  The bounds $|q_{n+2}/q_n|\geq3/2$ are proved in \cite{Hen} and \cite{Da1} for $d=1$ and $3$ respectively.

\begin{figure}[hbt]
\begin{center}
%%%%%%%%%%%%%%%%%%%%%%%%%%%%%%%%%%%%
%%%%%%%%%%%%%%% d=1 %%%%%%%%%%%%%%%%
%%%%%%%%%%%%%%%%%%%%%%%%%%%%%%%%%%%%
\begin{tikzpicture}[scale=1]
%\draw[color=blue](1/2,1/2)--(1/2,-1/2)--(-1/2,-1/2)--(-1/2,1/2)--(1/2,1/2);
\draw[color=blue](-2.5,2.5)--(2.5,2.5);
\draw[color=blue](-2.5,1.5)--(2.5,1.5);
\draw[color=blue](-2.5,0.5)--(2.5,0.5);
\draw[color=blue](-2.5,-0.5)--(2.5,-0.5);
\draw[color=blue](-2.5,-1.5)--(2.5,-1.5);
\draw[color=blue](-2.5,-2.5)--(2.5,-2.5);

\draw[color=blue](-2.5,-2.5)--(-2.5,2.5);
\draw[color=blue](-1.5,-2.5)--(-1.5,2.5);
\draw[color=blue](-0.5,-2.5)--(-0.5,2.5);
\draw[color=blue](0.5,-2.5)--(0.5,2.5);
\draw[color=blue](1.5,-2.5)--(1.5,2.5);
\draw[color=blue](2.5,-2.5)--(2.5,2.5);
\draw(0,0)circle(1);
% \draw[color=red](0,1)circle(1);
% \draw[color=red](0,-1)circle(1);
% \draw[color=red](1,0)circle(1);
% \draw[color=red](-1,0)circle(1);
\draw[color=red](1,1)arc(0:180:1);
\draw[color=red](-1,1)arc(90:270:1);
\draw[color=red](-1,-1)arc(180:360:1);
\draw[color=red](1,-1)arc(-90:90:1);

% \draw[fill](2,-2)circle(1pt);
% \draw[fill](-2,-2)circle(1pt);
% \draw[fill](0,-2)circle(1pt);
% \draw[fill](1,-2)circle(1pt);
% \draw[fill](-1,-2)circle(1pt);
% 
% \draw[fill](2,2)circle(1pt);
% \draw[fill](-2,2)circle(1pt);
% \draw[fill](0,2)circle(1pt);
% \draw[fill](1,2)circle(1pt);
% \draw[fill](-1,2)circle(1pt);
% 
% \draw[fill](2,-1)circle(1pt);
% \draw[fill](-2,-1)circle(1pt);
% \draw[fill](0,-1)circle(1pt);
% \draw[fill](1,-1)circle(1pt);
% \draw[fill](-1,-1)circle(1pt);
% 
% \draw[fill](2,1)circle(1pt);
% \draw[fill](-2,1)circle(1pt);
% \draw[fill](0,1)circle(1pt);
% \draw[fill](1,1)circle(1pt);
% \draw[fill](-1,1)circle(1pt);
% 
% \draw[fill](2,0)circle(1pt);
% \draw[fill](-2,0)circle(1pt);
% \draw[fill](0,0)circle(1pt);
% \draw[fill](1,0)circle(1pt);
% \draw[fill](-1,0)circle(1pt);
\end{tikzpicture}
%%%%%%%%%%%%%%%%%%%%%%%%%%%%%%%%%%%%
%%%%%%%%%%%%%%% d=2 %%%%%%%%%%%%%%%%
%%%%%%%%%%%%%%%%%%%%%%%%%%%%%%%%%%%%
\begin{tikzpicture}[scale=1]
% \draw[color=blue](1/2,1.414/2)--(1/2,-1.414/2)--(-1/2,-1.414/2)--(-1/2,1.414/2)--(1/2,1.414/2);
\draw[color=blue](-2.5,3*1.414/2)--(-2.5,-3*1.414/2);
\draw[color=blue](-1.5,3*1.414/2)--(-1.5,-3*1.414/2);
\draw[color=blue](-.5,3*1.414/2)--(-.5,-3*1.414/2);
\draw[color=blue](.5,3*1.414/2)--(.5,-3*1.414/2);
\draw[color=blue](1.5,3*1.414/2)--(1.5,-3*1.414/2);
\draw[color=blue](2.5,3*1.414/2)--(2.5,-3*1.414/2);

\draw[color=blue](-2.5,3*1.414/2)--(2.5,3*1.414/2);
\draw[color=blue](-2.5,1.414/2)--(2.5,1.414/2);
\draw[color=blue](-2.5,-1.414/2)--(2.5,-1.414/2);
\draw[color=blue](-2.5,-3*1.414/2)--(2.5,-3*1.414/2);

\draw(0,0)circle(1);
%\draw(0,0)circle(2/1.732);
%\draw[color=red](1,0)circle(1);
%\draw[color=red](-1,0)circle(1);
%\draw[color=red](0,1.414/2)circle(1.414/2);
%\draw[color=red](0,-1.414/2)circle(1.414/2);

\draw [red,domain=70.53-180:180-70.53,samples=500] plot ({cos(\x)+1}, {sin(\x)});
\draw [red,domain=70.53:360-70.53,samples=500] plot ({cos(\x)-1}, {sin(\x)});

\draw [red,domain=90-70.52:90+70.52,samples=500] plot ({1.414*cos(\x)/2}, {1.414*sin(\x)/2+1.414/2});
\draw [red,domain=-90-70.52:-90+70.52,samples=500] plot ({1.414*cos(\x)/2}, {1.414*sin(\x)/2-1.414/2});

% \draw[fill](0,1.414)circle(1pt);
% \draw[fill](0,-1.414)circle(1pt);
% \draw[fill](1,-1.414)circle(1pt);
% \draw[fill](-1,-1.414)circle(1pt);
% \draw[fill](1,1.414)circle(1pt);
% \draw[fill](-1,1.414)circle(1pt);
% \draw[fill](1,0)circle(1pt);
% \draw[fill](-1,0)circle(1pt);
% \draw[fill](0,0)circle(1pt);
\end{tikzpicture}
%%%%%%%%%%%%%%%%%%%%%%%%%%%%%%%%%%%%
%%%%%%%%%%%%%%% d=3 %%%%%%%%%%%%%%%%
%%%%%%%%%%%%%%%%%%%%%%%%%%%%%%%%%%%%
\begin{tikzpicture}[scale=1]
\draw[color=blue](0,0.577)--(1/2,0.288)--(1/2,-0.288)--(0,-0.577)--(-1/2,-0.288)--(-1/2,0.288)--(0,0.577);
\draw[color=blue](0+1,0.577)--(1/2+1,0.288)--(1/2+1,-0.288)--(0+1,-0.577)--(-1/2+1,-0.288)--(-1/2+1,0.288)--(0+1,0.577);
\draw[color=blue](0+2,0.577)--(1/2+2,0.288)--(1/2+2,-0.288)--(0+2,-0.577)--(-1/2+2,-0.288)--(-1/2+2,0.288)--(0+2,0.577);
\draw[color=blue](0-1,0.577)--(1/2-1,0.288)--(1/2-1,-0.288)--(0-1,-0.577)--(-1/2-1,-0.288)--(-1/2-1,0.288)--(0-1,0.577);
\draw[color=blue](0-2,0.577)--(1/2-2,0.288)--(1/2-2,-0.288)--(0-2,-0.577)--(-1/2-2,-0.288)--(-1/2-2,0.288)--(0-2,0.577);

\draw[color=blue](0-1/2,0.577+0.866)--(1/2-1/2,0.288+0.866)--(1/2-1/2,-0.288+0.866)--(0-1/2,-0.577+0.866)--(-1/2-1/2,-0.288+0.866)--(-1/2-1/2,0.288+0.866)--(0-1/2,0.577+0.866);
\draw[color=blue](0+1-1/2,0.577+0.866)--(1/2+1-1/2,0.288+0.866)--(1/2+1-1/2,-0.288+0.866)--(0+1-1/2,-0.577+0.866)--(-1/2+1-1/2,-0.288+0.866)--(-1/2+1-1/2,0.288+0.866)--(0+1-1/2,0.577+0.866);
\draw[color=blue](0+2-1/2,0.577+0.866)--(1/2+2-1/2,0.288+0.866)--(1/2+2-1/2,-0.288+0.866)--(0+2-1/2,-0.577+0.866)--(-1/2+2-1/2,-0.288+0.866)--(-1/2+2-1/2,0.288+0.866)--(0+2-1/2,0.577+0.866);
\draw[color=blue](0-1-1/2,0.577+0.866)--(1/2-1-1/2,0.288+0.866)--(1/2-1-1/2,-0.288+0.866)--(0-1-1/2,-0.577+0.866)--(-1/2-1-1/2,-0.288+0.866)--(-1/2-1-1/2,0.288+0.866)--(0-1-1/2,0.577+0.866);

\draw[color=blue](0-1/2,0.577-0.866)--(1/2-1/2,0.288-0.866)--(1/2-1/2,-0.288-0.866)--(0-1/2,-0.577-0.866)--(-1/2-1/2,-0.288-0.866)--(-1/2-1/2,0.288-0.866)--(0-1/2,0.577-0.866);
\draw[color=blue](0+1-1/2,0.577-0.866)--(1/2+1-1/2,0.288-0.866)--(1/2+1-1/2,-0.288-0.866)--(0+1-1/2,-0.577-0.866)--(-1/2+1-1/2,-0.288-0.866)--(-1/2+1-1/2,0.288-0.866)--(0+1-1/2,0.577-0.866);
\draw[color=blue](0+2-1/2,0.577-0.866)--(1/2+2-1/2,0.288-0.866)--(1/2+2-1/2,-0.288-0.866)--(0+2-1/2,-0.577-0.866)--(-1/2+2-1/2,-0.288-0.866)--(-1/2+2-1/2,0.288-0.866)--(0+2-1/2,0.577-0.866);
\draw[color=blue](0-1-1/2,0.577-0.866)--(1/2-1-1/2,0.288-0.866)--(1/2-1-1/2,-0.288-0.866)--(0-1-1/2,-0.577-0.866)--(-1/2-1-1/2,-0.288-0.866)--(-1/2-1-1/2,0.288-0.866)--(0-1-1/2,0.577-0.866);

\draw[color=blue](0,0.577+1.732)--(1/2,0.288+1.732)--(1/2,-0.288+1.732)--(0,-0.577+1.732)--(-1/2,-0.288+1.732)--(-1/2,0.288+1.732)--(0,0.577+1.732);
\draw[color=blue](0+1,0.577+1.732)--(1/2+1,0.288+1.732)--(1/2+1,-0.288+1.732)--(0+1,-0.577+1.732)--(-1/2+1,-0.288+1.732)--(-1/2+1,0.288+1.732)--(0+1,0.577+1.732);
\draw[color=blue](0-1,0.577+1.732)--(1/2-1,0.288+1.732)--(1/2-1,-0.288+1.732)--(0-1,-0.577+1.732)--(-1/2-1,-0.288+1.732)--(-1/2-1,0.288+1.732)--(0-1,0.577+1.732);

\draw[color=blue](0,0.577-1.732)--(1/2,0.288-1.732)--(1/2,-0.288-1.732)--(0,-0.577-1.732)--(-1/2,-0.288-1.732)--(-1/2,0.288-1.732)--(0,0.577-1.732);
\draw[color=blue](0+1,0.577-1.732)--(1/2+1,0.288-1.732)--(1/2+1,-0.288-1.732)--(0+1,-0.577-1.732)--(-1/2+1,-0.288-1.732)--(-1/2+1,0.288-1.732)--(0+1,0.577-1.732);
\draw[color=blue](0-1,0.577-1.732)--(1/2-1,0.288-1.732)--(1/2-1,-0.288-1.732)--(0-1,-0.577-1.732)--(-1/2-1,-0.288-1.732)--(-1/2-1,0.288-1.732)--(0-1,0.577-1.732);

\draw(0,0)circle(1);
% \draw[color=red](1,0)circle(1);
% \draw[color=red](-1,0)circle(1);
%\draw[color=red](1/2,0.866)circle(1);
%\draw[color=red](1/2,-0.866)circle(1);
%\draw[color=red](-1/2,0.866)circle(1);
%\draw[color=red](-1/2,-0.866)circle(1);

\draw [red,domain=0:120,samples=500] plot ({cos(\x)+1/2}, {sin(\x)+0.866});
\draw [red,domain=60:180,samples=500] plot ({cos(\x)-1/2}, {sin(\x)+0.866});

\draw [red,domain=-120:0,samples=500] plot ({cos(\x)+1/2}, {sin(\x)-0.866});
\draw [red,domain=-180:-60,samples=500] plot ({cos(\x)-1/2}, {sin(\x)-0.866});

\draw [red,domain=-60:60,samples=500] plot ({cos(\x)+1}, {sin(\x)});
\draw [red,domain=120:240,samples=500] plot ({cos(\x)-1}, {sin(\x)});

% \draw[fill](1/2,0.866)circle(1pt);
% \draw[fill](-1/2,0.866)circle(1pt);
% \draw[fill](1/2,-0.866)circle(1pt);
% \draw[fill](-1/2,-0.866)circle(1pt);
% \draw[fill](1,0)circle(1pt);
% \draw[fill](-1,0)circle(1pt);
% \draw[fill](0,0)circle(1pt);
% \draw[fill](0,-1.732)circle(1pt);
% \draw[fill](0,1.732)circle(1pt);
% \draw[fill](1,-1.732)circle(1pt);
% \draw[fill](1,1.732)circle(1pt);
% \draw[fill](-1,-1.732)circle(1pt);
% \draw[fill](-1,1.732)circle(1pt);
% \draw[fill](2,0)circle(1pt);
% \draw[fill](-2,0)circle(1pt);
% \draw[fill](3/2,0.866)circle(1pt);
% \draw[fill](-3/2,0.866)circle(1pt);
% \draw[fill](3/2,-0.866)circle(1pt);
% \draw[fill](-3/2,-0.866)circle(1pt);
\end{tikzpicture}
\end{center}
\begin{center}
%%%%%%%%%%%%%%%%%%%%%%%%%%%%%%%%%%%%
%%%%%%%%%%%%%%% d=7 %%%%%%%%%%%%%%%%
%%%%%%%%%%%%%%%%%%%%%%%%%%%%%%%%%%%%
\begin{tikzpicture}[scale=1]
\draw[color=blue](0,2/2.645)--(1/2,3/2/2.645)--(1/2,-3/2/2.645)--(0,-2/2.645)--(-1/2,-3/2/2.645)--(-1/2,3/2/2.645)--(0,2/2.645);
\draw[color=blue](0+1,2/2.645)--(1/2+1,3/2/2.645)--(1/2+1,-3/2/2.645)--(0+1,-2/2.645)--(-1/2+1,-3/2/2.645)--(-1/2+1,3/2/2.645)--(0+1,2/2.645);
\draw[color=blue](0-1,2/2.645)--(1/2-1,3/2/2.645)--(1/2-1,-3/2/2.645)--(0-1,-2/2.645)--(-1/2-1,-3/2/2.645)--(-1/2-1,3/2/2.645)--(0-1,2/2.645);
\draw[color=blue](0+2,2/2.645)--(1/2+2,3/2/2.645)--(1/2+2,-3/2/2.645)--(0+2,-2/2.645)--(-1/2+2,-3/2/2.645)--(-1/2+2,3/2/2.645)--(0+2,2/2.645);
\draw[color=blue](0-2,2/2.645)--(1/2-2,3/2/2.645)--(1/2-2,-3/2/2.645)--(0-2,-2/2.645)--(-1/2-2,-3/2/2.645)--(-1/2-2,3/2/2.645)--(0-2,2/2.645);

\draw[color=blue](0-1/2,2/2.645+1.323)--(1/2-1/2,3/2/2.645+1.323)--(1/2-1/2,-3/2/2.645+1.323)--(0-1/2,-2/2.645+1.323)--(-1/2-1/2,-3/2/2.645+1.323)--(-1/2-1/2,3/2/2.645+1.323)--(0-1/2,2/2.645+1.323);
\draw[color=blue](0+1-1/2,2/2.645+1.323)--(1/2+1-1/2,3/2/2.645+1.323)--(1/2+1-1/2,-3/2/2.645+1.323)--(0+1-1/2,-2/2.645+1.323)--(-1/2+1-1/2,-3/2/2.645+1.323)--(-1/2+1-1/2,3/2/2.645+1.323)--(0+1-1/2,2/2.645+1.323);
\draw[color=blue](0-1-1/2,2/2.645+1.323)--(1/2-1-1/2,3/2/2.645+1.323)--(1/2-1-1/2,-3/2/2.645+1.323)--(0-1-1/2,-2/2.645+1.323)--(-1/2-1-1/2,-3/2/2.645+1.323)--(-1/2-1-1/2,3/2/2.645+1.323)--(0-1-1/2,2/2.645+1.323);
\draw[color=blue](0+2-1/2,2/2.645+1.323)--(1/2+2-1/2,3/2/2.645+1.323)--(1/2+2-1/2,-3/2/2.645+1.323)--(0+2-1/2,-2/2.645+1.323)--(-1/2+2-1/2,-3/2/2.645+1.323)--(-1/2+2-1/2,3/2/2.645+1.323)--(0+2-1/2,2/2.645+1.323);

\draw[color=blue](0-1/2,2/2.645-1.323)--(1/2-1/2,3/2/2.645-1.323)--(1/2-1/2,-3/2/2.645-1.323)--(0-1/2,-2/2.645-1.323)--(-1/2-1/2,-3/2/2.645-1.323)--(-1/2-1/2,3/2/2.645-1.323)--(0-1/2,2/2.645-1.323);
\draw[color=blue](0+1-1/2,2/2.645-1.323)--(1/2+1-1/2,3/2/2.645-1.323)--(1/2+1-1/2,-3/2/2.645-1.323)--(0+1-1/2,-2/2.645-1.323)--(-1/2+1-1/2,-3/2/2.645-1.323)--(-1/2+1-1/2,3/2/2.645-1.323)--(0+1-1/2,2/2.645-1.323);
\draw[color=blue](0-1-1/2,2/2.645-1.323)--(1/2-1-1/2,3/2/2.645-1.323)--(1/2-1-1/2,-3/2/2.645-1.323)--(0-1-1/2,-2/2.645-1.323)--(-1/2-1-1/2,-3/2/2.645-1.323)--(-1/2-1-1/2,3/2/2.645-1.323)--(0-1-1/2,2/2.645-1.323);
\draw[color=blue](0+2-1/2,2/2.645-1.323)--(1/2+2-1/2,3/2/2.645-1.323)--(1/2+2-1/2,-3/2/2.645-1.323)--(0+2-1/2,-2/2.645-1.323)--(-1/2+2-1/2,-3/2/2.645-1.323)--(-1/2+2-1/2,3/2/2.645-1.323)--(0+2-1/2,2/2.645-1.323);

\draw(0,0)circle(1);

\draw[color=red](7/8,-0.992)arc(-97.18:97.18:1);
\draw[color=red](-7/8,0.992)arc(180-97.18:180+97.18:1);
\draw[color=red](7/8,0.992)arc(27.89:27.89+82.82:0.707);
\draw[color=red](0,1.323)arc(180-27.89-82.82:180-27.89:0.707);
\draw[color=red](-7/8,-0.992)arc(180+27.89:180+27.89+82.82:0.707);
\draw[color=red](0,-1.323)arc(-27.89-82.82:-27.89:0.707);
%\draw[color=red](1,0)circle(1);
%\draw[color=red](-1,0)circle(1);
%\draw[color=red](.25,2.645/4)circle(1/1.414);
%\draw[color=red](.25,-2.645/4)circle(1/1.414);
%\draw[color=red](-.25,2.645/4)circle(1/1.414);
%\draw[color=red](-.25,-2.645/4)circle(1/1.414);
%\draw(0,0)circle(2.645/2);

% \draw[fill](1/2,2.645/2)circle(1pt);
% \draw[fill](-1/2,2.645/2)circle(1pt);
% \draw[fill](1/2,-2.645/2)circle(1pt);
% \draw[fill](-1/2,-2.645/2)circle(1pt);
% \draw[fill](1,0)circle(1pt);
% \draw[fill](-1,0)circle(1pt);
% \draw[fill](0,0)circle(1pt);
\end{tikzpicture}
%%%%%%%%%%%%%%%%%%%%%%%%%%%%%%%%%%%%
%%%%%%%%%%%%%% d=11 %%%%%%%%%%%%%%%%
%%%%%%%%%%%%%%%%%%%%%%%%%%%%%%%%%%%%
\begin{tikzpicture}[scale=1]
\draw[color=blue](0,3/3.316)--(1/2,5/2/3.316)--(1/2,-5/2/3.316)--(0,-3/3.316)--(-1/2,-5/2/3.316)--(-1/2,5/2/3.316)--(0,3/3.316);
\draw[color=blue](0+1,3/3.316)--(1/2+1,5/2/3.316)--(1/2+1,-5/2/3.316)--(0+1,-3/3.316)--(-1/2+1,-5/2/3.316)--(-1/2+1,5/2/3.316)--(0+1,3/3.316);
\draw[color=blue](0-1,3/3.316)--(1/2-1,5/2/3.316)--(1/2-1,-5/2/3.316)--(0-1,-3/3.316)--(-1/2-1,-5/2/3.316)--(-1/2-1,5/2/3.316)--(0-1,3/3.316);
\draw[color=blue](0+2,3/3.316)--(1/2+2,5/2/3.316)--(1/2+2,-5/2/3.316)--(0+2,-3/3.316)--(-1/2+2,-5/2/3.316)--(-1/2+2,5/2/3.316)--(0+2,3/3.316);
\draw[color=blue](0-2,3/3.316)--(1/2-2,5/2/3.316)--(1/2-2,-5/2/3.316)--(0-2,-3/3.316)--(-1/2-2,-5/2/3.316)--(-1/2-2,5/2/3.316)--(0-2,3/3.316);

\draw[color=blue](0-1/2,3/3.316+1.658)--(1/2-1/2,5/2/3.316+1.658)--(1/2-1/2,-5/2/3.316+1.658)--(0-1/2,-3/3.316+1.658)--(-1/2-1/2,-5/2/3.316+1.658)--(-1/2-1/2,5/2/3.316+1.658)--(0-1/2,3/3.316+1.658);
\draw[color=blue](0+1-1/2,3/3.316+1.658)--(1/2+1-1/2,5/2/3.316+1.658)--(1/2+1-1/2,-5/2/3.316+1.658)--(0+1-1/2,-3/3.316+1.658)--(-1/2+1-1/2,-5/2/3.316+1.658)--(-1/2+1-1/2,5/2/3.316+1.658)--(0+1-1/2,3/3.316+1.658);
\draw[color=blue](0-1-1/2,3/3.316+1.658)--(1/2-1-1/2,5/2/3.316+1.658)--(1/2-1-1/2,-5/2/3.316+1.658)--(0-1-1/2,-3/3.316+1.658)--(-1/2-1-1/2,-5/2/3.316+1.658)--(-1/2-1-1/2,5/2/3.316+1.658)--(0-1-1/2,3/3.316+1.658);
\draw[color=blue](0+2-1/2,3/3.316+1.658)--(1/2+2-1/2,5/2/3.316+1.658)--(1/2+2-1/2,-5/2/3.316+1.658)--(0+2-1/2,-3/3.316+1.658)--(-1/2+2-1/2,-5/2/3.316+1.658)--(-1/2+2-1/2,5/2/3.316+1.658)--(0+2-1/2,3/3.316+1.658);

\draw[color=blue](0-1/2,3/3.316-1.658)--(1/2-1/2,5/2/3.316-1.658)--(1/2-1/2,-5/2/3.316-1.658)--(0-1/2,-3/3.316-1.658)--(-1/2-1/2,-5/2/3.316-1.658)--(-1/2-1/2,5/2/3.316-1.658)--(0-1/2,3/3.316-1.658);
\draw[color=blue](0+1-1/2,3/3.316-1.658)--(1/2+1-1/2,5/2/3.316-1.658)--(1/2+1-1/2,-5/2/3.316-1.658)--(0+1-1/2,-3/3.316-1.658)--(-1/2+1-1/2,-5/2/3.316-1.658)--(-1/2+1-1/2,5/2/3.316-1.658)--(0+1-1/2,3/3.316-1.658);
\draw[color=blue](0-1-1/2,3/3.316-1.658)--(1/2-1-1/2,5/2/3.316-1.658)--(1/2-1-1/2,-5/2/3.316-1.658)--(0-1-1/2,-3/3.316-1.658)--(-1/2-1-1/2,-5/2/3.316-1.658)--(-1/2-1-1/2,5/2/3.316-1.658)--(0-1-1/2,3/3.316-1.658);
\draw[color=blue](0+2-1/2,3/3.316-1.658)--(1/2+2-1/2,5/2/3.316-1.658)--(1/2+2-1/2,-5/2/3.316-1.658)--(0+2-1/2,-3/3.316-1.658)--(-1/2+2-1/2,-5/2/3.316-1.658)--(-1/2+2-1/2,5/2/3.316-1.658)--(0+2-1/2,3/3.316-1.658);

\draw(0,0)circle(1);
%\draw(0,0)circle(3.3166/3);

\draw[color=red](11/18,-0.9213)arc(-112.9:112.9:1);
\draw[color=red](-11/18,0.9213)arc(180-112.9:180+112.9:1);
\draw[color=red](11/18,0.9213)arc(39.66:39.66+67.12:0.577);
\draw[color=red](-11/18,0.9213)arc(180-39.66:180-39.66-67.12:0.577);

\draw[color=red](11/18,-0.9213)arc(-39.66:-39.66-67.12:0.577);
\draw[color=red](-11/18,-0.9213)arc(180+39.66:180+39.66+67.12:0.577);
%\draw[color=red](1,0)circle(1);
%\draw[color=red](-1,0)circle(1);
%\draw[color=red](1/6,3.3166/6)circle(1/1.732);
%\draw[color=red](1/6,-3.3166/6)circle(1/1.732);
%\draw[color=red](-1/6,3.3166/6)circle(1/1.732);
%\draw[color=red](-1/6,-3.3166/6)circle(1/1.732);

% \draw[fill](1/2,3.316/2)circle(1pt);
% \draw[fill](-1/2,3.316/2)circle(1pt);
% \draw[fill](1/2,-3.316/2)circle(1pt);
% \draw[fill](-1/2,-3.316/2)circle(1pt);
% \draw[fill](1,0)circle(1pt);
% \draw[fill](-1,0)circle(1pt);
% \draw[fill](0,0)circle(1pt);
\end{tikzpicture}
\caption{$\partial V$ and translates (blue), $\partial(V^{-1})$ (red), and unit circle (black) for $d=1,2,3,7,11$.}
\label{fig:basediag}
\end{center}
\end{figure}
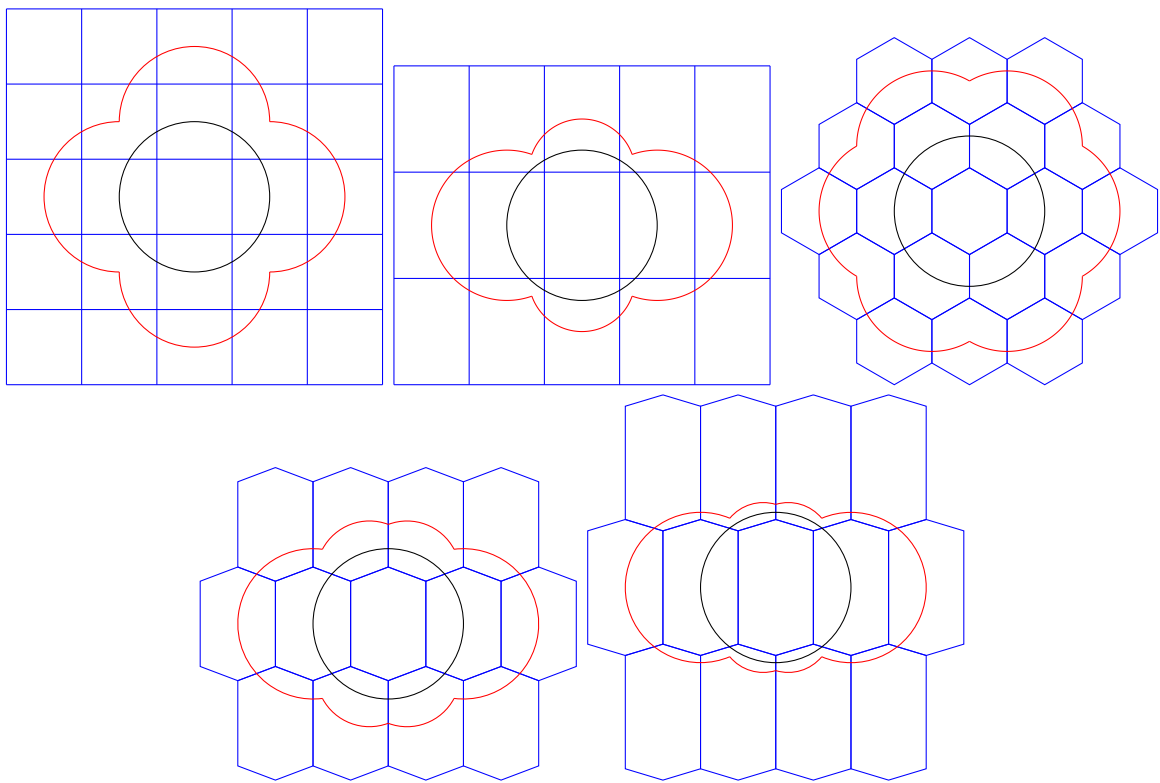

\begin{figure}[hbt]
\begin{center}
\includegraphics[scale=0.5]{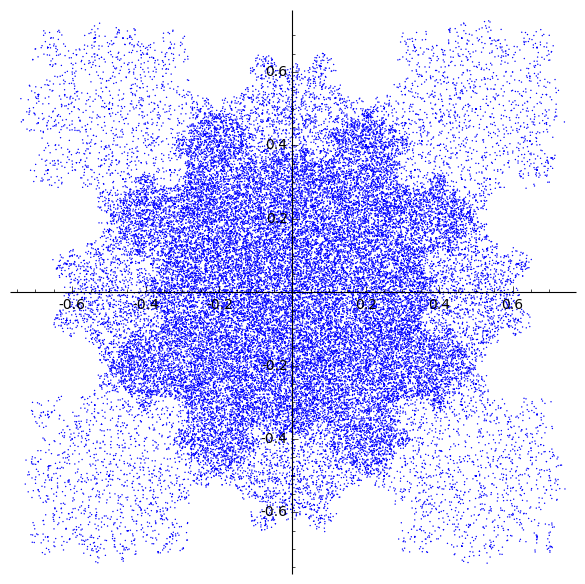}
\includegraphics[scale=0.5]{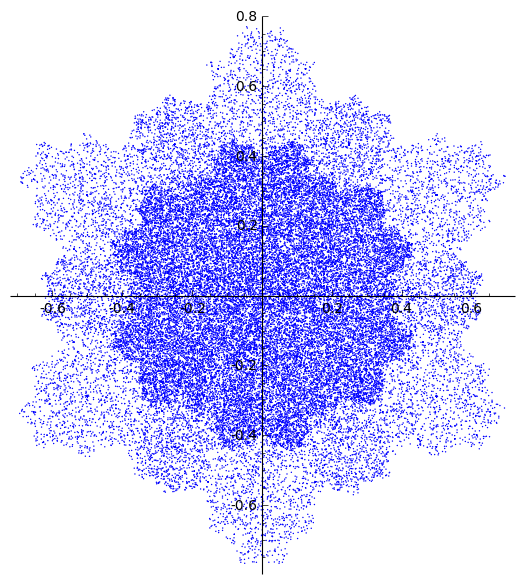}
\caption{The numbers $q_{n-1}/q_n$, $1\leq n\leq10$, for 5000 randomly chosen $z$ over $\mbb{Q}(\sqrt{-1})$ and $\mbb{Q}(\sqrt{-3})$.}
\label{fig:qratio}
\end{center}
\end{figure}

Monotonicity of the denominators $q_n$ was shown by Hurwitz \cite{Hur1} for $d=1,3$, Lunz \cite{Lu} for $d=2$, and stated without proof in \cite{La} for $d=1,2,3,7,11$.  As this is a desirable property to establish, we outline the proof for the cases $d=7,11$ in an appendix.  The proofs are unenlightening and follow the outline for the simpler cases $d=1,3$ in \cite{Hur1}.

\begin{prop}\label{prop:mono}
For any $z\in\mbb{C}$, the denominators of the convergents $p_n/q_n$ are strictly increasing in absolute value, $|q_{n-1}|<|q_n|$.
\end{prop}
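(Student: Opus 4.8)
The plan is to pass from the denominators to the ratios $t_n := q_{n-1}/q_n$ and to trap these ratios inside a fixed region of the open unit disk. From the matrix product defining $p_n,q_n$ one has $q_n = a_nq_{n-1}+q_{n-2}$ with $q_{-1}=0$, $q_0=1$ (equivalently the identity $q_n/q_{n-1}=a_n+q_{n-2}/q_{n-1}$ recorded above), so, whenever $q_{n-1},q_n\neq0$, one gets $t_n = 1/(a_n+t_{n-1})$ with $t_0=0$, and the assertion $|q_{n-1}|<|q_n|$ is exactly $|t_n|<1$ for $n\geq1$. The case $n=1$ is immediate, since $t_1=1/a_1$ and $|a_1|\geq 1/\rho_d>1$.

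Next I would produce, separately for $d=7$ and $d=11$ (the cases $d=1,3$ being in \cite{Hur1} and $d=2$ in \cite{Lu}), a compact region $R=R_d\subset\mbb{C}$, bounded by finitely many circular arcs and guided by the experimental picture in Figure \ref{fig:qratio}, with three properties: (i) $R$ lies in the open unit disk; (ii) $1/a\in R$ for every $a$ that can occur as a partial quotient, i.e. every $a=\floor{1/w}$ with $w\in V_d\setminus\{0\}$, equivalently every $a$ with $(a+V_d)\cap V_d^{-1}\neq\emptyset$, all of which satisfy $|a|\geq1/\rho_d$; and (iii) $1/(a+R)\subseteq R$ for every such $a$ (note (i) forces $|a+t|\geq|a|-|t|>0$ on $R$, so these M\"obius maps are defined on all of $R$). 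Granting such an $R$, a single induction on $n$ yields simultaneously $q_n\neq0$ and $t_n\in R$ for all $n\geq1$: the base case $n=1$ is (ii), and if $t_n\in R$ then $a_{n+1}+t_n\neq0$, so $q_{n+1}=q_n(a_{n+1}+t_n)\neq0$ and $t_{n+1}=1/(a_{n+1}+t_n)\in R$ by (iii); since $R\subseteq\{|w|<1\}$ this gives $|t_n|<1$, i.e. $|q_{n-1}|<|q_n|$.

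Everything thus reduces to verifying (i)--(iii) for $d=7,11$, and this is where the lengthy but routine casework lives. Only finitely many $a$ need genuine attention: for $|a|\geq2$ one has $|a+t|>1$ on $R$, so $1/(a+R)$ lands in the disk of radius $1/(|a|-1)\leq1$ about $0$, which can be arranged to lie in $R$; the remaining $a$ with $1/\rho_d\leq|a|<2$ are enumerated explicitly (there are more of these, with more neighbouring translates of $V_d$ to track, than in Hurwitz's cases, because $\rho_7$ and especially $\rho_{11}$ are closer to $1$). For each such $a$ the map $w\mapsto 1/(a+w)$ carries the arcs bounding $R$ to arcs, so $1/(a+R)\subseteq R$ becomes finitely many inequalities among the explicit (if irrational) arc endpoints. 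The hard part will be pinning down $R$ itself: it must be small enough to fit inside the unit disk yet large enough to be forward-invariant, which forces its boundary arcs to be precisely images of one another under the extremal maps $w\mapsto 1/(a+w)$; teasing out this self-reproducing boundary, and confirming it survives \emph{every} admissible $a$, is the unenlightening bookkeeping referred to above, carried out for $d=7,11$ following the template of \cite{Hur1}.
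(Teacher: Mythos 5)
Your reduction to a single region $R=R_d$ satisfying (i)--(iii) cannot be carried out: for $d=7$ and $d=11$ no such region exists, so the casework you defer to is not merely tedious but impossible. Conditions (ii) and (iii) force $R$ to contain the orbit of $0$ under \emph{every} finite word in the maps $w\mapsto 1/(a+w)$ with $a$ individually admissible, whereas the true constraint on the ratios $t_n$ is a Markov one: which $a_{n+1}$ may follow $a_n$ is restricted (the ``forbidden sequences'' of the appendix), and it is precisely the words that never occur whose M\"obius compositions escape the open unit disk. Concretely, for $d=7$ put $\omega=\frac{1+\sqrt{-7}}{2}$, an admissible partial quotient of norm $2$. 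By (ii), $1/\omega\in R$; by (iii) applied with $a=\omega$,
$$
\frac{1}{\omega+1/\omega}=\frac{\omega}{\omega^2+1}=\frac{\omega}{\omega-1}\in R,
$$
and since $|\omega|^2=|\omega-1|^2=2$ this point has modulus exactly $1$, contradicting (i). (The word $(\omega,\omega)$ is one of the forbidden two-term sequences for $d=7$.) The same failure occurs for $d=11$ with $\omega=\frac{1+\sqrt{-11}}{2}$ and the word $(\omega-1,\omega-1,\omega)$: the corresponding composition applied to $0$ is $10/(1+3\sqrt{-11})$, again of modulus exactly $1$.

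The missing ingredient is therefore the correlation between \emph{consecutive} partial quotients. A forward version of your argument could only be salvaged by replacing the single $R$ with a family of regions indexed by the last one or several partial quotients, with $1/(a'+R_a)\subseteq R_{a'}$ required only for allowed transitions $a\to a'$ -- and determining which transitions (and longer blocks) are allowed is the real content of the proof. The paper instead runs the argument backwards: assuming $n$ minimal with $|q_n|\leq|q_{n-1}|$, it deduces $|a_n|<2$, which pins $a_n$ down to the four integers of norm $2$ (resp.\ $3$), and then uses the forbidden sequences to trap $k_i=q_i/q_{i-1}$ for all $i<n$ in an explicit union of lens-shaped regions containing no element of $\mc{O}$, contradicting $k_1=a_1\in\mc{O}$. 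Your setup of $t_n=1/(a_n+t_{n-1})$ and the treatment of $|a|\geq2$ are fine; it is the uniform single-step invariance that must be abandoned.
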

\begin{proof}
See the appendix.
\end{proof}

To conclude this section, we record the following lemma, which is used in the proof of Theorem \ref{thm:bndpartquots}, applied to the inverse of $g_n=\left(
\begin{array}{cc}
p_n&p_{n-1}\\
q_n&q_{n-1}\\
\end{array}
\right)$,
for which $g_n(\infty)=p_n/q_n$ and $g_n^{-1}(\infty)=-q_{n-1}/q_n$ as depicted in Figure \ref{fig:diag}.
\begin{lemma}\label{lemma:normalize}
Let $w=g(z)=\frac{az+b}{cz+d}$ with $a,b,c,d\in\mc{O}$, $|ad-bc|=1$, and $g(p/q)=\infty$ (i.e. $p/q=-d/c$).  Then the disk $D=\{z\in\mbb{C} : |z-p/q|<C/|q|^2\}$ gets mapped via $g$ to the region $g(D)=\{w\in\mbb{C} : |w-a/c|>1/C\}$, the exterior of the disk of radius $1/C$ centered at $g(\infty)$.
\end{lemma}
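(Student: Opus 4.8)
The plan is to compute the image of the disk $D$ directly, exploiting the fact that a Möbius transformation with unit-modulus determinant is an isometry for a suitably normalized metric, or more elementarily, by writing $g$ as a composition of simple maps and tracking the boundary circle. Since $g(p/q) = \infty$, the map $g$ sends the boundary circle $\partial D = \{|z - p/q| = C/|q|^2\}$ (a circle through no special point except that it surrounds the pole $p/q$) to a circle in the $w$-plane (it cannot be a line, since the pole $p/q$ of $g$ lies strictly inside $D$, not on $\partial D$; hence $\infty \notin g(\partial D)$). Because $p/q$ is interior to $D$, the image $g(D)$ is the component of $\mathbb{C} \setminus g(\partial D)$ containing $g(\infty)$ is \emph{not} the case — rather $g(D)$ is the \emph{unbounded} component, i.e. the exterior of the image circle, since $\infty = g(p/q) \in g(D)$. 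So I must identify the center and radius of $g(\partial D)$.

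The cleanest way to pin down the image circle is to use two facts: first, that $g(\infty) = a/c$ will turn out to be the center of the image disk; and second, to locate one boundary point. For the center: the point of $\partial D$ "antipodal through the pole" is irrelevant, but the symmetry point of $p/q$ with respect to $\partial D$ is $\infty$, and Möbius maps preserve the symmetric-point relation, so the image of $\infty$ (which is the symmetric point) and the image of $p/q$ (which is $\infty$) are symmetric with respect to $g(\partial D)$; a finite point symmetric to $\infty$ with respect to a circle is exactly that circle's center. Hence the center of $g(\partial D)$ is $g(\infty) = a/c$, as claimed. For the radius, I would write $g(z) - a/c = \frac{az+b}{cz+d} - \frac{a}{c} = \frac{-(ad-bc)}{c(cz+d)} = \frac{-(ad-bc)}{c^2(z - p/q)}$ using $cz + d = c(z - p/q)$ since $p/q = -d/c$. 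Taking absolute values on $z \in \partial D$, where $|z - p/q| = C/|q|^2$ and $|ad-bc| = 1$, gives $|g(z) - a/c| = \frac{1}{|c|^2 \cdot C/|q|^2} = \frac{|q|^2}{|c|^2 C}$. It remains to observe $|c| = |q|$: this is immediate from $p/q = -d/c$ together with $\gcd$-type normalization, but more robustly, the stated hypothesis is applied with $g = g_n^{-1}$, for which $c$ is (up to sign) $q_n$ and $q = -q_{n-1}/q_n$ has denominator $q_n$; in the abstract statement one simply takes $p/q$ in lowest terms so that $|c| = |q|$. Then $|g(z) - a/c| = 1/C$ exactly on $\partial D$, so $g(\partial D) = \{|w - a/c| = 1/C\}$.

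Finally, to conclude $g(D)$ is the \emph{exterior} $\{|w - a/c| > 1/C\}$ rather than the interior: the interior of $D$ contains the pole $p/q$, which maps to $\infty$, and $\infty$ lies in the exterior of any bounded disk; since $g$ is a homeomorphism of the Riemann sphere and $g(\partial D)$ is the common boundary of the two complementary disks $g(D)$ and $g(\mathbb{C}\setminus \overline{D})$, the one containing $\infty$ must be $g(D)$. This gives $g(D) = \{w : |w - a/c| > 1/C\}$, as desired. I expect no genuine obstacle here; the only point requiring a little care is the normalization $|c| = |q|$ (equivalently, that $p/q$ is written in lowest terms, which holds automatically when $g$ arises from a matrix in $GL_2(\mathcal{O})$ of determinant of absolute value $1$), and the orientation argument identifying which complementary region is the image.

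\begin{proof}
Write $p/q = -d/c$ in lowest terms, so $|c| = |q|$, and note $cz + d = c(z - p/q)$. Then for any $z \neq p/q$,
\[
g(z) - \frac{a}{c} = \frac{az+b}{cz+d} - \frac{a}{c} = \frac{c(az+b) - a(cz+d)}{c(cz+d)} = \frac{bc - ad}{c(cz+d)} = \frac{-(ad-bc)}{c^2(z - p/q)}.
\]
Taking absolute values and using $|ad - bc| = 1$,
\[
\left| g(z) - \frac{a}{c} \right| = \frac{1}{|c|^2 \, |z - p/q|} = \frac{|q|^2}{|q|^2 \, \cdot\, |z-p/q| \, \cdot\, (|c|^2/|q|^2)} = \frac{1}{|q|^2 |z - p/q|} \cdot |q|^2 / |c|^2,
\]
and since $|c| = |q|$ this is simply $\dfrac{1}{|q|^2 |z - p/q|}$. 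Hence $z \in D$, i.e. $|z - p/q| < C/|q|^2$, holds if and only if
\[
\left| g(z) - \frac{a}{c} \right| = \frac{1}{|q|^2 |z-p/q|} > \frac{1}{|q|^2 \cdot C/|q|^2} = \frac{1}{C}.
\]
Thus $g(D) = \{ w \in \mathbb{C} : |w - a/c| > 1/C \}$, the exterior of the disk of radius $1/C$ centered at $g(\infty) = a/c$ (the boundary case $z = p/q$ maps to $\infty$, which lies in this exterior region). This is the claimed identity.
\end{proof}
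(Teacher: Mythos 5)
Your proof is correct and is essentially the same computation as the paper's: both expand $g(z)-a/c=\frac{-(ad-bc)}{c^2(z-p/q)}$, take absolute values using $|ad-bc|=1$ and $|c|=|q|$, and read off the equivalence $|z-p/q|<C/|q|^2 \Longleftrightarrow |w-a/c|>1/C$. Your explicit remark that $p/q$ must be in lowest terms so that $|c|=|q|$ is a point the paper leaves implicit, but otherwise the arguments coincide.
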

\begin{proof}
We have
$$
w-a/c=\frac{az+b}{cz+d}-\frac{a}{c}=\frac{-\det(g)}{c^2(z+d/c)}, \ |w-a/c|=\frac{1}{|c|^2|z+d/c|}=\frac{1}{|q|^2|z-p/q|},
$$
so that
$$
|w-a/c|>1/C\Longleftrightarrow\frac{1}{|q|^2|z-p/q|}>1/C\Longleftrightarrow |z-p/q|<C/|q|^2.
$$
\end{proof}

Some references for nearest integer continued fractions include: \cite{Hur1} (some generalities and $d=1,3$), \cite{P} $\S46$ ($d=1,2,3$), \cite{La} ($d=1,2,3,7,11$), \cite{Hen} Chapter 5 ($d=1$), and \cite{Da1} (a general approach including some properties of the cases we consider).
%%%%%%%%%%%%%%%%%%%%%%%%%%%%%%%%%%%%%%%%%%%%%%%%%%%%%%%%%%%%%%%%%%%%%%%%%%%%%%%%%%%%%%%%%%%%%%
%%%%%%%%%%%%%%%%%%%%%%%%%%%%%%%%%%%%%%%%%%%%%%%%%%%%%%%%%%%%%%%%%%%%%%%%%%%%%%%%%%%%%%%%%%%%%%
%%%%%%%%%%%%%%%%%%%%%%%%%%%%%%%%%%%%%%%%%%%%%%%%%%%%%%%%%%%%%%%%%%%%%%%%%%%%%%%%%%%%%%%%%%%%%%
%%%%%%%%%%%%%%%%%%%%%%%%%%%%%%%%%%%%%%%%%%%%%%%%%%%%%%%%%%%%%%%%%%%%%%%%%%%%%%%%%%%%%%%%%%%%%%
%%%%%%%%%%%%%%%%%%%%%%%%%%%%%%%%%%%%%%%%%%%%%%%%%%%%%%%%%%%%%%%%%%%%%%%%%%%%%%%%%%%%%%%%%%%%%%
\section*{Badly approximable numbers over the Euclidean imaginary quadratic fields via nearest integer continued fractions}
For each of the Euclidean imaginary quadratic fields $K$ there is a constant $C>0$ such that for any $z\in\mbb{C}$ there are infinitely many solutions $p/q\in K$, $(p,q)=1$ to
\begin{align}
\label{eqn:approx}
|z-p/q|\leq C/|q|^2,\tag{\dag}
\end{align}
by a pigeonhole argument for instance (cf. \cite{EGM} Chapter 7, Proposition 2.6).  The smallest such $C$ are $1/\sqrt{3}$, $1/\sqrt{2}$, $1/\sqrt[4]{13}$, $1/\sqrt[4]{8}$, and $2/\sqrt{5}$ for $d=1,2,3,7,11$ respectively (for references, see the Introduction to \cite{Vu1}).  We can obtain rational approximations with a specific $C$ satisfying inequality \eqref{eqn:approx} using the nearest integer algorithms described above.  The best constants coming from the nearest integer convergents, $\sup_{z,n}\{|q_n|^2|z-p_n/q_n|\}$, can be found in Theorem 1 of \cite{La}.
\begin{prop}\label{prop:approx_const}
For $z\in\mbb{C}\setminus K$, the convergents $p_n/q_n$ satisfy
$$
|z-p_n/q_n|\leq\frac{1}{(1/\rho-1)|q_n|^2},
$$
i.e. we can take $p/q=p_n/q_n$ and $C=\frac{\rho}{1-\rho}$ in the inequality \eqref{eqn:approx}.
\end{prop}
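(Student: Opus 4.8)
The plan is to read the bound directly off the algebraic identity
$$
z-\frac{p_n}{q_n}=\frac{(-1)^n}{q_n^2\bigl(z_n^{-1}+q_{n-1}/q_n\bigr)}
$$
recorded above. Taking absolute values, the claimed estimate is equivalent to the single lower bound $\bigl|z_n^{-1}+q_{n-1}/q_n\bigr|\geq 1/\rho-1$, so everything reduces to controlling the two summands $z_n^{-1}$ and $q_{n-1}/q_n$ separately.

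First I would note that since $z\notin K$ the continued fraction is infinite and every remainder $z_n=T^n(z_0)$ is a \emph{nonzero} element of $V$; this is the only place the hypothesis $z\notin K$ enters. By definition $\rho=\rho_d$ is the radius of the smallest circle about $0$ containing $V_d$, so $|z_n|\leq\rho$ and hence $|z_n^{-1}|\geq 1/\rho$. The structural point that makes the statement meaningful is that for the Euclidean discriminants — and only for these — one has $\rho<1$ (equivalently, $V_d$ lies in the open unit disk), so that $1/\rho-1>0$ and the asserted constant $\rho/(1-\rho)=1/(1/\rho-1)$ is positive and finite.

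Next I would invoke Proposition~\ref{prop:mono}: the denominators satisfy $|q_{n-1}|<|q_n|$ (with the convention $q_{-1}=0$ this also handles $n=0$), so $|q_{n-1}/q_n|<1$. The triangle inequality then gives
$$
\bigl|z_n^{-1}+q_{n-1}/q_n\bigr|\geq |z_n^{-1}|-|q_{n-1}/q_n|>\frac{1}{\rho}-1,
$$
and substituting back into the identity yields $|z-p_n/q_n|<\dfrac{1}{(1/\rho-1)|q_n|^2}$, which is the claim after the elementary rewriting $1/(1/\rho-1)=\rho/(1-\rho)$.

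I do not expect any real obstacle here: the argument is a one-line triangle-inequality estimate once the identity and Proposition~\ref{prop:mono} are available. The only subtleties worth flagging in the writeup are that $z_n\neq 0$ (which is exactly where $z\notin K$ is used) and that the conclusion is only meaningful because $\rho<1$ for the Euclidean fields. One could alternatively route the computation through Lemma~\ref{lemma:normalize} applied to $g_n^{-1}$, but the direct approach via the identity is the shortest.
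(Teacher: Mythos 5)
Your argument is correct and is essentially the paper's own proof: both rest on the identity $z-p_n/q_n=(-1)^n/\bigl(q_n^2(z_n^{-1}+q_{n-1}/q_n)\bigr)$ together with the bounds $|z_n^{-1}|\geq 1/\rho$ (from $z_n\in V\subseteq\{|w|\leq\rho\}$) and $|q_{n-1}/q_n|\leq 1$ (from Proposition~\ref{prop:mono}), followed by the triangle inequality. Your additional remarks — that $z\notin K$ is needed so that $z_n\neq 0$, and that $\rho<1$ is what makes the constant positive — are accurate and worth keeping.
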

\begin{proof}
Using simple properties of the algorithm and the bounds $1/z_n\in V^{-1}$, $|q_{n-1}/q_n|\leq1$, we have
$$
|z-p_n/q_n|=\frac{1}{|q_n|^2|z_n^{-1}+q_{n-1}/q_n|}\leq\frac{1}{|q_n|^2(1/\rho-1)}.
$$
% By Lemma \ref{lemma:normalize}, the disk $D_n=\{w : |w-p_n/q_n|<C/|q_n|^2\}$ gets mapped to the exterior of the disk of radius $1/C$ centered at $g_n^{-1}(\infty)=-q_{n-1}/q_n\in V$ by $g_n^{-1}$ where $g_n$ is the matrix corresponding to the $n$th approximation $p_n/q_n$,
% $$
% g_n=\left(
% \begin{array}{cc}
% a_0&1\\
% 1&0\\
% \end{array}
% \right)\ldots
% \left(
% \begin{array}{cc}
% a_n&1\\
% 1&0\\
% \end{array}
% \right)=
% \left(
% \begin{array}{cc}
% p_n&p_{n-1}\\
% q_n&q_{n-1}\\
% \end{array}
% \right).
% $$
% If $z\in D_n$, then $g_n^{-1}(z)$ must lie in $V^{-1}$ (otherwise $z\neq[a_0;a_1,\ldots,a_n,\ldots]$ as $T^n(z_0)$ is not defined).  A value of $C$ which guarantees $g_n^{-1}(z)\in V^{-1}$ is $C=1/\rho-\rho$, corresponding the smallest potential distance between $g_n^{-1}(\infty)\in V$ and $g_n^{-1}(z)\in V^{-1}$, i.e. $V^{-1}\subseteq g_n^{-1}(D_n)$.
\end{proof}

We say $z$ is \textit{badly approximable} over $K$ if there is a $C'>0$ such that
$$
|z-p/q|\geq C'/|q|^2
$$
for all $p/q\in K$, i.e. $z$ is badly approximable if the exponent of two on $|q|$ is the best possible in the inequality \eqref{eqn:approx}.  We will show that the badly approximable numbers are characterized by the boundedness of the partial quotients in the nearest integer continued fraction expansion, analogous to the well-known fact for simple continued fractions over the real numbers.  First a lemma showing that the nearest integer convergents compare well with the best rational approximations.
\begin{lemma}\label{lemma:multconst}
There are effective constants $\alpha=\alpha_d>0$ such that for any irrational $z$ with convergents $p_n/q_n$ and rational $p/q$ with $|q_{n-1}|<|q|\leq|q_n|$ we have
$$
|q_nz-p_n|\leq\alpha|qz-p|.
$$
\end{lemma}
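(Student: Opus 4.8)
The plan is to compare the "error vectors" $q_n z - p_n$ and $qz - p$ directly, using the product formula $q_n z - p_n = (-1)^n z_0 \cdots z_n$ together with monotonicity of the denominators (Proposition \ref{prop:mono}) and the uniform lower bound $|a_n| \geq 1/\rho_d$ for $n \geq 1$. Fix $n$ and let $p/q \in K$ with $|q_{n-1}| < |q| \leq |q_n|$. The key structural fact I would establish first is the standard "best approximation in the denominator range" estimate: writing $(p, q)$ in the basis $\{(p_n, q_n), (p_{n-1}, q_{n-1})\}$ of the $\mathcal{O}$-module of integer vectors (possible since $p_n q_{n-1} - p_{n-1} q_n = \pm 1$), say $(p, q) = \lambda (p_n, q_n) + \mu (p_{n-1}, q_{n-1})$ with $\lambda, \mu \in \mathcal{O}$, we get $qz - p = \lambda (q_n z - p_n) + \mu (q_{n-1} z - p_{n-1})$. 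Since $|q| > |q_{n-1}|$ we cannot have $\lambda = 0$, so $|\lambda| \geq 1$; this is where the Euclidean hypothesis (and the fact that $\mathcal{O}$ is discrete with minimal nonzero norm $\geq 1$) is used.

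The second step is to control $|q_{n-1} z - p_{n-1}|$ in terms of $|q_n z - p_n|$: from the product formula, $|q_{n-1} z - p_{n-1}| / |q_n z - p_n| = 1/|z_n|$, and since $z_n \in V_d$ while $a_{n+1} = \floor{1/z_n}$ satisfies $|a_{n+1}| \geq 1/\rho_d$, one does \emph{not} get a bound on $|z_n|$ from below for free — so instead I would argue on the side of $|qz - p|$. Write $qz - p = \lambda(q_n z - p_n)\big(1 + (\mu/\lambda)(q_{n-1}z - p_{n-1})/(q_n z - p_n)\big)$. If $|\mu/\lambda| \cdot |q_{n-1}z-p_{n-1}|/|q_n z - p_n|$ is bounded away from $1$, we are done with $\alpha$ coming from that bound plus $|\lambda| \geq 1$. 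The problematic case is when this ratio is close to $1$, i.e. when $qz-p$ could in principle be much smaller than $q_n z - p_n$. To handle it, I would invoke Lakein's estimates \cite{La} on the quality of the nearest integer convergents, which give a two-sided control $c_d |q_n|^2 |z - p_n/q_n| \leq$ something, equivalently a lower bound $|q_n z - p_n| \geq c_d |z_0 \cdots z_{n-1}|/|q_n|$ of the correct order; combined with $|q_n z - p_n| = |z_0 \cdots z_n|$ this pins down $|z_n|$ from below by an absolute constant depending only on $d$, which is exactly what kills the bad case.

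The cleanest route, and the one I would actually write, is therefore: (i) reduce to the lattice identity above and $|\lambda| \geq 1$; (ii) quote from \cite{La} (or reprove in the five Euclidean cases by the geometry of $V_d^{-1}$ depicted in Figure \ref{fig:basediag}) that $|z_n|$ is bounded below by an effective constant $\beta_d > 0$ for all $n \geq 1$ along any orbit — equivalently that consecutive error vectors satisfy $|q_{n-1}z - p_{n-1}| \leq \beta_d^{-1} |q_n z - p_n|$; (iii) then $|qz - p| \geq |\lambda|\,|q_n z - p_n| - |\mu|\,|q_{n-1}z - p_{n-1}|$ is not quite enough because $\mu$ can be large, so instead bound $|q_n z - p_n|$ directly: from $q_n z - p_n = \lambda^{-1}(qz - p) - (\mu/\lambda)(q_{n-1}z - p_{n-1})$ and $|q_{n-1}z-p_{n-1}| < (\beta_d^{-1})|q_n z - p_n|$ with $|q_{n-1}| < |q|$, absorb the second term, or alternatively use that among all integer vectors with denominator norm in $(|q_{n-1}|, |q_n|]$ the vector $(p_n, q_n)$ minimizes $|qz - p|$ up to the factor $\beta_d^{-1}$, which is a finite geometric check. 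Setting $\alpha_d = \beta_d^{-1}$ (or a small multiple thereof) completes the proof.

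The main obstacle is step (ii): getting an \emph{effective} lower bound on $|z_n|$ (equivalently an upper bound on $|1/z_n - a_{n+1}| = |z_{n+1}|$ that does not degenerate), since a priori $z_n$ can be arbitrarily close to $0$ for a single step. The resolution is that this cannot happen for the nearest-integer algorithm on the Euclidean fields — $V_d^{-1}$ stays a bounded distance outside the unit circle in the relevant directions — and this is precisely the content of Lakein's analysis \cite{La} that the statement of the lemma says we are relying on. So the honest write-up will cite \cite{La} for the uniform bound $\inf_n |z_n| \geq \beta_d > 0$ and then the rest is the elementary linear algebra and triangle-inequality manipulation sketched above.
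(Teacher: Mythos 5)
Your step (i) --- writing $(p,q)=\lambda(p_n,q_n)+\mu(p_{n-1},q_{n-1})$ with $\lambda,\mu\in\mc{O}$ and ruling out $\lambda=0$ --- is exactly the paper's opening move (with $s=\lambda$, $t=\mu$). But everything after that rests on step (ii), the claim that $|z_n|\geq\beta_d>0$ uniformly along \emph{every} orbit, and that claim is false. If $\inf_n|z_n|$ were bounded below by a constant depending only on $d$, then the product formula $|q_nz-p_n|=|z_0\cdots z_n|$ would make every complex number badly approximable with a universal constant --- absurd, and it would render Theorem \ref{thm:bndpartquots} (which this lemma exists to prove, and which characterizes precisely those $z$ for which the $|z_n|$ \emph{are} bounded below) vacuous. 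You have misread what Lakein supplies: his Theorem 1 computes $\sup_{z,n}|q_n|\,|q_nz-p_n|<\infty$, i.e.\ an \emph{upper} bound on the normalized error of the convergents. That comes from $1/z_n\in V^{-1}$, which bounds $|1/z_n|$ from below (so $|z_n|\leq\rho$ from above); the region $V_d^{-1}$ is unbounded, and $1/z_n$ is huge exactly when the next partial quotient is huge, so there is no uniform lower bound on $|z_n|$ to quote or to reprove geometrically. Your own earlier remark that ``one does not get a bound on $|z_n|$ from below for free'' was the correct instinct; the proposed resolution contradicts it.

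The paper closes the gap without any lower bound on $|z_n|$, by splitting on $|t|=|\mu|=|pq_n-p_nq|$, the coefficient of the \emph{older} convergent. When $|t|>1$ (hence $|t|\geq\sqrt{2}$ or $\sqrt{3}$ in these rings), the reverse triangle inequality $|z-p/q|\geq\bigl||t/(qq_n)|-|z-p_n/q_n|\bigr|$ combined with Lakein's \emph{upper} bound on $|q_n|^2|z-p_n/q_n|$ shows that $\delta:=|t|/(|q_n|^2|z-p_n/q_n|)$ exceeds $1$ by a definite margin, which yields an effective $\alpha$. When $|t|=1$, one computes $|q_nz-p_n|/|qz-p|=1/|s-t/z_n|$ exactly; if this exceeded $2$ then $s/t$ would be the nearest integer to $1/z_n$, i.e.\ $s/t=a_{n+1}$, forcing $|q_{n+1}/q_n|=|q/q_n|\leq1$ and contradicting the monotonicity of denominators (Proposition \ref{prop:mono}). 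An argument of this second kind is what your ``bad case'' needs; as written, the proposal does not close.
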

\begin{proof}
Write $p/q$ in terms of the convergents $p_n/q_n$, $p_{n-1}/q_{n-1}$ for some $s,t\in\mc{O}$
\begin{align*}
\left(
\begin{array}{c}
p\\
q\\
\end{array}
\right)
=
\left(
\begin{array}{cc}
p_n&p_{n-1}\\
q_n&q_{n-1}\\
\end{array}
\right)
\left(
\begin{array}{c}
s\\
t\\
\end{array}
\right)=
\left(
\begin{array}{c}
p_ns+p_{n-1}t\\
q_ns+q_{n-1}t\\
\end{array}
\right)
.
\end{align*}
If $s=0$, then $p/q=p_{n-1}/q_{n-1}$, impossible by the assumption $|q_{n-1}|<|q|$.  If $t=0$, then $p/q=p_n/q_n$ and the result is clear with $\alpha=1$.  We may therefore assume $|s|,|t|\geq1$.  We have
$$
\left|z-\frac{p}{q}\right|\geq\left|\left|\frac{p_n}{q_n}-\frac{p}{q}\right|-\left|z-\frac{p_n}{q_n}\right|\right|=
\left|\left|\frac{t}{qq_n}\right|-\left|z-\frac{p_n}{q_n}\right|\right|,
$$
noting that $t=(-1)^n(pq_n-p_nq)$ by inverting the matrix relating $p$, $q$, $s$, and $t$.

Define $\delta$ by $|t|=\delta|q_n|^2|z-p_n/q_n|$, so that
$$
\frac{1}{|\delta-|q/q_{n}||}|qz-p|\geq|q_nz-p_n|.
$$
If $\delta>1$, then we have our $\alpha$.  A lower bound for $\delta$ is
$$
\delta=\frac{|t|}{|q_n|^2|z-p_n/q_n|}\geq|t|\inf_{z,n}\{(|q_n||q_nz-p_n|)^{-1}\}.
$$
The infimum above is calculated in \cite{La}, Theorem 1, where it is found to be
$$
\inf_{z,n}\{(|q_n||q_nz-p_n|)^{-1}\}=\left\{
\begin{array}{cc}
1&d=1,\\
\sqrt{\frac{486-\sqrt{3}}{786}}=0.78493\ldots&d=2,\\
\sqrt{\frac{7+\sqrt{21}}{7}}=1.28633\ldots&d=3,\\
\sqrt{\frac{2093-9\sqrt{21}}{2408}}=0.92307\ldots&d=7,\\
\frac{1}{5\sqrt{2}}\sqrt{30-8\sqrt{5}-5\sqrt{11}+3\sqrt{55}}=0.59627\ldots&d=11.
\end{array}
\right.
$$
The smallest integers of norm greater than one in $\mc{O}_d$ have absolute values of $\sqrt{2}$ (for $d=1,2,7$) and $\sqrt{3}$ (for $d=3,11$).  Multiplying these potential values of $|t|$ by the above constants gives values of $\delta$ greater than one, so that $|\delta-|q/q_n||\geq|\delta-1|$ is bounded away from zero.  Hence we are left to explore those rationals $p/q$ with $|t|=1$.

For general $t$ we have
$$
qz-p=q\frac{p_n+z_np_{n-1}}{q_n+z_nq_{n-1}}-p=(q_ns+tq_{n-1})\frac{p_n+z_np_{n-1}}{q_n+z_nq_{n-1}}-(p_ns+tp_{n-1})=\frac{(-1)^n(sz_n-t)}{q_n+z_nq_{n-1}}
$$
and
$$
q_nz-p_n=\frac{(-1)^nz_n}{q_n+z_nq_{n-1}},
$$
and we want $\alpha>0$ such that
$$
|q_nz-p_n|\leq\alpha|qz-p|.
$$
Substituting the above we have
$$
|q_nz-p_n|\leq\alpha|qz-p|\Longleftrightarrow\frac{|z_n|}{|q_n+z_nq_{n-1}|}\leq\alpha\frac{|z_ns-t|}{|q_n+z_nq_{n-1}|}\Longleftrightarrow\frac{1}{|s-t/z_n|}\leq\alpha.
$$
If $|s-t/z_n|<1/2$ and $|t|=1$, then $s/t=a_{n+1}$ since $s/t\in\mc{O}$ is the nearest integer to $1/z_n$.  However (with $|q_{n-1}|<|q|\leq|q_{n}|$, $q=sq_{n}+tq_{n-1}$),
$$
\left|\frac{q_{n+1}}{q_{n}}\right|=\left|a_{n+1}+\frac{q_{n-1}}{q_{n}}\right|=\left|\frac{s}{t}+\frac{q_{n-1}}{q_{n}}\right|=\left|s+t\frac{q_{n-1}}{q_{n}}\right|=\left|\frac{q}{q_{n}}\right|\leq1,
$$
and we obtain a contradiction if $|s-t/z_n|<1/2$ and $|t|=1$.  Hence when $|t|=1$ we can take $\alpha=2$.

In summary, we can take
$$
\alpha_d=\left\{
\begin{array}{cc}
2.41421\ldots&d=1\\
9.08592\ldots&d=2\\
2&d=3\\
3.27419\ldots&d=7\\
30.51490\ldots&d=11\\
\end{array}
\right.,
$$
taking the maximum of $2$ (covering the case $|t|=1$) and the bound on $\frac{1}{\delta-|q/q_n|}$ for $|t|>1$.
\end{proof}
No attempt was made to optimize the value of $\alpha$ in the lemma.  The above result for $d=1,3$ and $\alpha=1$ is contained in Theorem 2 of \cite{La}.  Another proof for $d=1$ and $\alpha=5$ is Theorem 5.1 of \cite{Hen}, and a proof for $d=3$, $\alpha=2$ can be found in \cite{Da2}.  The purpose of the above lemma is to establish the following proposition (which for $d=3$ is Corollary 1.3 of \cite{Da2}).
\begin{theorem}\label{thm:bndpartquots}
A number $z\in\mbb{C}\setminus K$ is badly approximable if and only if its partial quotients $a_n$ are bounded (if and only if the remainders $z_n$ are bounded away from zero).  In particular, if $|a_n|\leq\beta$ for all $n$ and $p/q\in K$, then $|z-p/q|\geq C'/|q|^2$ where
$$
C'=\frac{1}{\alpha(\beta+1)(\beta+\rho+1)}.
$$
\end{theorem}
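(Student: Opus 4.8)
The plan is to prove the three stated equivalences in turn, reading off the explicit constant from the last one. Throughout, $z\notin K$, so the algorithm never terminates, every $z_n\neq 0$, and $a_n$ is defined for all $n$.

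First I would dispose of the elementary equivalence ``$a_n$ bounded $\iff$ $z_n$ bounded away from zero''. Since $1/z_n=a_{n+1}+z_{n+1}$ with $|z_{n+1}|\leq\rho$, we have $\big||a_{n+1}|-|1/z_n|\big|\leq\rho$, so $\{a_n\}_{n\geq 1}$ is bounded iff $\{1/z_n\}_{n\geq 0}$ is bounded iff $\{z_n\}_{n\geq 0}$ is bounded away from $0$. (If one prefers, Lemma \ref{lemma:normalize} applied to $g_n^{-1}$ identifies $g_n^{-1}(z)=1/z_n$, which repackages this and the next step geometrically, but it is not needed.) Next, the easy direction ``$a_n$ unbounded $\Rightarrow$ not badly approximable'': from $z-p_n/q_n=(-1)^n/\big(q_n^2(z_n^{-1}+q_{n-1}/q_n)\big)$, $|q_{n-1}/q_n|\leq 1$ (Proposition \ref{prop:mono}), and $|z_n^{-1}|=|a_{n+1}+z_{n+1}|\geq|a_{n+1}|-\rho$, one gets $|z-p_n/q_n|\leq 1/\big(|q_n|^2(|a_{n+1}|-\rho-1)\big)$ whenever $|a_{n+1}|>\rho+1$. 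Since $p_n,q_n$ are coprime in $\mathcal O$ (their determinant $p_nq_{n-1}-p_{n-1}q_n$ is a unit) and $|q_n|\to\infty$, an unbounded subsequence of the $|a_n|$ forces $\inf_{p/q}|q|^2|z-p/q|=0$; hence badly approximable implies the $a_n$ are bounded.

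It remains to prove the quantitative converse. Suppose $|a_n|\leq\beta$ for all $n$ and let $p/q\in K$; passing to the reduced fraction does not change $z-p/q$ and does not increase $|q|$, so I may assume $(p,q)=1$. Using monotonicity (Proposition \ref{prop:mono}) together with $q_{-1}=0$, $q_0=1$, $|q_n|\to\infty$, pick the unique $n\geq 0$ with $|q_{n-1}|<|q|\leq|q_n|$, and apply Lemma \ref{lemma:multconst} to get $|q_nz-p_n|\leq\alpha|qz-p|$. Re-indexing the convergence formula by one step — that is, clearing $q_n$ from the displayed expression for $z-p_n/q_n$ and substituting $1/z_n=a_{n+1}+z_{n+1}$ and $q_{n+1}=a_{n+1}q_n+q_{n-1}$ — yields the identity $q_nz-p_n=(-1)^n/(q_{n+1}+z_{n+1}q_n)$. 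Hence, using $|z_{n+1}|\leq\rho$ and $|q_{n+1}|=|a_{n+1}q_n+q_{n-1}|\leq(\beta+1)|q_n|$ (from $q_{n+1}/q_n=a_{n+1}+q_{n-1}/q_n$, $|q_{n-1}/q_n|\leq 1$), one obtains $|q_nz-p_n|\geq 1/\big((\beta+1+\rho)|q_n|\big)$. The same recursion $q_n=a_nq_{n-1}+q_{n-2}$ gives $|q_n|\leq(\beta+1)|q_{n-1}|<(\beta+1)|q|$ for $n\geq 1$, and trivially $|q_0|=1\leq|q|$ when $n=0$. Combining everything, $|qz-p|\geq\frac1\alpha|q_nz-p_n|\geq\frac{1}{\alpha(\beta+1+\rho)|q_n|}\geq\frac{1}{\alpha(\beta+1)(\beta+\rho+1)|q|}$, i.e. $|z-p/q|\geq C'/|q|^2$ with $C'=\frac{1}{\alpha(\beta+1)(\beta+\rho+1)}$.

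I expect the main obstacle to be bookkeeping rather than any single estimate: legitimizing the reduction to coprime $p/q$, handling the small-denominator case $n=0$ (where $q_{-1}=0$, $q_0=1$) so that both Lemma \ref{lemma:multconst} and Proposition \ref{prop:mono} genuinely apply, and carefully deriving the re-indexed identity $q_nz-p_n=(-1)^n/(q_{n+1}+z_{n+1}q_n)$ from the stated algebraic properties. Once these are in place, the chain of inequalities above is routine and produces exactly the claimed constant.
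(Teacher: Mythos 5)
Your proposal is correct and follows essentially the same route as the paper: the contrapositive argument for ``badly approximable $\Rightarrow$ bounded $a_n$'' is just the algebraic form of the paper's use of Lemma \ref{lemma:normalize}, and the quantitative converse uses the same ingredients (Lemma \ref{lemma:multconst}, monotonicity of $|q_n|$, the identity $q_nz-p_n=(-1)^n/(q_{n+1}+z_{n+1}q_n)$, and $|q_{n+1}/q_n|\leq\beta+1$) arranged to give the identical constant $C'$.
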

\begin{proof}
If $z$ is badly approximable, then there is a $C'>0$ such that for each convergent $p_n/q_n$ to $z$, the disk $|w-p_n/q_n|\leq C'/|q_n|^2$ does not contain $z$.  Mapping $p_n/q_n$ to $\infty$ via $g_n^{-1}$, where
$$
g_n=\left(
\begin{array}{cc}
a_0&1\\
1&0\\
\end{array}
\right)\ldots
\left(
\begin{array}{cc}
a_n&1\\
1&0\\
\end{array}
\right)=
\left(
\begin{array}{cc}
p_n&p_{n-1}\\
q_n&q_{n-1}\\
\end{array}
\right),
$$
maps the disk $|w-p_n/q_n|\leq C'/|q_n|^2$ to the region $|w+q_{n-1}/q_n|\geq1/C'$, centered at $g_n^{-1}(\infty)=-q_{n-1}/q_n$ (cf. Lemma \ref{lemma:normalize}).  Because $g_n^{-1}(z)$ is inside the disk of radius $1/C'$ centered at $-q_{n-1}/q_n$ and $|-q_{n-1}/q_n|<1$, we have
\begin{align*}
a_{n+1}+z_{n+1}&=1/z_n=g_n^{-1}(z),\\
|a_{n+1}|&\leq|z_{n+1}|+|g_n^{-1}(z)|\leq\rho+1+1/C'.
\end{align*}
Hence $a_{n+1}$ is bounded.  See Figure \ref{fig:diag} below for an illustration.

By Lemma \ref{lemma:multconst}, for $z$ and $p/q$ with $|q_{n-1}|<|q|\leq|q_n|$ we have
\begin{align*}
\left|z-\frac{p_n}{q_n}\right|&\leq\alpha\left|z-\frac{p}{q}\right|\left|\frac{q}{q_n}\right|\leq\alpha\left|z-\frac{p}{q}\right|\frac{|q|^2}{|q_n|^2}\left|\frac{q_n}{q_{n-1}}\right|\\
&=\alpha\left|z-\frac{p}{q}\right|\frac{|q|^2}{|q_n|^2}\left|a_n+\frac{q_{n-2}}{q_{n-1}}\right|\\
&\leq\alpha\left|z-\frac{p}{q}\right|\frac{|q|^2}{|q_n|^2}(|a_n|+1),\\
|q_n|^2\left|z-\frac{p_n}{q_n}\right|&\leq\alpha(|a_n|+1)|q|^2\left|z-\frac{p}{q}\right|.
\end{align*}
This shows that if $z$ has bounded partial quotients, then $z$ is badly approximable if and only if it is badly approximable \textit{by its convergents}.  For approximation by convergents, we have
\begin{align*}
\left|z-\frac{p_n}{q_n}\right|=\frac{1}{|q_n|^2|z_n^{-1}+q_{n-1}/q_n|}=\frac{1}{|q_n|^2|a_{n+1}+z_{n+1}+q_{n-1}/q_n|}\geq\frac{1}{|q_n|^2(|a_{n+1}|+\rho+1)},
\end{align*}
showing that if the partial quotients of $z$ are bounded, then $z$ is badly approximable by convergents and therefore badly approximable.  For an approximation constant, the above discussion gives $|z-p/q|\geq C'/|q|^2$ for any $p/q\in K$ where
$$
C'=\frac{1}{\alpha(\beta+1)(\beta+\rho+1)}
$$
and $\beta$ is an upper bound for the $a_n$.
\end{proof}

\begin{figure}[hbt]
\begin{center}
\includegraphics[scale=.5]{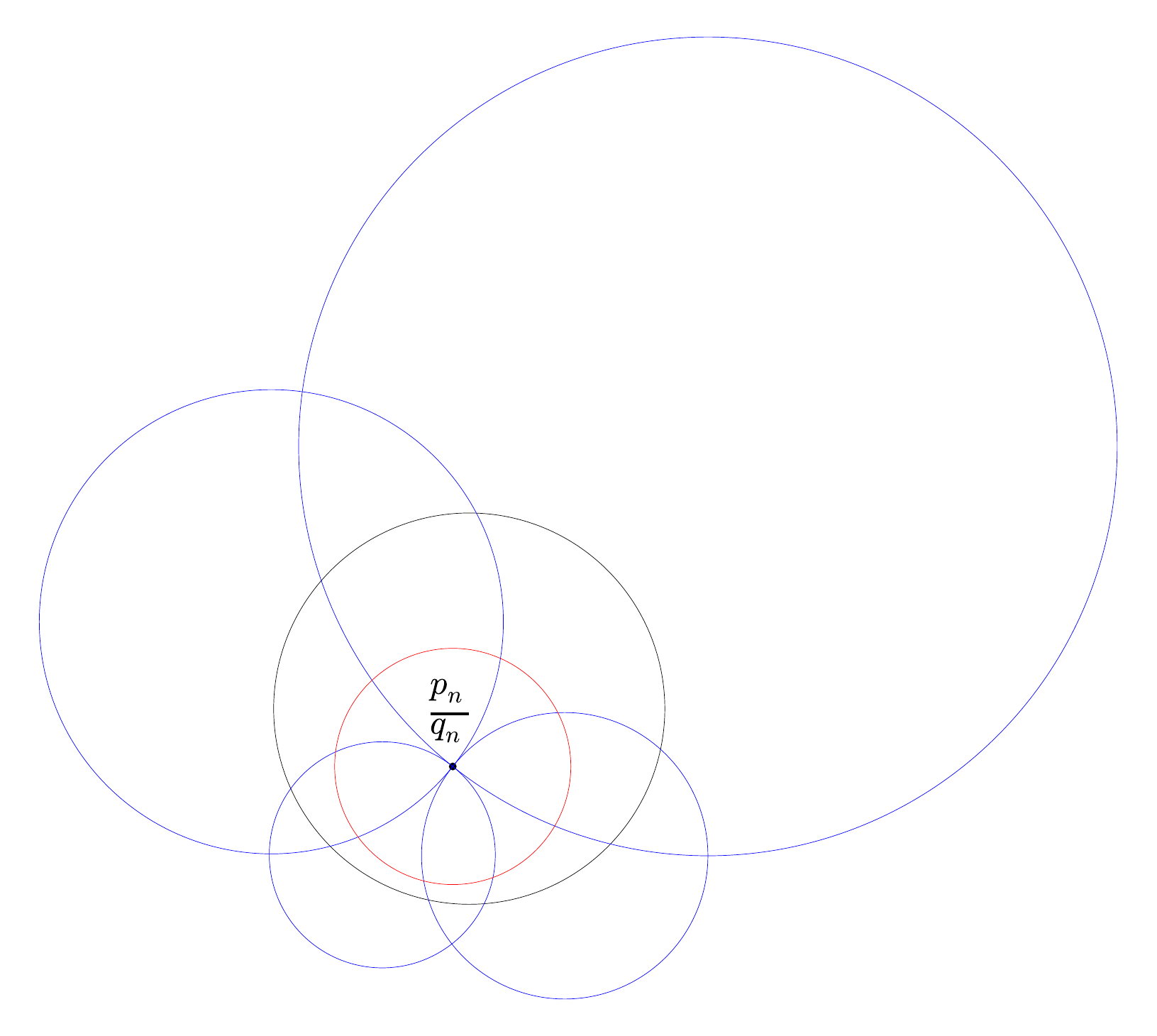}
\includegraphics[scale=.5]{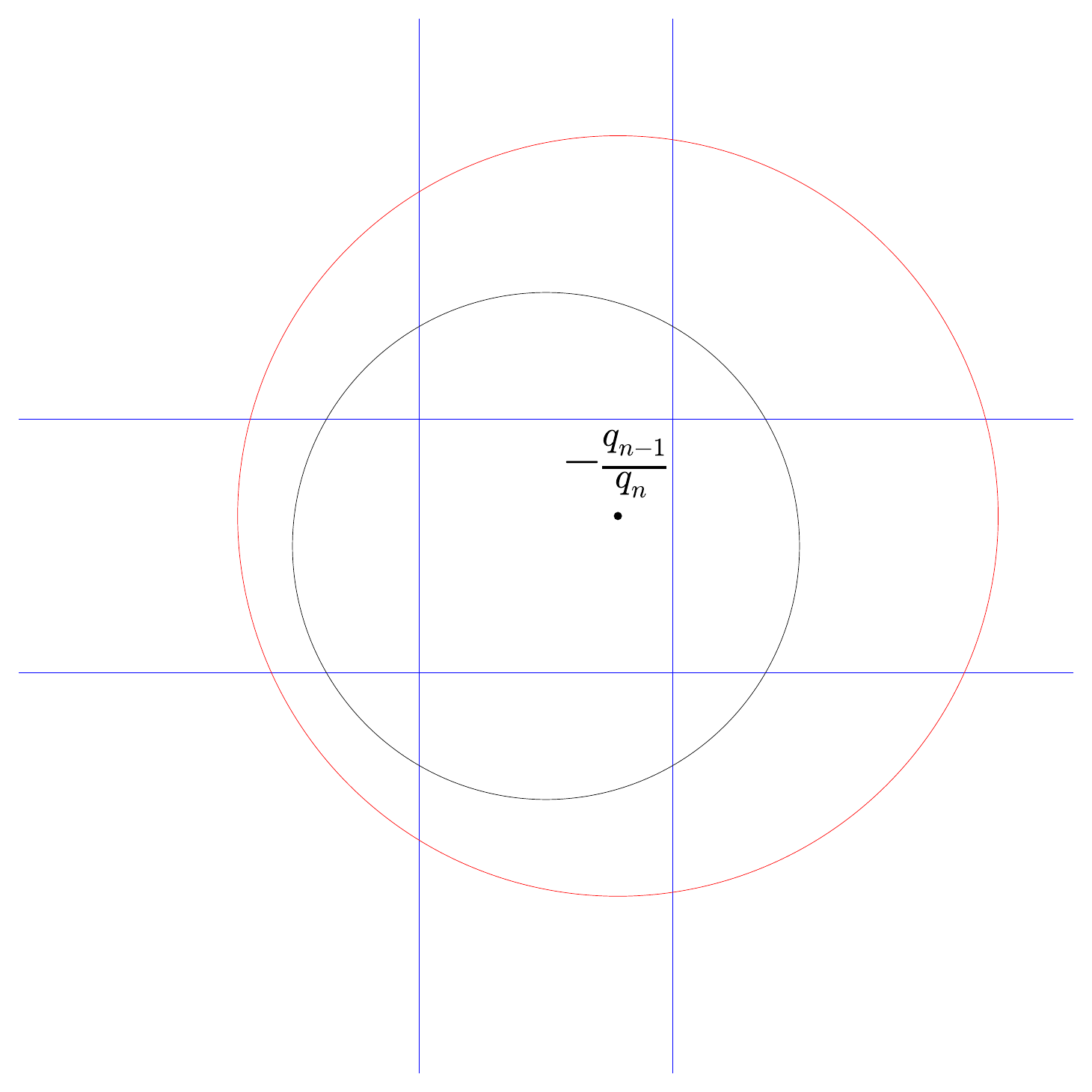}
\caption{Over $\mbb{Q}(\sqrt{-1})$, we have the points $p_n/q_n=g_n(\infty)$ and $-q_{n-1}/q_n=g_n^{-1}(\infty)$, along with the unit circle and its image under $g_n$ (black), circles of radius $1/C'$ and $C'/|q_n|^2$ (red), and the lines defining $V$ and their images under $g_n$ (blue).}
\label{fig:diag}
\end{center}
\end{figure}

%%%%%%%%%%%%%%%%%%%%%%%%%%%%%%%%%%%%%%%%%%%%%%%%%%%%%%%%%%%%%%%%%%%%%%%%%%%%%%%%%%%%%%%%%%%%%%
%%%%%%%%%%%%%%%%%%%%%%%%%%%%%%%%%%%%%%%%%%%%%%%%%%%%%%%%%%%%%%%%%%%%%%%%%%%%%%%%%%%%%%%%%%%%%%
%%%%%%%%%%%%%%%%%%%%%%%%%%%%%%%%%%%%%%%%%%%%%%%%%%%%%%%%%%%%%%%%%%%%%%%%%%%%%%%%%%%%%%%%%%%%%%
%%%%%%%%%%%%%%%%%%%%%%%%%%%%%%%%%%%%%%%%%%%%%%%%%%%%%%%%%%%%%%%%%%%%%%%%%%%%%%%%%%%%%%%%%%%%%%
%%%%%%%%%%%%%%%%%%%%%%%%%%%%%%%%%%%%%%%%%%%%%%%%%%%%%%%%%%%%%%%%%%%%%%%%%%%%%%%%%%%%%%%%%%%%%%
\section*{The continued fraction expansions of points on $K$-rational circles}

In this section we focus on producing $z$ with bounded partial quotients extending the results of \cite{BG} to all of the Euclidean imaginary quadratic fields $K_d$.  We will show that there are many circles in the complex plane all of whose points have bounded partial quotients.

We will consider equivalence classes of indefinite integral binary Hermitian forms.  A \textit{binary Hermitian form} $H(z,w)$ is a function of the form
$$
H(z,w)=(\overline{z},\overline{w})\left(\begin{array}{cc}A&-B\\-\overline{B}&C\\\end{array}\right)\left(\begin{array}{c}z\\w\end{array}\right)=Az\overline{z}-B\overline{z}w-\overline{B}z\overline{w}+Cw\overline{w}, \ A,C\in\mbb{R}, \ B\in\mbb{C}.
$$
We denote by $\Delta(H)$ the determinant $\det(H)=AC-|B|^2$ of the Hermitian matrix defining $H$.  The binary Hermitian form $H$ is \textit{integral} over $K$ if the matrix entries of $H$ are integers, i.e. $A,C\in\mbb{Z}$ and $B\in\mc{O}$.  The form is \textit{indefinite} (takes on both positive and negative values) if and only if $\Delta(H)<0$.  The zero set of an indefinite $H$ on the Riemann sphere $P^1(\mbb{C})$ is a circle (using homogeneous coordinates $[z:w]$ on the projective line)
$$
Z(H):=\{[z:w]\in P^1(\mbb{C}) : H(z,w)=0\}
$$
which is either a circle or a line in the chart $\mbb{C}_z=\{[z:1]\in P^1(\mbb{C})\}$
$$
Z(H)\cap\mbb{C}_z=\left\{
\begin{array}{cc}
\{z : |z-B/A|^2=-\Delta/A^2\} &\ \text{ if } A\neq0,\\
\{z : \text{Re}(\overline{B}z)=C\} &\ \text{ if } A=0.\\
\end{array}\right.
$$
We will be interested in equivalence of forms over $GL_2(\mc{O})$, where $g\in GL_2(\mbb{C})$ acts as a normalized linear change of variable on the left, ${}^gH=|\det(g)|(g^{-1})^*Hg^{-1}$ (here $*$ denotes the conjugate transpose), and also with the M\"{o}bius action of $GL_2(\mbb{C})$ on $P^1(\mbb{C})$,
$$
g\cdot[z:w]=[az+bw:cz+dw], \ g=\left(\begin{array}{cc}a&b\\c&d\\\end{array}\right).
$$
We collect some easily verified facts in the following lemma.
\begin{lemma}
The following hold for the action ${}^gH=|\det(g)|(g^{-1})^*Hg^{-1}$, $g\in GL_2(\mbb{C})$, on indefinite binary Hermitian forms.
\begin{itemize}
\item
The action of $GL_2(\mbb{C})$ is determinant preserving, i.e. $\Delta({}^gH)=\Delta(H)$.
\item
The map $H\mapsto Z(H)$ is $GL_2(\mbb{C})$-equivariant (i.e. $g\cdot Z(H)=Z({}^gH)$).
\end{itemize}
Furthermore, an integral form $H$ is isotropic (i.e. $H(z,w)=0$ for some $[z:w]\in P^1(K)$) if and only if $-\Delta(H)$ is in the image of the norm map $N^K_{\mbb{Q}}:K\to\mbb{Q}$.
\end{lemma}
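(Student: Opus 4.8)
The plan is to dispose of the two bulleted claims by a short matrix computation and then to establish the norm criterion by reducing to the explicit description of $Z(H)$ recorded just above. For the bullets, write $M$ for the Hermitian matrix of $H$ and $v=(z,w)^{\mathsf{T}}$, so that $H(z,w)=v^{*}Mv$; then ${}^{g}H$ has matrix $|\det g|\,(g^{-1})^{*}Mg^{-1}$, from which $\Delta({}^{g}H)=|\det g|^{2}\,|\det g^{-1}|^{2}\,\det M=\det M=\Delta(H)$, and ${}^{g}H(v)=|\det g|\,H(g^{-1}v)$. The last identity shows $[z:w]\in Z({}^{g}H)$ if and only if $g^{-1}\cdot[z:w]\in Z(H)$, i.e. $Z({}^{g}H)=g\cdot Z(H)$.

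For the isotropy statement I would split on whether $A=0$, exactly as in the two-line description of $Z(H)\cap\mbb{C}_z$. Suppose first $A\neq0$. Completing the square gives the identity
\[
H(z,1)=A\,|z-B/A|^{2}+\Delta(H)/A,
\]
so $[z:1]\in Z(H)$ precisely when $|z-B/A|^{2}=-\Delta(H)/A^{2}$; moreover $H(z,0)=Az\overline{z}\neq0$ for $z\neq0$, so $[1:0]\notin Z(H)$ and every zero of $H$ is of the affine shape $[z:1]$. If $H$ is isotropic over $K$, such a zero can be taken with $z\in K$, whence $|z-B/A|^{2}=N^{K}_{\mbb{Q}}(z-B/A)$ and, multiplying the defining relation through by $A^{2}=N^{K}_{\mbb{Q}}(A)$, we get $-\Delta(H)=N^{K}_{\mbb{Q}}(Az-B)$, putting $-\Delta(H)$ in the image of the norm map. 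Conversely, if $\xi\in K$ has $N^{K}_{\mbb{Q}}(\xi)=-\Delta(H)$, then $z=(B+\xi)/A\in K$ satisfies $|z-B/A|^{2}=N^{K}_{\mbb{Q}}(\xi)/A^{2}=-\Delta(H)/A^{2}$, so $[z:1]$ is a $K$-rational zero and $H$ is isotropic. Suppose now $A=0$. Then $[1:0]\in P^{1}(\mbb{Q})\subseteq P^{1}(K)$ is a zero of $H$ (indeed $H(1,0)=A=0$), so $H$ is automatically isotropic; and $\Delta(H)=-B\overline{B}=-N^{K}_{\mbb{Q}}(B)$, so $-\Delta(H)=N^{K}_{\mbb{Q}}(B)$ is automatically in the image of the norm map. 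Hence both sides of the equivalence hold when $A=0$ and there is nothing to prove; indefiniteness forces $B\neq0$, and if an affine zero is wanted, $z=CB/\bigl(2N^{K}_{\mbb{Q}}(B)\bigr)\in K$ satisfies $H(z,1)=0$.

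I do not expect a genuine obstacle: the only real inputs are the completion-of-square identity and the multiplicativity $N^{K}_{\mbb{Q}}(A\xi)=A^{2}N^{K}_{\mbb{Q}}(\xi)$ for $A\in\mbb{Z}$, together with $N^{K}_{\mbb{Q}}(z)=|z|^{2}$ for $z\in K$ (valid since $K$ is imaginary quadratic). The two points meriting a little care are not overlooking the zero at $w=0$ — which is precisely why the case $A=0$ must be isolated — and noting that $-\Delta(H)=B\overline{B}-AC$ is an integer, in fact positive by indefiniteness, so that ``norm'' here means the rational norm $N^{K}_{\mbb{Q}}\colon K\to\mbb{Q}$ rather than the norm of an element of $\mc{O}$.
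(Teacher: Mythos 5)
Your proof is correct and follows essentially the same route as the paper: the two bullets are the same matrix computations, and your completion of the square $H(z,1)=A|z-B/A|^2+\Delta/A$ is just the dehomogenized form of the paper's identity $AH(z,w)=|Az-Bw|^2+\Delta|w|^2$, with the $A=0$ case handled separately in both (your treatment of that case, noting $[1:0]$ is then a zero and $-\Delta=N^K_{\mbb{Q}}(B)$, is in fact slightly more careful than the paper's).
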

\begin{proof}
For $g\in SL_2(\mbb{C})$ we have
$$
\det({}^gH)=|\det(g)|^2\det((g^{-1})^*Hg^{-1})=\frac{|\det(g)|^2\det(H)}{\det(g)\overline{\det(g)}}=\det(H).
$$
The second bullet follows from
$$
((\bar{z}\ \bar{w})g^*)({}^gH)(g(z\ w)^t)=|\det(g)|((\bar{z}\ \bar{w})g^*)((g^{-1})^*Hg^{-1})g(z,w)^t=|\det(g)|H(z,w).
$$
% For generators $g=\left(\begin{array}{cc}1&b\\0&1\\\end{array}\right)$, $h=\left(\begin{array}{cc}0&1\\1&0\\\end{array}\right)$, of $PSL_2(\mbb{C})$ and $H=\left(\begin{array}{cc}A&-B\\-\overline{B}&C\\\end{array}\right)$, we have
% $$
% {}^gH=\left(\begin{array}{cc}A&-B-Ab\\-\overline{B}-A\overline{b}&C+B\overline{b}+\overline{B}b+Ab\overline{b}\\\end{array}\right), \ {}^hH=\left(\begin{array}{cc}C&-\overline{B}\\-B&A\\\end{array}\right)
% $$
% with zero sets (assuming $A\neq0$ for convenience, the other cases are similar)
% \begin{align*}
% Z({}^gH)\cap \mbb{C}_z&=\{z : |z-(B/A+b)|^2=-\Delta/A^2\}=g\cdot Z(H)\cap\mbb{C}_z,\\
% Z({}^hH)\cap \mbb{C}_w&=\{w : |w-B/A|^2=-\Delta/A^2\}=h\cdot Z(H)\cap\mbb{C}_w.
% \end{align*}
Finally, the factorization (assuming $A\neq0$ else $-\Delta=|B|^2$ is a norm and $H(1,0)=H(0,1)=0$)
$$
AH(z,w)=|Az-Bw|^2+\Delta|w|^2
$$
shows that $-\Delta$ is a norm if and only if there are $z,w\in K$ not both zero with $H(z,w)=0$.
\end{proof}
Suppose $H$ is an indefinite integral binary Hermitian form of determinant $\Delta$ and $z=[a_0;a_1,\ldots]$ satisfies $H(z,1)=0$.  Define $H_n={}^{g_n^{-1}}H$, where
$$
g_n^{-1}=\left(\begin{array}{cc}0&1\\1&-a_n\\\end{array}\right)\cdots\left(\begin{array}{cc}0&1\\1&-a_0\\\end{array}\right), \ g_n=\left(
\begin{array}{cc}
p_n&p_{n-1}\\
q_n&q_{n-1}\\ 
\end{array}\right),
$$
with notation
$$
H_n=\left(\begin{array}{cc}A_n&-B_n\\-\overline{B_n}&C_n\\\end{array}\right).
$$
In particular, we have
\begin{align*}
A_n&=H(p_n,q_n),\\
C_n&=H(p_{n-1},q_{n-1})=A_{n-1}.
\end{align*}
Note that $H_n(1,z_n)=0$ for all $n\geq 1$ because $g_n(1/z_n)=z$.

The main observation for us is the following theorem, which is essentially Theorem 4.1 of \cite{BG} generalized to the other Euclidean $K$ and arbitrary integral binary Hermitian forms.  One could follow the inductive geometric proof of \cite{BG}, but we give an algebraic proof analogous to one demonstrating that real quadratic irrationals have eventually periodic simple continued fraction expansions, e.g. \cite{Kh}, Theorem 28.  In fact, we may as well note that the proof of Theorem \ref{thm:finite_orbit} below applies \textit{mutatis mutandis} to integral binary quadratic forms over $K$, showing that the continued fraction expansions of quadratic irrationals over $K$ are eventually periodic as expected.

\begin{theorem}\label{thm:finite_orbit}
If $[z:1]$ is a zero of an indefinite integral binary Hermitian form $H$, then the collection $\{H_n : n\geq0\}$ is finite.
\end{theorem}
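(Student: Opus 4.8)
The plan is to deduce finiteness of $\{H_n\}$ from a single uniform bound on the integers $A_n=H(p_n,q_n)$, that bound coming directly from the rate of rational approximation already recorded in Proposition~\ref{prop:approx_const}. If $z\in K$ the expansion terminates and there is nothing to prove, so I assume $z$ is irrational throughout.

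\emph{Reduction.} The $GL_2$-action is determinant-preserving, so every $H_n$ has the same determinant $\Delta:=\Delta(H)\in\mbb{Z}_{<0}$; moreover $C_n=H(p_{n-1},q_{n-1})=A_{n-1}$ for $n\geq1$ (and $C_0=H(p_{-1},q_{-1})=H(1,0)=A$). Hence a bound $|A_n|\leq M$ for all $n$ forces $|C_n|\leq\max(|A|,M)$, and then $|B_n|^2=A_nC_n-\Delta$ is bounded, so the triples $(A_n,B_n,C_n)\in\mbb{Z}\times\mc{O}\times\mbb{Z}$, and therefore the forms $H_n$, range over a finite set. So everything reduces to bounding $|A_n|$.

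\emph{The estimate.} Since $H(\lambda z,\lambda w)=|\lambda|^2 H(z,w)$, we have $A_n=H(p_n,q_n)=|q_n|^2 H(p_n/q_n,1)$, so it suffices to show $|H(p_n/q_n,1)|=O(|q_n|^{-2})$. Expanding $H(\cdot,1)$ about $z$ and using $H(z,1)=0$ gives the exact identity
$$
H(u,1)=A|u-z|^2+2\,\mathrm{Re}\big((A\bar z-\bar B)(u-z)\big),
$$
whence $|H(u,1)|\leq|A|\,|u-z|^2+2|Az-B|\,|u-z|$. The one observation needed is that $|Az-B|=\sqrt{-\Delta}$ whether or not $A$ vanishes: for $A\neq0$ this is the factorization $AH(z,1)=|Az-B|^2+\Delta=0$ from the lemma above, and for $A=0$ it is just $|B|=\sqrt{-\Delta}$. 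Now Proposition~\ref{prop:approx_const} gives $|z-p_n/q_n|\leq\kappa/|q_n|^2$ with $\kappa=\rho/(1-\rho)$ (this also holds at $n=0$, since $|z-a_0|=|z_0|\leq\rho\leq\kappa$ and $q_0=1$), so using $|q_n|\geq1$,
$$
|A_n|=|q_n|^2\,|H(p_n/q_n,1)|\leq|A|\,\frac{\kappa^2}{|q_n|^2}+2\kappa\sqrt{-\Delta}\leq|A|\kappa^2+2\kappa\sqrt{-\Delta}
$$
for every $n\geq0$, which together with the reduction step proves the theorem and gives explicit bounds on the entries of the $H_n$.

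I do not anticipate a genuine obstacle: this is the Hermitian-form version of the classical argument that real quadratic irrationals have eventually periodic continued fractions, with the fixed integer $\Delta$ playing the role of the discriminant, and the very same computation (a quadratic form $Q(p_n,q_n)$ and its discriminant in place of $H(p_n,q_n)$ and $\Delta$) yields the analogous statement for quadratic irrationals over $K$. The only points requiring care are the uniform identity $|Az-B|=\sqrt{-\Delta}$, which lets the degenerate line case $A=0$ be handled alongside the circle case, and the bookkeeping at $n=0$, where $q_{-1}=0$ and $C_0=A$ rather than $A_{-1}$.
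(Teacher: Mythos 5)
Your proof is correct and follows essentially the same route as the paper's: bound $A_n=H(p_n,q_n)$ uniformly via the approximation quality $|z-p_n/q_n|\leq\kappa/|q_n|^2$ together with $H(z,1)=0$, then propagate to $C_n=A_{n-1}$ and $B_n$ through the fixed determinant. Your use of the exact identity $|Az-B|=\sqrt{-\Delta}$ is a minor tidying that yields a slightly sharper constant (the paper instead bounds $|z|$ by the center plus radius of $Z(H)$, picking up an extra $2|B|\kappa$), but the argument is the same.
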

\begin{proof}
In what follows, $\Delta=\Delta(H)=\Delta(H_n)$.  The inequality
$$
|z-p_n/q_n|\leq\frac{\kappa}{|q_n|^2}
$$
allows us to write
$$
p_n=q_nz+\frac{\gamma_n}{q_n}, \ |\gamma_n|\leq\kappa
$$
where $\kappa=\sup_{z,n}\{|q_n||q_nz-p_n|\}<2$ is the best constant from \cite{La} used in the proof of Lemma \ref{lemma:multconst}.

Substituting this into the formula for $A_n$ above gives
\begin{align*}
A_n&=H(q_nz+\gamma_n/q_n,q_n)=|q_n|^2H(z,1)+A\left(\overline{q_nz}\frac{\gamma_n}{q_n}+q_nz\frac{\overline{\gamma_n}}{\overline{q_n}}+\left|\frac{\gamma_n}{q_n}\right|^2\right)-\overline{B}\frac{\overline{q_n}}{q_n}\gamma_n-B\frac{q_n}{\overline{q_n}}\overline{\gamma_n}\\
&=A\left(\overline{q_nz}\frac{\gamma_n}{q_n}+q_nz\frac{\overline{\gamma_n}}{\overline{q_n}}+\left|\frac{\gamma_n}{q_n}\right|^2\right)-\overline{B}\frac{\overline{q_n}}{q_n}\gamma_n-B\frac{q_n}{\overline{q_n}}\overline{\gamma_n},
\end{align*}
and
\begin{align*}
|A_n|&\leq|A|\kappa^2+2|B|\kappa+2|A||z|\kappa\leq|A|\kappa^2+4|B|\kappa+2\kappa\sqrt{-\Delta}\\
&\leq\max\{|A|,|A|\kappa^2+4|B|\kappa+2\kappa\sqrt{-\Delta}\}=:\eta.
\end{align*}
(We take the max above so that the parameter $\eta$ is useful for bounds on $z_n$, $a_n$ when $n=0$, cf. Corollary \ref{cor:bounds} below.)  From this it follows that
\begin{align*}
|C_n|&=|A_{n-1}|\leq\eta,\\
|B_n|&=\sqrt{A_nC_n-\Delta}\leq\sqrt{\eta^2-\Delta},
\end{align*}
so that there are only finitely many possibilities for $H_n$.
\end{proof}

By requiring $H$ to be anisotropic, we bound the finitely many circles $Z(H_n)$ away from zero and infinity, obtaining bounded partial quotients.
\begin{cor}\label{cor:aniso}
If $[z:1]$ is a zero of an anisotropic indefinite integral binary Hermitian form $H$, then $z$ has bounded partial quotients in its nearest integer continued fraction expansion (and is therefore badly approximable over $K$).
\end{cor}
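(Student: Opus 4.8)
The plan is to read the corollary off of Theorem~\ref{thm:finite_orbit} and Theorem~\ref{thm:bndpartquots}. By the latter it is enough to show that the remainders $z_n=T^n(z_0)$ stay bounded away from $0$ by a constant depending only on $H$ and $K$. By Theorem~\ref{thm:finite_orbit} the forms $H_n={}^{g_n^{-1}}H$ take only finitely many values, all with the same determinant $\Delta=\Delta(H)<0$, and for each $n$ the point $z_n$ satisfies $H_n(1,z_n)=0$. So the whole problem reduces to checking that the finitely many circles cut out by the relations $H_n(1,\cdot)=0$ avoid a fixed punctured neighborhood of the origin, and then reading off a bound on $a_{n+1}=\floor{1/z_n}$.

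First I would record what anisotropy buys us. Since $[p_n:q_n]\in P^1(K)$ and $H$ is anisotropic, the integer $A_n=H(p_n,q_n)$ is nonzero, and likewise $C_n=A_{n-1}=H(p_{n-1},q_{n-1})\neq0$; being nonzero integers, $|A_n|\ge1$ and $|C_n|\ge1$. (Equivalently: $\Delta(H_n)=\Delta(H)$ and $H_n$ is integral, so by the Lemma above each $H_n$ is anisotropic, hence does not vanish at $[1:0]$ or $[0:1]$.) Next, applying the $A\leftrightarrow C$ mirror of the identity $AH(z,w)=|Az-Bw|^2+\Delta|w|^2$, namely $C_nH_n(z,w)=|C_nw-\overline{B_n}z|^2+\Delta|z|^2$, at $(z,w)=(1,z_n)$ and using $H_n(1,z_n)=0$ gives $|C_nz_n-\overline{B_n}|^2=-\Delta$. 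Thus $z_n$ lies on the circle of radius $\sqrt{-\Delta}/|C_n|$ centered at $\overline{B_n}/C_n$, so, using $(|B_n|-\sqrt{-\Delta})(|B_n|+\sqrt{-\Delta})=|B_n|^2+\Delta=A_nC_n$,
$$
|z_n|\ \ge\ \frac{\big|\,|B_n|-\sqrt{-\Delta}\,\big|}{|C_n|}\ =\ \frac{|A_nC_n|}{|C_n|\,(|B_n|+\sqrt{-\Delta})}\ =\ \frac{|A_n|}{|B_n|+\sqrt{-\Delta}}\ \ge\ \frac{1}{\sqrt{\eta^2-\Delta}+\sqrt{-\Delta}}\ =:\ \mu>0,
$$
where $|A_n|\ge1$ and $|B_n|\le\sqrt{\eta^2-\Delta}$ (with $\eta$ as in the proof of Theorem~\ref{thm:finite_orbit}) have been used, and $\mu$ depends only on $H$ and $K$ (indeed it is uniform over all $z\in Z(H)$). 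Hence $|a_{n+1}|=|\floor{1/z_n}|\le|1/z_n|+\rho\le\mu^{-1}+\rho$ for every $n\ge0$, so the partial quotients of $z$ are bounded, and Theorem~\ref{thm:bndpartquots} gives that $z$ is badly approximable, with explicit constant $C'$ obtained by taking $\beta=\mu^{-1}+\rho$ there.

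There is no genuinely hard step — the corollary is an unpacking of the finiteness theorem — but the point that needs care is precisely the role of anisotropy: it is exactly what forces $A_n,C_n$ to be nonzero integers, hence of absolute value $\ge1$, which is what keeps the circle through $z_n$ at a definite distance from $0$; for an isotropic $H$ some $A_n$ or $C_n$ could vanish and the circle could pass through the origin (the trajectory entering a cusp). The only other things to be careful about are using the correct chart — the relation is $H_n(1,z_n)=0$, so one works with $|C_nw-\overline{B_n}z|^2$ rather than $|A_nz-B_nw|^2$ — and the degenerate indices, which cause no trouble: if some $z_n=0$ then $z\in K$, contrary to hypothesis, and $a_0=\floor{z}$ is a single value irrelevant to boundedness of the partial quotients.
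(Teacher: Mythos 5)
Your proof is correct and takes essentially the same route as the paper: its Corollary \ref{cor:bounds} bounds $|1/z_n|\le|B_n/A_n|+\sqrt{-\Delta}/|A_n|\le\sqrt{\eta^2-\Delta}+\sqrt{-\Delta}$ using exactly the point you isolate, that integrality plus anisotropy force $|A_n|\ge 1$, and then passes to bounded $a_{n+1}$ and to bad approximability via Theorem \ref{thm:bndpartquots}. Your mirror computation with $C_n$ in the chart where $z_n$ (rather than $1/z_n$) lies on the circle is only a cosmetic variant and produces the identical constant.
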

A quantitative measure of the ``hole'' around zero (see Figures \ref{fig:circgauss}, \ref{fig:circeisen}) can be given in terms of the determinant $\Delta=\det(H)$ of the form, which in turn bounds the partial quotients and controls the approximation constant $|z-p/q|\geq C'/|q|^2$.
\begin{cor}\label{cor:bounds}
If $z\in Z(H)$ is a zero of the anisotropic integral indefinite binary Hermitian form $H$ of determinant $\Delta$, then the remainders $z_n$, $n\geq0$, are bounded below by
$$
|z_n|\geq\frac{1}{\sqrt{-\Delta}+\sqrt{\eta^2-\Delta}},
$$
with partial quotients bounded above by
$$
|a_n|\leq\rho+\sqrt{-\Delta}+\sqrt{\eta^2-\Delta},
$$
where $\eta$ is as in the proof of Theorem \ref{thm:finite_orbit}.  From these bounds, one can produce a $C'(H)>0$ such that $|z-p/q|\geq C'/|q|^2$ for all $p/q\in K$ and $[z:1]\in Z(H)$.
\end{cor}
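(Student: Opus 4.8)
The plan is to feed the finiteness of $\{H_n:n\geq0\}$ from Theorem~\ref{thm:finite_orbit} into the criterion of Theorem~\ref{thm:bndpartquots}, the crucial extra input being that anisotropy forces $A_n=H(p_n,q_n)$ to be a \emph{nonzero} integer. First I would note that the matrix $g_n$ with entries $p_n,p_{n-1},q_n,q_{n-1}$ lies in $GL_2(\mc{O})$ (integer entries, determinant $\pm1$), so $H_n={}^{g_n^{-1}}H$ is again integral of determinant $\Delta$, and since $H$ is anisotropic so is $H_n$; in particular $z\notin K$, so Theorem~\ref{thm:bndpartquots} will apply. Because $(p_n,q_n)\neq(0,0)$ for every $n\geq0$ (for $n=0$ it is $(a_0,1)$), anisotropy gives $A_n=H(p_n,q_n)\neq0$, hence $|A_n|\geq1$ as $A_n\in\mbb{Z}$. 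From the proof of Theorem~\ref{thm:finite_orbit} I also retain $|B_n|\leq\sqrt{\eta^2-\Delta}$ for all $n\geq0$.

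Next, since $g_n\cdot[1:z_n]=[z:1]\in Z(H)$ we have $H_n(1,z_n)=0$, and the factorization $A_nH_n(1,z_n)=|A_n-B_nz_n|^2+\Delta|z_n|^2$ from the lemma above (applied to $H_n$) yields $|A_n-B_nz_n|=\sqrt{-\Delta}\,|z_n|$. The triangle inequality then gives
$$
1\leq|A_n|\leq|A_n-B_nz_n|+|B_n|\,|z_n|\leq\big(\sqrt{-\Delta}+\sqrt{\eta^2-\Delta}\big)|z_n|,
$$
which is the asserted lower bound on $|z_n|$. For the partial quotients with $n\geq1$, using $1/z_{n-1}=a_n+z_n$, the bound $|z_n|\leq\rho$ (as $z_n\in V$), and the lower bound on $|z_{n-1}|$ just obtained,
$$
|a_n|\leq\frac{1}{|z_{n-1}|}+|z_n|\leq\rho+\sqrt{-\Delta}+\sqrt{\eta^2-\Delta}
$$
(the value of $a_0$ is immaterial to approximability). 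Finally, setting $\beta=\rho+\sqrt{-\Delta}+\sqrt{\eta^2-\Delta}$, every $z\in Z(H)$ has $|a_n|\leq\beta$ for $n\geq1$, so Theorem~\ref{thm:bndpartquots} produces $|z-p/q|\geq C'/|q|^2$ for all $p/q\in K$ with $C'(H)=\big(\alpha(\beta+1)(\beta+\rho+1)\big)^{-1}$, a quantity depending only on $H$ and $K$.

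I expect the only step with real content to be the observation $|A_n|\geq1$: this is exactly what keeps the finitely many circles $Z(H_n)$ uniformly away from the origin, so anisotropy is used in an essential way, whereas the finiteness of Theorem~\ref{thm:finite_orbit} alone (isotropic case allowed) gives no such separation and the circles could pass arbitrarily close to $0$. Everything else is routine bookkeeping — keeping track of the index shift between $a_n$ and $z_{n-1}$ and checking that the $n=0$ instances cause no trouble.
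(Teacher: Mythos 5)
Your proof is correct and follows essentially the same route as the paper: bound $|A_n|$ below by $1$ using anisotropy and integrality, bound $|B_n|$ above via Theorem~\ref{thm:finite_orbit}, deduce the lower bound on $|z_n|$ from $H_n(1,z_n)=0$, and feed the resulting bound on the partial quotients into Theorem~\ref{thm:bndpartquots}. The only cosmetic difference is that you derive the key inequality from the factorization $A_nH_n(1,z_n)=|A_n-B_nz_n|^2+\Delta|z_n|^2$, whereas the paper reads it off geometrically from the center $B_n/A_n$ and radius $\sqrt{-\Delta}/|A_n|$ of the circle $Z(H_n)$ containing $1/z_n$ --- these are the same estimate, and you usefully make explicit the step $|A_n|\geq1$ that the paper leaves implicit.
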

\begin{proof}
We use the notation of Theorem \ref{thm:finite_orbit}.  Since $1/z_n$ lies on $Z(H_n)\cap\mbb{C}_z$, which has radius $\sqrt{-\Delta}/|A_n|$ and center $B_n/A_n$, we have
\begin{align*}
\frac{1}{|z_n|}&\leq\left|\frac{B_n}{A_n}\right|+\frac{\sqrt{-\Delta}}{|A_n|}\leq\sqrt{\eta^2-\Delta}+\sqrt{-\Delta},\\
|z_n|&\geq\frac{1}{\sqrt{-\Delta}+\sqrt{\eta^2-\Delta}}.
\end{align*}
We also have
$$
a_{n+1}+z_{n+1}=\frac{1}{z_n}
$$
so that
$$
|a_{n+1}|\leq|z_{n+1}|+\frac{1}{|z_n|}\leq\rho+\sqrt{-\Delta}+\sqrt{\eta^2-\Delta}.
$$
From Theorem \ref{thm:bndpartquots}, it follows that $|z-p/q|\geq C'/|q|^2$ for all $p/q\in K$, where
$$
C'=\frac{1}{\alpha(\beta+1)(\beta+\rho+1)}, \ \beta=\rho+\sqrt{-\Delta}+\sqrt{\eta^2-\Delta}.
$$
\end{proof}
%See Figures \ref{fig:circgauss} and \ref{fig:circeisen} for examples of Theorem \ref{thm:finite_orbit} and Corollary \ref{cor:aniso} over $K_1$ and $K_3$.
We conclude this section with an example of the above over $\mbb{Q}(i)$.  The number
$$
z=\sqrt{3}e^{2\pi i/5}=[1+2i; -1+i,  -3, 2+2i, -1+3i, -2, -2i, 2+2i, 3-i, -2+2i, \ldots]
$$
is a zero of the anisotropic indefinite integral binary Hermitian form $H(z,w)=|z|^2-3|w|^2$.  Data from 10,000 iterations of the continued fraction algorithm and bounds from Corollary \ref{cor:bounds} are:
\begin{align*}
\max_{0\leq n<10000}\{|a_n|\}&=4.47213\ldots\leq 7.22749\ldots,\\ 
\min_{0\leq n<10000}\{|z_n|\}&=0.25201\ldots\geq0.15336\ldots, \\
\min_{0\leq n<10000}\{|q_n(q_nz-p_n)|\}&=0.28867\ldots\geq 0.00563\ldots.
\end{align*}
There are 64 distinct partial quotients $a_n$ and 56 distinct forms $H_n$.  The remainders $z_n$ are shown on the left in Figure \ref{fig:algexamples} below.  The remainders appear to have different densities along the arcs $Z(H_n)\cap V$, spending more time on the circles of radius $\sqrt{3}$ (show in black in Figure \ref{fig:algexamples}) than on the circles of radius $\sqrt{3}/2$ (shown in red in Figure \ref{fig:algexamples}).  The bound on the normalized error seems to be rather poor, but it is interesting to note that the minimum $0.28867\ldots$ above agrees with $\frac{1}{2\sqrt{3}}$ to high precision.
\begin{figure}[hbt]
\begin{center}
\includegraphics[scale=0.4]{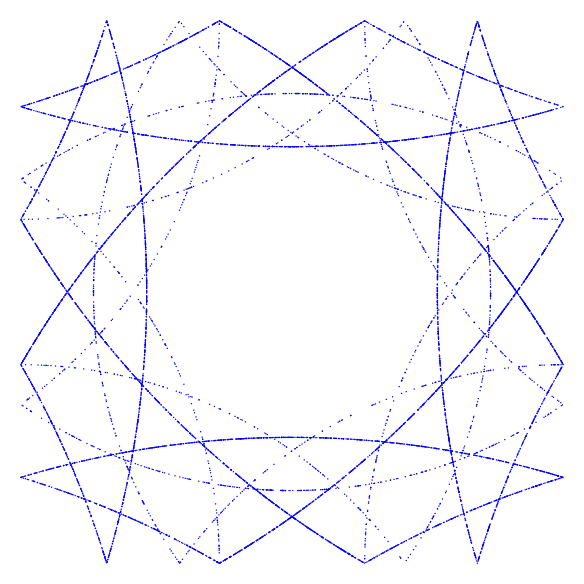}
\includegraphics[scale=0.4]{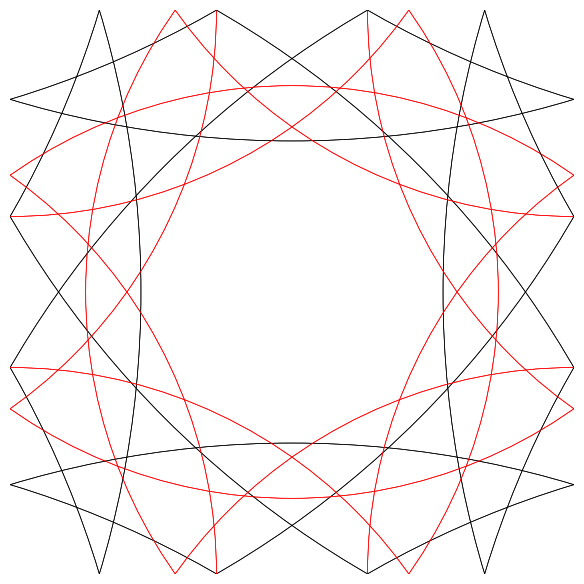}
\caption{The first 10,000 remainders $z_n$ of $z=\sqrt{3}e^{2\pi i/5}$ over $\mbb{Q}(i)$ (left) and the arcs $Z(H_n)\cap V$ (right).}
\label{fig:algexamples}
\end{center}
\end{figure}
%%%%%%%%%%%%%%%%%%%%%%%%%%%%%%%%%%%%%%%%%%%%%%%%%%%%%%%%%%%%%%%%%%%%%%%%%%%%%%%%%%%%%%%%%%%%%%
%%%%%%%%%%%%%%%%%%%%%%%%%%%%%%%%%%%%%%%%%%%%%%%%%%%%%%%%%%%%%%%%%%%%%%%%%%%%%%%%%%%%%%%%%%%%%%
%%%%%%%%%%%%%%%%%%%%%%%%%%%%%%%%%%%%%%%%%%%%%%%%%%%%%%%%%%%%%%%%%%%%%%%%%%%%%%%%%%%%%%%%%%%%%%
%%%%%%%%%%%%%%%%%%%%%%%%%%%%%%%%%%%%%%%%%%%%%%%%%%%%%%%%%%%%%%%%%%%%%%%%%%%%%%%%%%%%%%%%%%%%%%
%%%%%%%%%%%%%%%%%%%%%%%%%%%%%%%%%%%%%%%%%%%%%%%%%%%%%%%%%%%%%%%%%%%%%%%%%%%%%%%%%%%%%%%%%%%%%%
\section*{Badly approximable circles over arbitrary imaginary quadratic fields via the Dani correspondence}

The fact that all of the points on a $K$-rational circle without rational points are badly approximable holds for \textit{any} imaginary quadratic $K$, not only those that are Euclidean.  We no longer have continued fractions at our disposal, but we have a characterization of badly approximable $z$ obtained from consideration of a geodesic trajectory in $SL_2(\mc{O})\backslash\mbb{H}^3$ ``aimed'' at $z$, a variation of a result of Dani (\cite{Da3}).  This is a justification for the intuitive equivalence of ``badly approximable'' and ``staying out of the cusps'' (or ``bounded partial quotients'' from continued fractions).

If $K$ has class number $h(K)$, then $SL_2(\mc{O})\backslash\mbb{H}^3$ has $h$ cusps, i.e. there are $h$ orbits for the action of $SL_2(\mc{O})$ on $P^1(K)$ (\cite{EGM} Chapter 7, Theorem 2.4).  It therefore makes sense to discuss approximating a complex number with rationals representing different ideal classes.  However, we will continue to use the notion of badly approximable in the form
$$
\text{there exists } C'>0 \text{ such that for all } p/q\in K, \ |z-p/q|\geq C'/|q|^2.
$$

We begin with a brief discussion of the spaces under consideration.  The group $SL_2(\mbb{C})$ acts transitively on three dimensional hyperbolic space $\mbb{H}^3$ (as a subset of the real quaternions) via fractional linear transformations:
$$
\mbb{H}^3=\{\zeta=z+tj : z\in\mbb{C}, \ 0<t\in\mbb{R}\}, \ g\cdot \zeta=(a\zeta+b)(c\zeta+d)^{-1}.
$$
The stabilizer of $j\in\mbb{H}^3$ is $SU_2(\mbb{C})$, double covering $SO_3(\mbb{R})\cong SU_2(\mbb{C})/\{\pm1\}$.  Using the basepoint $j\in\mbb{H}^3$ (and some choice of frame at $j$, of which we will have no need), we make the following identifications:
\begin{align*}
&SL_2(\mbb{C})/SU_2(\mbb{C})&\mbb{H}^3,\\
&PSL_2(\mbb{C})&\text{oriented orthonormal frame bundle of } \mbb{H}^3,\\
&SL_2(\mc{O})\backslash SL_2(\mbb{C})/SU_2(\mbb{C})&\text{the Bianchi orbifold } SL_2(\mc{O})\backslash\mbb{H}^3,\\
&SL_2(\mc{O})\backslash SL_2(\mbb{C})&\text{oriented orthonormal frame bundle of } SL_2(\mc{O})\backslash\mbb{H}^3.\\
\end{align*}
%We have the decomposition
%$$
%PSL_2(\mbb{C})\cong
%\left\{
%\left(
%\begin{array}{cc}
%1&z\\
%0&1\\
%\end{array}
%\right) : z\in\mbb{C}
%\right\}
%\times
%\left\{
%\left(
%\begin{array}{cc}
%e^{t/2}&0\\
%0&e^{-t/2}\\
%\end{array}
%\right) : t\in\mbb{R}
%\right\}
%\times
%(SU_2(\mbb{C})/\{\pm1\}),
%$$
For example,
$$
\left(
\begin{array}{cc}
1&z\\
0&1\\
\end{array}
\right) 
\left(
\begin{array}{cc}
e^{t/2}&0\\
0&e^{-t/2}\\
\end{array}
\right)SU_2(\mbb{C}) \ \leftrightarrow \ z+e^tj.
$$
We also note that the projection $SL_2(\mc{O})\backslash SL_2(\mbb{C})\to SL_2(\mc{O})\backslash\mbb{H}^3$ is proper and closed.

The following theorem characterizes badly approximable numbers by the boundedness of a (framed) geodesic trajectory.
\begin{theorem}[Dani correspondence]\label{thm:dani}
For $z\in\mbb{C}$, define
$$
\Omega_z=\left\{
\left(
\begin{array}{cc}
1&z\\
0&1\\
\end{array}
\right) 
\left(
\begin{array}{cc}
e^{-t}&0\\
0&e^t\\
\end{array}
\right) : t\geq0
\right\}\subseteq SL_2(\mbb{C}).
$$
The trajectories
\begin{align*}
&SL_2(\mc{O})\cdot\Omega_z\subseteq SL_2(\mc{O})\backslash SL_2(\mbb{C}),\\
\omega_z:=&SL_2(\mc{O})\cdot\Omega_z\cdot SU_2(\mbb{C})\subseteq SL_2(\mc{O})\backslash\mbb{H}^3,
\end{align*}
are precompact if and only if $z$ is badly approximable.
\end{theorem}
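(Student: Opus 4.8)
The strategy is the standard Dani correspondence argument carried over from $\mbb{R}$ to $\mbb{C}$ with $\mbb{Q}$ replaced by $K$: translate precompactness of the geodesic ray into a statement about short vectors in an associated family of $\mbb{Z}$-lattices via Mahler's compactness criterion, and then recognize that statement as bad approximability. First I would reduce to the frame bundle. One has $\omega_z=\pi(SL_2(\mc{O})\cdot\Omega_z)$, where $\pi\colon SL_2(\mc{O})\backslash SL_2(\mbb{C})\to SL_2(\mc{O})\backslash\mbb{H}^3$ is the projection, which is proper and closed between Hausdorff spaces. Hence precompactness of $SL_2(\mc{O})\cdot\Omega_z$ implies precompactness of $\omega_z$ by continuity, and conversely $\overline{SL_2(\mc{O})\cdot\Omega_z}\subseteq\pi^{-1}(\overline{\omega_z})$, whose right-hand side is compact by properness; so the two precompactness assertions in the theorem are equivalent and it suffices to characterize precompactness of $SL_2(\mc{O})\cdot\Omega_z$ inside $SL_2(\mc{O})\backslash SL_2(\mbb{C})$.

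Next I would identify $SL_2(\mc{O})\backslash SL_2(\mbb{C})$ with a family of lattices. Viewing $\mbb{C}^2$ as $\mbb{R}^4$ and letting $SL_2(\mbb{C})$ act on row vectors on the right, the assignment $SL_2(\mc{O})g\mapsto\mc{O}^2g$ identifies $SL_2(\mc{O})\backslash SL_2(\mbb{C})$ with a set of rank-$4$ $\mbb{Z}$-lattices in $\mbb{R}^4$, all of the same covolume since $\det_{\mbb{R}}(g)=|\det_{\mbb{C}}(g)|^2=1$; by standard reduction theory this is a proper embedding (see, e.g., \cite{EGM}), so Mahler's criterion applies: a family $\mc{F}\subseteq SL_2(\mc{O})\backslash SL_2(\mbb{C})$ is precompact if and only if there is $\varepsilon>0$ with $\|v\|\geq\varepsilon$ for every nonzero $v\in\mc{O}^2g$ and every $SL_2(\mc{O})g\in\mc{F}$. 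A direct computation shows that for $g=\left(\begin{smallmatrix}1&z\\0&1\end{smallmatrix}\right)\left(\begin{smallmatrix}e^{-t}&0\\0&e^{t}\end{smallmatrix}\right)$ a nonzero vector of $\mc{O}^2g$ is $(qe^{-t},(qz-p)e^{t})$ for some $(p,q)\in\mc{O}^2\setminus\{0\}$, of squared norm $N(p,q,t)=|q|^2e^{-2t}+|qz-p|^2e^{2t}$. Thus $SL_2(\mc{O})\cdot\Omega_z$ is precompact if and only if $\inf\{N(p,q,t):t\geq0,\ (p,q)\in\mc{O}^2\setminus\{0\}\}>0$, and one keeps in mind that $|q|\geq1$ for $q\neq0$ because $1$ is the least nonzero norm in $\mc{O}$.

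It remains to match this with bad approximability. If $z$ is badly approximable, say $|z-p/q|\geq C'/|q|^2$ for all $p/q\in K$, then $|qz-p|\geq C'/|q|$ whenever $q\neq0$, so $N(p,q,t)\geq 2|q|\,|qz-p|\geq 2C'$ by the arithmetic--geometric mean inequality, while $N(p,0,t)=|p|^2e^{2t}\geq1$; hence $\inf N\geq\min\{2C',1\}>0$ and the trajectory is precompact. Conversely, if $z$ is not badly approximable: when $z=p_0/q_0\in K$ we have $N(p_0,q_0,t)=|q_0|^2e^{-2t}\to0$ as $t\to\infty$; when $z\notin K$, for each $n$ there is $(p_n,q_n)\in\mc{O}^2$ with $q_n\neq0$ and $0<|q_nz-p_n|<1/(n|q_n|)\leq 1/n$, and choosing $t_n\geq0$ with $e^{2t_n}=|q_n|/|q_nz-p_n|$ (which is $\geq1$ since $|q_nz-p_n|<1\leq|q_n|$) gives $N(p_n,q_n,t_n)=2|q_n|\,|q_nz-p_n|<2/n\to0$; in either case $SL_2(\mc{O})\cdot\Omega_z$ is not precompact. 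The step I expect to require the most care is the lattice identification in the second paragraph, namely that the classical Mahler compactness criterion on the space of unimodular lattices still governs $SL_2(\mc{O})\backslash SL_2(\mbb{C})$ for $\mc{O}$ the ring of integers of an arbitrary imaginary quadratic field; this is standard but should be cited precisely. Beyond that, the only subtlety is that $\Omega_z$ is a one-sided ray, so in the converse direction the norm-minimizing $t$ must be checked to lie in $[0,\infty)$, which is exactly where $|q|\geq1$ is used.
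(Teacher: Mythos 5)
Your proposal is correct and follows essentially the same route as the paper: reduce precompactness of the base trajectory to the frame-bundle trajectory via properness of the projection, apply Mahler's compactness criterion for $SL_2(\mc{O})\backslash SL_2(\mbb{C})$ (proved, as you anticipate, by restriction of scalars to a closed embedding into $SL_4(\mbb{Z})\backslash SL_4(\mbb{R})$), and match $\inf_{t,(p,q)}\bigl(|q|^2e^{-2t}+|qz-p|^2e^{2t}\bigr)>0$ with bad approximability. The only cosmetic difference is that you use AM--GM where the paper bounds the product of the two coordinates directly; your explicit handling of $z\in K$ and of the constraint $t\geq0$ matches what the paper does implicitly.
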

The Dani correspondence as stated above follows from a version of Mahler's compactness criterion.
\begin{theorem}[Mahler's compactness criterion]\label{mahler}
Let $\Omega\subseteq SL_2(\mbb{C})$.  The set $SL_2(\mc{O})\cdot\Omega\subseteq SL_2(\mc{O})\backslash SL_2(\mbb{C})$ is precompact if and only if
$$
\inf\{\|Xg\|_2 : g\in\Omega, \ X=(x_1,x_2)\in\mc{O}^2\setminus\{(0,0)\}\}>0,
$$
where $\|X\|_2=\sqrt{|x_1|^2+|x_2|^2}$.
\end{theorem}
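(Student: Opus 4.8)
The plan is to identify the quantity $F(g):=\inf\{\|Xg\|_2 : X\in\mc{O}^2\setminus\{(0,0)\}\}$ with the length of the shortest nonzero vector of the lattice $L_g:=\mc{O}^2 g\subseteq\mbb{C}^2\cong\mbb{R}^4$, and then run the usual Mahler argument in this setting. First I would record that $F$ is invariant under $g\mapsto\gamma g$ for $\gamma\in SL_2(\mc{O})$ (since $\gamma$ permutes $\mc{O}^2$) and under $g\mapsto gk$ for $k\in SU_2(\mbb{C})$ (since $k$ is a real isometry of $\mbb{R}^4$), so that it descends to continuous, everywhere-positive functions $\bar F$ on $SL_2(\mc{O})\backslash SL_2(\mbb{C})$ and $\hat F$ on the Bianchi orbifold $SL_2(\mc{O})\backslash\mbb{H}^3$; positivity holds because each $L_g$ is a genuine lattice, and continuity is the standard observation that on a compact neighborhood of any $g_0$ only the finitely many $X\in\mc{O}^2$ with $\|X\|$ below an explicit bound can compete for the infimum. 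The hypothesis of the theorem is exactly that $\bar F$ is bounded below by a positive constant on the image of $\Omega$. The forward implication is then immediate: if $SL_2(\mc{O})\cdot\Omega$ is precompact with compact closure $\mathcal{K}$, the continuous positive function $\bar F$ attains a positive minimum $m$ on $\mathcal{K}$, so $\inf_\Omega F\ge m>0$.

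For the converse, suppose $\inf_\Omega F=\epsilon>0$ and write $\hat\Omega$ for the image of $\Omega$ in $SL_2(\mc{O})\backslash\mbb{H}^3$. Since the projection $SL_2(\mc{O})\backslash SL_2(\mbb{C})\to SL_2(\mc{O})\backslash\mbb{H}^3$ is proper (noted in the text), it suffices to show $\hat\Omega$ is precompact, and for this it suffices to show that the closed set $\{\hat F\ge\epsilon\}$ is compact, as $\hat\Omega$ lies in it. Here I would invoke reduction theory for the Bianchi groups: a fundamental domain for $SL_2(\mc{O})$ on $\mbb{H}^3$ may be taken of the form $\mathcal{K}_0\cup\bigcup_{i=1}^{h}\sigma_i\mathcal{H}_i$, where $\mathcal{K}_0\subseteq\mbb{H}^3$ is compact, $\sigma_i\in SL_2(\mbb{C})$ carries $\infty$ to a cusp $\xi_i\in P^1(K)$ (with $\sigma_1=I$), and $\mathcal{H}_i=\{z+yj : z\in\mathcal{C}_i,\ y\ge Y_0\}$ is a horoball sector over a compact $\mathcal{C}_i\subseteq\mbb{C}$ (cf. \cite{EGM}, Chapter 7). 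Writing $\xi_i=[r_i:s_i]$ with $r_i,s_i\in\mc{O}$ and choosing $\sigma_i$ with first column proportional to $(r_i,s_i)^t$, the integral vector $(s_i,-r_i)$ satisfies $(s_i,-r_i)\sigma_i=(0,c_i)$ for some $c_i\in\mbb{C}^\times$. For a point $\sigma_i(z+yj)$ of the $i$th sector, represented in the frame bundle by $g=\sigma_i\left(\begin{smallmatrix}1&z\\0&1\end{smallmatrix}\right)\mathrm{diag}(\sqrt y,1/\sqrt y)$, one computes $(s_i,-r_i)g=(0,c_i/\sqrt y)\in L_g$, whence $\hat F(\sigma_i(z+yj))\le|c_i|/\sqrt y$. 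Thus $\hat F\ge\epsilon$ forces $y\le(|c_i|/\epsilon)^2$ in the $i$th sector, so $\{\hat F\ge\epsilon\}$ is contained in the image of the compact set $\mathcal{K}_0\cup\bigcup_i\sigma_i\{z+yj : z\in\mathcal{C}_i,\ Y_0\le y\le(|c_i|/\epsilon)^2\}$; being closed, it is itself compact. This completes the converse, and with it the theorem.

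The only genuinely non-formal ingredient is the shape of the fundamental domain invoked above, i.e. Bianchi reduction theory asserting that the orbifold is the union of a compact piece and finitely many standard horoball cusp neighborhoods; everything else is the routine dictionary between $g\in SL_2(\mbb{C})$, the lattice $\mc{O}^2 g$, and the height coordinate in $\mbb{H}^3$, together with properness of the frame-bundle projection. If a self-contained argument were wanted, this input could be replaced by a direct geometry-of-numbers argument (Minkowski's convex body theorem applied to $L_g$: absence of a short vector controls the successive minima against the fixed covolume and forces $g$, modulo $SL_2(\mc{O})$, into a bounded region), essentially transporting the classical proof of Mahler's criterion from $SL_n(\mbb{Z})$ to $SL_2(\mc{O})$; but citing reduction theory is cleaner and fits the rest of the paper.
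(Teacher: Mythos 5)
Your proof is correct, but it takes a genuinely different route from the paper's. The paper proves Theorem \ref{mahler} by restriction of scalars: choosing an integral basis $1,\omega$ of $\mc{O}$, it constructs a homomorphism $\Phi:SL_2(\mbb{C})\to SL_4(\mbb{R})$ with $\Phi(SL_2(\mbb{C}))\cap SL_4(\mbb{Z})=\Phi(SL_2(\mc{O}))$, obtains a closed embedding $SL_2(\mc{O})\backslash SL_2(\mbb{C})\hookrightarrow SL_4(\mbb{Z})\backslash SL_4(\mbb{R})$, verifies that $\|\cdot\|_2$ on $\mbb{C}^2$ and $\|\Psi(\cdot)\|_2$ on $\mbb{R}^4$ are equivalent norms, and then quotes the classical Mahler criterion for $GL_n(\mbb{Z})\backslash GL_n(\mbb{R})$ (the determinant condition being automatic on $SL$). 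You instead argue intrinsically: the shortest-vector function of the lattice $\mc{O}^2g$ descends to a continuous positive function on the quotient, which gives the forward implication at once, and the converse is handled by the cusp decomposition of the Bianchi orbifold (compact core plus finitely many horoball sectors), in each of which the explicit integral vector $(s_i,-r_i)$ produces a lattice vector of length $|c_i|/\sqrt{y}$ and so forces the height $y$ to be bounded wherever short vectors are excluded; your computation here is correct in the paper's row-vector convention, and the passage back to the frame bundle via properness of the projection is exactly as the paper uses it elsewhere. Both arguments outsource the real content to a citation of comparable weight --- classical Mahler over $\mbb{Z}$ versus reduction theory for $SL_2(\mc{O})\backslash\mbb{H}^3$. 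The paper's route has the virtue of needing nothing about the geometry of the Bianchi orbifolds; yours stays inside hyperbolic $3$-space and makes explicit the dictionary ``short lattice vectors $\Leftrightarrow$ high in a cusp,'' which is precisely the intuition the surrounding section appeals to, and it localizes the failure of compactness cusp by cusp (one sector per ideal class) rather than hiding it in $SL_4(\mbb{Z})\backslash SL_4(\mbb{R})$.
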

For completeness, we give a proof of Theorem \ref{mahler} at the end of the section.  We now give a proof of the Dani correspondence as stated in Theorem \ref{thm:dani}.
\begin{proof}[Proof of Dani correspondence]
For the proof, note that
$$
\mc{O}^2\cdot\Omega_z=\{(e^{-t}q,e^t(p+qz)) :t\geq0, \  (q,p)\in\mc{O}^2\}.
$$
Suppose $z$ is badly approximable with $|q(qz+p)|\geq C'$ for all $p/q\in K$.  If there exists $t\geq0$ and $p/q\in K$ with $\|(e^{-t}q,e^t(qz+p))\|_2<\sqrt{C'}$, then taking the product of the coordinates gives
$$
|e^{-t}qe^t(qz+p)|=|q(qz+p)|<C',
$$
a contradiction.  Hence $\inf\{\|Xg\|_2 : g\in\Omega_z, \ X\in\mc{O}^2\setminus\{(0,0)\}\}\geq\sqrt{C'}$ and $SL_2(\mc{O})\cdot\Omega_z$ is precompact by Mahler's criterion.

If $z$ is not badly approximable, then for every $n>0$ there exists $p_n/q_n\in K$ such that $|q_n(q_nz+p_n)|<1/n^2$.  If $t_n$ is such that $e^{-t_n}|q_n|=1/n$, then $|e^{t_n}(q_nz+p_n)|=n|q_n(q_nz+p_n)|<1/n$ and $\|(e^{-t_n}q_n,e^{t_n}(q_nz+p_n))\|_2\leq\sqrt{2}/n$.  Therefore $SL_2(\mc{O})\cdot\Omega_z$ is not contained in any compact set by Mahler's criterion.  The result for $\omega_z$ follows from the invariance $\|Xk\|_2=\|X\|_2$ for $X\in\mbb{C}^2$, $k\in SU_2(\mbb{C})$, and the fact that the projection $SL_2(\mc{O})\backslash SL_2(\mbb{C})\to SL_2(\mc{O})\backslash\mbb{H}^3$ is proper and closed.
\end{proof}

One obvious way for the trajectory $\omega_z$ to remain bounded is for it to be asymptotic to a compact object, such as a compact geodesic surface.  The following lemma provides a plethora of compact geodesic surfaces in the Bianchi orbifolds $SL_2(\mc{O})\backslash\mbb{H}^3$.

%\begin{lemma}[\cite{MR}, Lemma 9.6.1]\label{lemma:cmpct}
%Let $S$ be a geodesic plane in $\mbb{H}^3$, whose boundary at infinity is given by the zero set $Z(H)$ of the indefinite binary Hermitian form $H$.  The image of $S$ in $SL_2(\mc{O})\backslash\mbb{H}^3$ is closed if and only if $H$ is a multiple of a form with coefficients in $K$, and compact if and only if H is anisotropic.
%
%In other words, a maximal non-elementary Fuchsian subgroup of $SL_2(\mc{O})$ is the stabilizer of a $K$-rational circle and this Fuchsian group is cocompact if and only if this circle contains no point of $K$.
%\end{lemma}
\begin{lemma}[\cite{MR}, \S9.6]\label{lemma:cmpct}
Let $H$ be an integral indefinite binary Hermitian form.  The orientation preserving stabilizer of the zero set $Z(H)$,
$$
\text{Stab}^+(Z(H))=\left\{g\in SL_2(\mc{O}) :
\begin{array}{c}
g\cdot Z(H)=Z(H) \text{ and }\\
g \text{ preserves the components of } P^1(\mbb{C})\setminus Z(H)
\end{array}\right\},
$$
is a maximal non-elementary Fuchsian subgroup of $SL_2(\mc{O})$.  If $H$ is anisotropic, then $\text{Stab}^+(Z(H))$ is cocompact.  In particular, if $P(H)\subseteq\mbb{H}^3$ is the geodesic plane with boundary $Z(H)$, then the image of $P(H)$ in $SL_2(\mc{O})\backslash\mbb{H}^3$ is compact for anisotropic $H$.
\end{lemma}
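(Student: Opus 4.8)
The plan is to recognize $\text{Stab}^+(Z(H))$ as the integral points of an arithmetically defined special unitary group and then feed it into the standard structure theory of arithmetic lattices in $SL_2(\mbb{R})$. First I would pin the group down. By the equivariance $g\cdot Z(H)=Z({}^{g}H)$ from the lemma above, together with the fact that an indefinite binary Hermitian form is determined up to a nonzero real scalar by its zero circle, the condition $g\cdot Z(H)=Z(H)$ for $g\in SL_2(\mbb{C})$ forces ${}^{g}H=\lambda H$ with $\lambda\in\mbb{R}^{\times}$; since ${}^{g}H$ and $H$ have the same determinant, $\lambda^{2}=1$, and preserving the two components of $P^{1}(\mbb{C})\setminus Z(H)$ selects $\lambda=+1$ (an element swapping the components gives $\lambda=-1$). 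Hence $\text{Stab}^+(Z(H))=\{g\in SL_2(\mc{O}):g^{*}M_Hg=M_H\}$, where $M_H$ is the Hermitian matrix of $H$; that is, it is the group of $\mc{O}$-points of the special unitary group $\mathbf{G}=SU(H)$ of the Hermitian $K$-space $(K^{2},H)$. I would also observe that the component-preserving condition is exactly the requirement that $g$ restrict to an orientation-preserving isometry of the geodesic plane $P(H)$ (since $g\in SL_2(\mc{O})$ is orientation preserving on $\mbb{H}^{3}$, swapping the two sides of $\mbb{H}^{3}\setminus P(H)$ is the same as reversing orientation on $P(H)$), so $\text{Stab}^+(Z(H))$ is a Fuchsian group from the outset.

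Because $A,C\in\mbb{Z}$ and $B\in\mc{O}$, $\mathbf{G}$ is a semisimple $\mbb{Q}$-group (of type $A_1$), and as $H$ is indefinite while $K\otimes_{\mbb{Q}}\mbb{R}=\mbb{C}$, the matrix $M_H$ has signature $(1,1)$, so $\mathbf{G}(\mbb{R})\cong SU(1,1)\cong SL_2(\mbb{R})$; a Cayley transform conjugates $\text{Stab}^+(Z(H))$ onto a discrete subgroup of $SL_2(\mbb{R})$ preserving $P(H)$. By the Borel--Harish-Chandra theorem the arithmetic subgroup $\text{Stab}^+(Z(H))=\mathbf{G}(\mbb{Q})\cap SL_2(\mc{O})$ is a lattice in $\mathbf{G}(\mbb{R})$; in particular it has finite covolume on $P(H)$, hence is of the first kind with limit set all of $Z(H)$, and it is non-elementary (elementary Fuchsian groups are virtually cyclic and have infinite covolume). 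For maximality I would argue as follows: if $\Gamma\leq SL_2(\mc{O})$ is a non-elementary Fuchsian group with $\Gamma\supseteq\text{Stab}^+(Z(H))$, then $\Gamma$ preserves a geodesic plane $P'$, so its limit set lies on the circle $\partial P'$; but that limit set contains $Z(H)=\partial P(H)$, and a round circle contained in another equals it, whence $P'=P(H)$ and $\Gamma$ stabilizes $Z(H)$. Being Fuchsian, $\Gamma$ acts orientation-preservingly on $P(H)$, hence preserves the two components of $P^{1}(\mbb{C})\setminus Z(H)$, so $\Gamma\subseteq\text{Stab}^+(Z(H))$ and the two coincide.

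For cocompactness when $H$ is anisotropic, I would invoke Godement's criterion: $\mathbf{G}(\mbb{Z})$ is cocompact in $\mathbf{G}(\mbb{R})$ iff $\mathbf{G}$ is anisotropic over $\mbb{Q}$, equivalently iff $\mathbf{G}(\mbb{Q})$ contains no nontrivial unipotent, equivalently iff $H$ represents $0$ only trivially over $K$ --- which is exactly the anisotropy hypothesis. A more hands-on route avoiding algebraic groups: a lattice in $SL_2(\mbb{R})$ is cocompact iff it has no parabolic element; a parabolic $\gamma\in SL_2(\mc{O})$ has fixed point $\xi\in P^{1}(K)$ (equal to $\infty$, or the double root $(a-d)/2c$ of $c\xi^{2}+(d-a)\xi-b$, using $(a+d)^{2}=4$), and if $\gamma\in\text{Stab}^+(Z(H))$ then $\xi\in Z(H)$ (the fixed point of a parabolic preserving a circle must lie on it), forcing $H(\xi,1)=0$ with $\xi\in K$ and contradicting anisotropy; hence no parabolics and the group is cocompact. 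Either way $\text{Stab}^+(Z(H))\backslash P(H)$ is compact, and since the full setwise stabilizer of $P(H)$ in $SL_2(\mc{O})$ contains $\text{Stab}^+(Z(H))$ with index at most $2$, the image of $P(H)$ in $SL_2(\mc{O})\backslash\mbb{H}^{3}$ --- namely $\text{Stab}_{SL_2(\mc{O})}(P(H))\backslash P(H)$ --- is compact as well.

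The step I expect to be the main obstacle is the bridge between the geometry and the arithmetic: verifying carefully that $\mathbf{G}=SU(H)$ really is a $\mbb{Q}$-form of $SL_2$ with $\mathbf{G}(\mbb{Z})=\text{Stab}^+(Z(H))$ and $\mathbf{G}(\mbb{R})\cong SL_2(\mbb{R})$, and matching ``$H$ anisotropic over $K$'' with ``$\mathbf{G}$ anisotropic over $\mbb{Q}$''. Once that dictionary is in place, Borel--Harish-Chandra and Godement deliver the lattice property and cocompactness essentially for free; this is the content of \S9.6 of \cite{MR}.
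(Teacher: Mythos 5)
Your argument is correct and is essentially the one the paper is invoking: the paper gives no proof of its own, deferring entirely to \cite{MR} \S 9.6, and what you have written (identifying $\text{Stab}^+(Z(H))$ with the $\mc{O}$-points of the special unitary group of $H$, viewed as a $\mbb{Q}$-form of $SL_2$ with real points $SU(1,1)\cong SL_2(\mbb{R})$, then applying Borel--Harish-Chandra for the lattice property and Godement's criterion, or equivalently the absence of parabolics, for cocompactness) is precisely the content of that reference. The dictionary you flag as the main obstacle --- anisotropy of $H$ over $K$ versus $\mbb{Q}$-anisotropy of $SU(H)$ --- is handled correctly, and your more elementary no-parabolics route is a valid substitute for Godement.
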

Together, Lemma \ref{lemma:cmpct} and Theorem \ref{thm:dani} imply the following.

\begin{theorem}\label{thm:badapproxcirc}
Let $K$ be any imaginary quadratic field and $H$ an anisotropic indefinite $K$-rational binary Hermitian form, i.e.
$$
H=Az\bar{z}-B\bar{z}w-\overline{B}z\bar{w}+Cw\bar{w}, \ A,C\in\mbb{Q}, \ B\in K, \ 0<-\Delta(H)=B\overline{B}-AC\not\in N^K_{\mbb{Q}}(K).
$$
If $z\in\mbb{C}$, $H(z,1)=0$, then $z$ is badly approximable over $K$, i.e. there exists $C'>0$ such that for all $p/q\in K$, we have $|z-p/q|\geq C'/|q|^2$.
\end{theorem}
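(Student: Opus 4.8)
The plan is to apply the Dani correspondence: by Theorem \ref{thm:dani} it suffices to show that the trajectory $\omega_z=\pi(R_z)\subseteq SL_2(\mc{O})\backslash\mbb{H}^3$ is precompact, where $\pi\colon\mbb{H}^3\to SL_2(\mc{O})\backslash\mbb{H}^3$ is the quotient map and $R_z=\{z+e^{-2t}j:t\geq0\}$ is the vertical geodesic ray limiting to the boundary point $z$. First I would reduce to the integral case: replacing $H$ by $\lambda H$ for a suitable $\lambda\in\mbb{Q}_{>0}$ clears denominators without altering $Z(H)$, and since $\Delta(\lambda H)=\lambda^2\Delta(H)$ while the image of $N^K_{\mbb{Q}}$ is a subgroup of $\mbb{Q}^{\times}$ containing every square, the hypothesis $-\Delta\notin N^K_{\mbb{Q}}(K)$ is preserved; indefiniteness is preserved since $\lambda^2>0$. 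So we may assume $H$ is an anisotropic integral indefinite binary Hermitian form, and Lemma \ref{lemma:cmpct} applies: the image $\pi(P(H))$ of the geodesic plane $P(H)\subseteq\mbb{H}^3$ with boundary $Z(H)$ is compact.

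The crux is the geometric claim that $R_z$ stays within bounded hyperbolic distance of $P(H)$, i.e.\ $\sup_{t\geq0}d_{\mbb{H}^3}(z+e^{-2t}j,\,P(H))<\infty$. Since $H(z,1)=0$ we have $z\in Z(H)=\partial_\infty P(H)$, so this instance of the general principle that a geodesic ray asymptotic to an ideal boundary point of a totally geodesic plane remains boundedly close to that plane. I would prove it by normalizing: choose $m\in SL_2(\mbb{C})$ with $m(z)=\infty$; as an isometry of $\mbb{H}^3$ it carries $P(H)$ to the geodesic plane spanned by $m(Z(H))$, which is a Euclidean line $L\subseteq\mbb{C}$, hence a vertical half-plane $\widetilde{P}=\{w+sj:w\in L,\ s>0\}$, and it carries $R_z$ to the upper portion $\{w_0+sj:s\geq s_0\}$ of a vertical geodesic, for some $w_0\in\mbb{C}$ and $s_0>0$ (the endpoint $z$ maps to $\infty$, and the endpoint $z+j$ maps to the interior point $w_0+s_0j$). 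Writing $\delta=\operatorname{dist}_{\mbb{C}}(w_0,L)$, a short computation in the upper half-space model yields $d_{\mbb{H}^3}(w_0+sj,\widetilde{P})=\operatorname{arcsinh}(\delta/s)\leq\operatorname{arcsinh}(\delta/s_0)$ for all $s\geq s_0$, which (since $m$ is an isometry) proves the claim.

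Finally, as $\pi$ is distance non-increasing, $\omega_z=\pi(R_z)$ lies in a bounded neighborhood of the compact set $\pi(P(H))$; and since $SL_2(\mc{O})\backslash\mbb{H}^3$ is a complete, locally compact length space (a quotient of $\mbb{H}^3$ by a properly discontinuous isometric action), its closed balls are compact, so bounded neighborhoods of compact sets are precompact. Hence $\omega_z$ is precompact, and Theorem \ref{thm:dani} gives that $z$ is badly approximable over $K$. I expect the geometric claim of the second paragraph to be the only genuinely non-formal step; the remainder is bookkeeping around the two quoted results, Lemma \ref{lemma:cmpct} and Theorem \ref{thm:dani}.
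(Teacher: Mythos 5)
Your proof is correct and follows essentially the same route as the paper: the Dani correspondence (Theorem \ref{thm:dani}) combined with compactness of $\pi(P(H))$ from Lemma \ref{lemma:cmpct} and the observation that the vertical geodesic ray at $z$ stays within bounded distance of the plane $P(H)$, hence projects to a bounded trajectory. You in fact supply two details the paper leaves implicit --- the rescaling of the $K$-rational form to an integral one before invoking Lemma \ref{lemma:cmpct}, and the explicit $\operatorname{arcsinh}(\delta/s)$ computation verifying that a ray asymptotic to an ideal point of the plane remains boundedly close to it.
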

\begin{proof}
Let $\pi$ denote the projection $\pi:\mbb{H}^3\to SL_2(\mc{O})\backslash\mbb{H}^3$.  The circle $Z(H)$ is the boundary at infinity of a geodesic plane $P(H)$ (a hemisphere orthogonal to $\mbb{C}\subseteq\partial\mbb{H}^3$).  The geodesic ray
$$
\Omega_z\cdot SU_2(\mbb{C})=\{z+e^{-2t}j : 0\leq t<\infty\}
$$
is asymptotic to $P(H)$ as they share the point $z$ at infinity.  Since $\pi$ does not increase distances and $\pi(P(H))$ is compact by Lemma \ref{lemma:cmpct}, $\pi(\Omega_z\cdot SU_2(\mbb{C}))=\omega_z$ is bounded in $SL_2(\mc{O})\backslash \mbb{H}^3$.  By  Theorem \ref{thm:dani}, $z$ is badly appoximable.
\end{proof}

%That complex numbers in the zero set of an anisotropic integral binary Hermitian form $H$ are badly approximable can also be found in \cite{Vu2}, Theorem 1.1, where it is shown that approximation constants for these badly approximable numbers can be expressed in terms of the determinant $\Delta(H)$ and the minimum $\mu(H):=\min\{|H(z,w)| : (z,w)\in\mc{O}^2\setminus\{(0,0)\}\}$.  In particular, for $z\in Z(H)$ the inequality
%$$
%|z-p/q|\leq C\frac{N(p,q)}{|q|^2}, \ p/q\in K,
%$$
%has infinitely many solutions for $C\geq\frac{\mu(H)}{2\sqrt{-\Delta}}$ but only finitely many solutions for $C<\frac{\mu(H)}{2\sqrt{-\Delta}}$, where $N(p,q)$ is the norm of the ideal generated by $p,q\in\mc{O}$.

We now give a proof of Mahler's criterion Theorem \ref{mahler} in this setting, reducing it to the following standard version over $\mbb{Z}$.

\begin{theorem}[\cite{M}, Theorem 2]\label{thm:mahlerorig}
Let $\Omega\subseteq GL_n(\mbb{R})$.  Then $GL_n(\mbb{Z})\cdot\Omega\subseteq GL_n(\mbb{Z})\backslash GL_n(\mbb{R})$ is precompact if and only if the following two conditions are satisfied:
$$
\sup\{|\det(g)| : g\in \Omega\}<\infty, \ \inf\{\|Xg\|_2 : g\in\Omega, X\in\mbb{Z}^n\}>0,
$$
where $\|X\|_2=\sqrt{x_1^2+\ldots+x_n^2}$ for $X=(x_1,\ldots,x_n)\in\mbb{R}^n$.  In other words, the lattices generated by the rows of elements of $\Omega$ must have bounded covolume and no arbitrarily short vectors.
\end{theorem}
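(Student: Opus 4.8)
The plan is to identify the homogeneous space $GL_n(\mbb{Z})\backslash GL_n(\mbb{R})$ with the space of full-rank lattices in $\mbb{R}^n$, via $g\mapsto\Lambda_g:=\mbb{Z}^n g$, the rows of $g$ furnishing a basis of $\Lambda_g$. Two matrices $g,g'$ lie in the same left coset precisely when $\Lambda_g=\Lambda_{g'}$, since a change of lattice basis is exactly left multiplication by an element of $GL_n(\mbb{Z})$. Under this dictionary the covolume of $\Lambda_g$ equals $|\det g|$, so the first condition reads ``covolume bounded above'' and the second reads ``the shortest nonzero vector $\lambda_1(\Lambda_g):=\inf_{0\neq X\in\mbb{Z}^n}\|Xg\|_2$ is bounded below,'' i.e. no arbitrarily short vectors. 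I would prove the two implications separately.

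For necessity I would observe that both $g\mapsto|\det g|$ and $g\mapsto\lambda_1(\Lambda_g)$ are $GL_n(\mbb{Z})$-invariant: the former because $\det$ takes values $\pm1$ on $GL_n(\mbb{Z})$, the latter because $X\mapsto X\gamma$ permutes $\mbb{Z}^n$ for $\gamma\in GL_n(\mbb{Z})$. Hence both descend to functions on the quotient. The determinant descends to a continuous function, and $\lambda_1$ descends to a continuous, strictly positive function: continuity holds because near a given lattice only finitely many $X$ can compete for the minimum, and positivity because a discrete lattice possesses a genuine shortest nonzero vector. If $GL_n(\mbb{Z})\cdot\Omega$ is precompact, its closure is compact, so $|\det|$ is bounded on it (condition 1) and the positive function $\lambda_1$ attains a positive infimum on it (condition 2).

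For sufficiency, which I expect to be the crux, suppose $|\det g|\le M$ and $\lambda_1(\Lambda_g)\ge c>0$ for all $g\in\Omega$. I would control the full list of Minkowski successive minima $\lambda_1\le\cdots\le\lambda_n$ of each $\Lambda_g$. Minkowski's second theorem gives $\lambda_1\cdots\lambda_n\asymp_n\mathrm{covol}(\Lambda_g)=|\det g|$, so the product is $\le C_nM$; combined with $\lambda_i\ge\lambda_1\ge c$ this forces every $\lambda_i$ into a fixed interval $[c,C']$ with $C'=C_nM/c^{\,n-1}$. I would then pass from successive minima to an actual basis: each $\Lambda_g$ admits a reduced basis $v_1,\dots,v_n$ (for instance Minkowski-reduced) whose lengths are bounded above and below in terms of the $\lambda_i$ and whose Gram matrix is bounded away from degeneracy. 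The matrix $R_g$ with rows $v_1,\dots,v_n$ then lies in one fixed compact subset $\mc{K}\subseteq GL_n(\mbb{R})$ (bounded entries, determinant bounded away from $0$), and $R_g=\gamma_g\,g$ for some $\gamma_g\in GL_n(\mbb{Z})$, since $R_g$ and $g$ are two bases of the same lattice. Thus every coset $GL_n(\mbb{Z})\,g$ with $g\in\Omega$ meets the fixed compact set $\mc{K}$, proving precompactness.

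The delicate point, and the step I would treat most carefully, is the passage from successive minima to a genuine $\mbb{Z}$-basis lying in a compact set: for $n\ge5$ a system of $n$ independent vectors realizing the successive minima need not generate $\Lambda_g$, so one cannot simply use shortest independent vectors as rows. The honest route is to invoke Minkowski reduction theory, or equivalently the Iwasawa/Siegel-set description $GL_n(\mbb{R})=GL_n(\mbb{Z})\cdot K A_t N_u$, in which the bounds $c\le\lambda_i\le C'$ confine the diagonal torus part, the bound $|\det g|\le M$ fixes the overall scale, and the unipotent part is automatically bounded in a reduced domain; together these cut out the compact set $\mc{K}$. I would cite the reduction-theoretic fact that reduced bases have lengths comparable to the successive minima rather than reprove it, since it is exactly the classical content underlying the cited result.
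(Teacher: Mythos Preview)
The paper does not give its own proof of this theorem. It is quoted as a classical result (attributed to Mahler, \cite{M}, Theorem~2) and used as a black box: the subsequent \emph{Proof of Mahler's criterion} in the paper establishes Theorem~\ref{mahler} (the $SL_2(\mc{O})$ version) by embedding $SL_2(\mc{O})\backslash SL_2(\mbb{C})$ into $SL_4(\mbb{Z})\backslash SL_4(\mbb{R})$ via restriction of scalars and then invoking Theorem~\ref{thm:mahlerorig}. So there is no in-paper proof to compare your proposal against.

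That said, your sketch is the standard argument and is essentially correct. The necessity direction is clean: $|\det|$ and $\lambda_1$ descend to continuous functions on the quotient, the latter strictly positive, so both are controlled on any compact set. For sufficiency, your use of Minkowski's second theorem to bound all successive minima from $\lambda_1\ge c$ and $\lambda_1\cdots\lambda_n\asymp_n|\det g|\le M$ is exactly right, and you correctly flag the one genuine subtlety---that for $n\ge5$ vectors realizing the successive minima need not form a $\mbb{Z}$-basis---and resolve it by appealing to Minkowski (or Siegel) reduction, which does produce an honest basis with lengths comparable to the $\lambda_i$ and Gram matrix in a fixed compact set. One small point you might make explicit for completeness: the hypothesis $\lambda_1\ge c>0$ already forces $|\det g|\ge c'_n>0$ via Minkowski's first theorem, so the determinant is automatically bounded away from zero as well as from above, which is what keeps the reduced representatives inside $GL_n(\mbb{R})$ rather than drifting toward the boundary.
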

\begin{proof}[Proof of Mahler's criterion]
Choose an integral basis for $\mc{O}$, say $1$ and $\omega=\frac{D_K+\sqrt{D_K}}{2}$ for concreteness, where $D_K$ is the field disciminant,
$$
D_K=\left\{
\begin{array}{cc}
-d,&d\equiv3\bmod4\\
-4d,&d\equiv 1, 2\bmod4\\
\end{array}
\right..
$$
We have a homomorphism
$$
\phi:\mbb{C}\to M_2(\mbb{R}), \ z\mapsto\left(\begin{array}{cc}
r&s\\
s\frac{D_K(1-D_K)}{4}&r+sD_K\\
\end{array}
\right), \ z=r+s\omega,
$$
taking a complex number $z$ to the matrix of multiplication by $z$ in the our integral basis.  This extends to a homomorphism
$$
\Phi:SL_2(\mbb{C})\to SL_4(\mbb{R}), \ 
\left(\begin{array}{cc}
z_1&z_2\\
w_1&w_2\\
\end{array}\right)\mapsto
\left(\begin{array}{cc}
\phi(z_1)&\phi(z_2)\\
\phi(w_1)&\phi(w_2)\\
\end{array}\right),
$$
with $\Phi(SL_2(\mbb{C}))\cap SL_4(\mbb{Z})=\Phi(SL_2(\mc{O}))$.  Hence we obtain a closed embedding
$$
\widetilde{\Phi}:SL_2(\mc{O})\backslash SL_2(\mbb{C})\to SL_4(\mbb{Z})\backslash SL_4(\mbb{R}).
$$
One can easily verify that the bijection
$$
\Psi:\mbb{C}^2\to\mbb{R}^4, \ (a+b\omega,c+d\omega)\mapsto(a,b,c,d)
$$
is $SL_2(\mbb{C})$-equivariant, i.e.
$$
\Psi((a+b\omega,c+d\omega)g)=(a,b,c,d)\Phi(g), \ g\in SL_2(\mbb{C}),
$$
and that the norms $\|\Psi(\cdot)\|_2$, $\|\cdot\|_2$ are equivalent on $\mbb{C}^2$:
$$
R_+\|\Psi(\cdot)\|_2\leq\|\cdot\|_2\leq R_-\|\Psi(\cdot)\|_2,
$$
where
$$
R_{\pm}=\left(\frac{2}{1+|\omega|^2\pm|1+\omega^2|}\right)^{1/2}
$$
are the radii of the inscribed and circumscribed circles of the ellipse $|a+b\omega|^2=1$.  Applying the standard version of Mahler's criterion (Theorem \ref{thm:mahlerorig} above) to $\Phi(\Omega)$ gives the result.
\end{proof}

For a generalization of Mahler's criterion and the Dani correspondence tailored to simultaneous approximation over number fields which includes what is presented here, see \cite{EGL}.
\section*{Examples of badly approximable \textit{algebraic} numbers over any imaginary quadratic field}
In this section, we emphasize the fact that there are many \textit{algebraic} numbers satisfying the hypotheses of Corollary \ref{cor:aniso} and Theorem \ref{thm:badapproxcirc} and give a characterization of these numbers.

For $z$ such that $|z|^2=s/t\in\mbb{Q}$ is not a norm from $K$, the anisotropic integral form $\left(\begin{array}{cc}t&0\\0&-s\end{array}\right)$ shows that $z$ is badly approximable.  This essentially exhausts all of the examples we've given as we can translate an integral form by a rational to center it at zero, then clear denominators to obtain an integral form as described, i.e.
$$
H=\left(\begin{array}{cc}
A&-B\\
-\overline{B}&C\\
\end{array}\right),\ 
g=\left(\begin{array}{cc}
1&-B/A\\
0&1\\
\end{array}\right), \ A\cdot{}^gH=\left(\begin{array}{cc}
A^2&0\\
0&AC-B\overline{B}\\
\end{array}\right).
$$

For some specific algebraic examples, consider quadratically scaled roots of unity $z=\sqrt{n}\zeta$ for $|z|^2=n\in\mbb{Q}$ not a norm, or the generalizations of examples from \cite{BG}, $z=\sqrt[m]{a}+\sqrt{\sqrt[m]{a^2}-n}$ for $a\in\mbb{Q}$, $\sqrt[m]{a^2}<n$, and $n=|z|^2$ not a norm.  See Figures \ref{fig:circgauss}, \ref{fig:circeisen} for visualizations of the orbits of algebraic numbers satisfying $|z|^2=n$ for various $n$ and $d=1,3$.

\begin{figure}[hbt]
\begin{center}
\includegraphics[scale=.25]{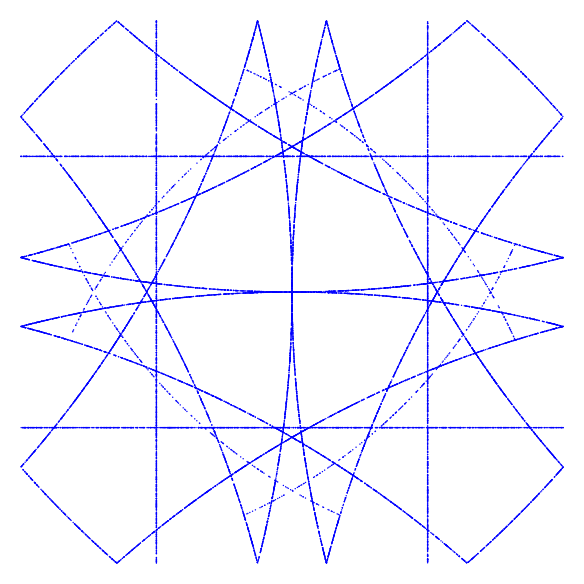}
\includegraphics[scale=.25]{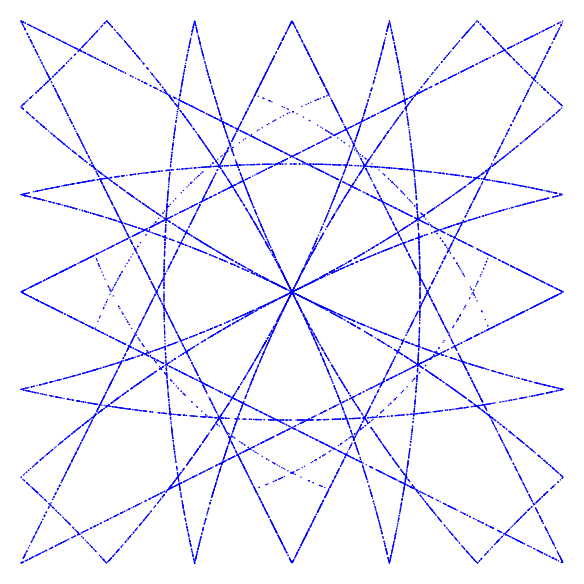}
\includegraphics[scale=.25]{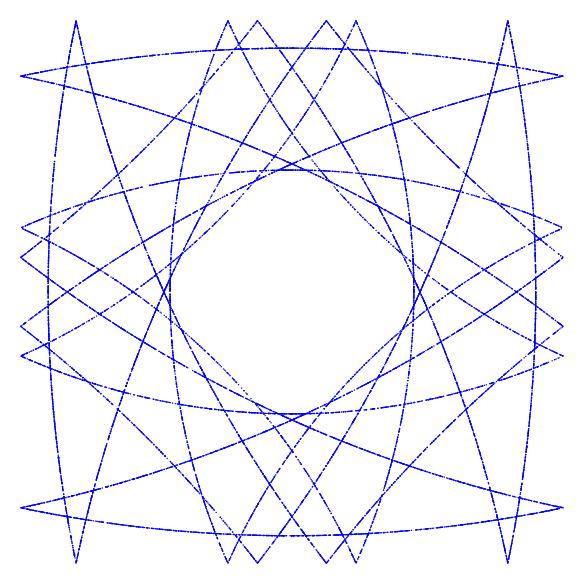}
\includegraphics[scale=.25]{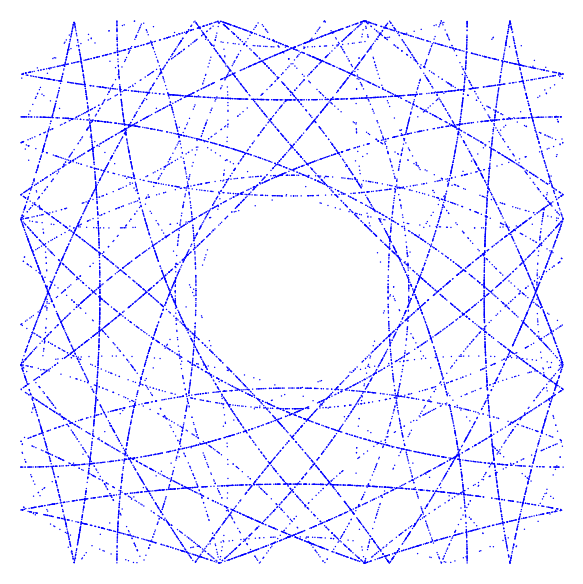}
\caption{20,000 iterates of $T$ for $z=\sqrt[3]{2}+\sqrt{\sqrt[3]{4}-n}$ over $\mbb{Q}(\sqrt{-1})$ with $n=4,5,6,7$.}
\label{fig:circgauss}
\end{center}
\end{figure}

\begin{figure}[hbt]
\begin{center}
\includegraphics[scale=.25]{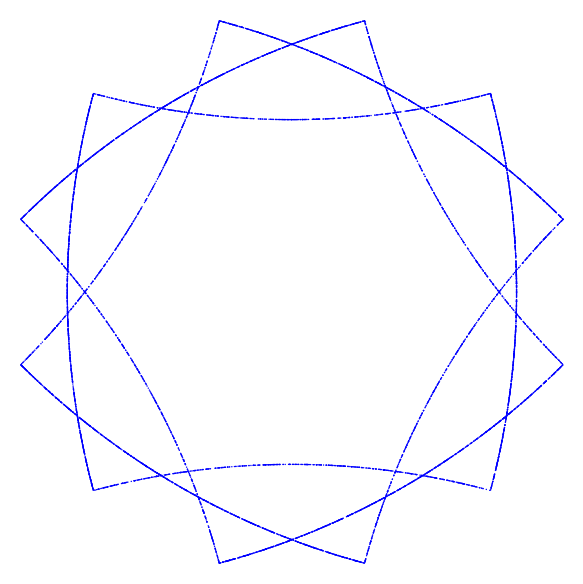}
\includegraphics[scale=.25]{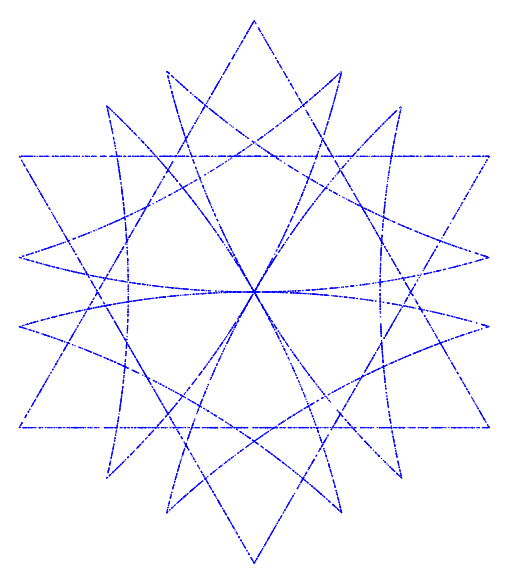}
\includegraphics[scale=.25]{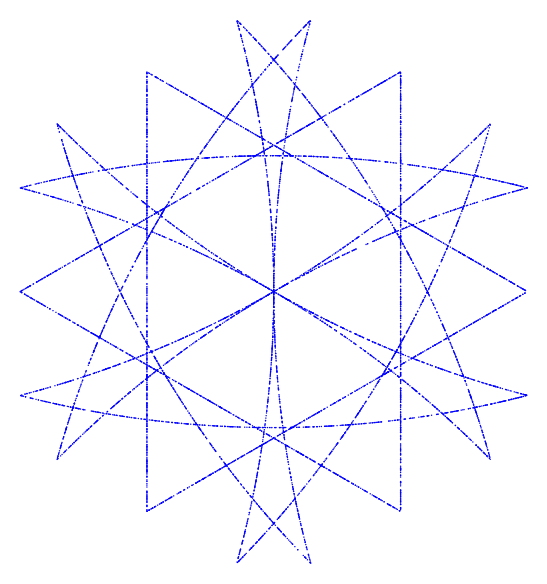}
\includegraphics[scale=.25]{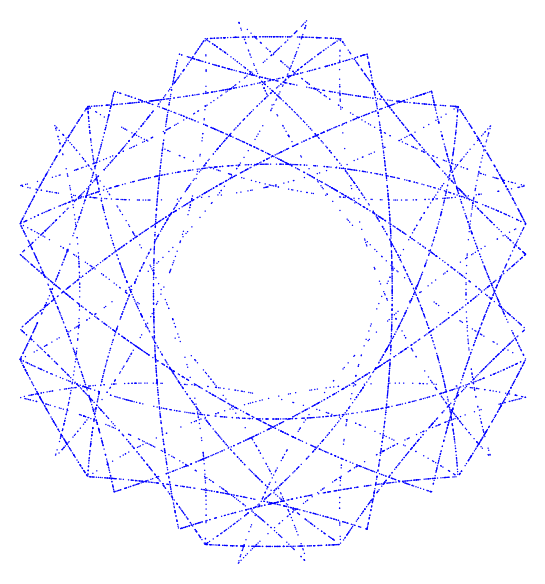}
\caption{10,000 iterates of $T$ for $z=\sqrt[3]{2}+\sqrt{\sqrt[3]{4}-n}$ over $\mbb{Q}(\sqrt{-3})$ with $n=2,3,4,5$.}
\label{fig:circeisen}
\end{center}
\end{figure}

We would like to characterize the badly approximable algebraic numbers captured above.  One such characterization comes from a parameterization of the algebraic numbers on the unit circle (taken from the mathoverflow post \cite{KCd}).

\begin{lemma}[\cite{KCd}]\label{lemma:unit_circ}
The algebraic numbers $w$ on the unit circle, $w\overline{w}=1$, are those numbers of the form
$$
w=\frac{u\pm\sqrt{u^2-4}}{2}
$$
where $u$ is a real algebraic number in the interval $[-2,2]$.  If $u\neq\pm2$, the minimal polynomial $f$ of $w$ is
$$
f(t)=t^{m}g(t+1/t)
$$
where $g(t)$ is the minimal polynomial of $u$, $\deg(g)=m$.  In particular, the degree of $f$ is even.
\end{lemma}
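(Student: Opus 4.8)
The plan is to separate the parameterization from the computation of the minimal polynomial. For the parameterization, given $w$ with $w\overline{w}=1$ I would set $u=w+\overline{w}=w+w^{-1}$. Since $\overline{w}=1/w$, this $u$ equals $2\,\mathrm{Re}(w)$, so it is real with $|u|\leq 2$, and it is algebraic over $\mathbb{Q}$ whenever $w$ is (being a rational function of $w$). From $w\overline{w}=1$ together with $w+\overline{w}=u$ one gets $w^{2}-uw+1=0$, i.e. $w=(u\pm\sqrt{u^{2}-4})/2$. Conversely, for any real algebraic $u\in[-2,2]$ the number $w=(u\pm\sqrt{u^{2}-4})/2$ is algebraic, being algebraic over the algebraic extension $\mathbb{Q}(u)$; and writing $\sqrt{u^{2}-4}=i\sqrt{4-u^{2}}$ gives $w\overline{w}=(u^{2}+(4-u^{2}))/4=1$, so $w$ lies on the unit circle.

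For the minimal polynomial, assume $u\neq\pm 2$, let $g$ be the (monic) minimal polynomial of $u$ with $\deg g=m$, and set $f(t)=t^{m}g(t+1/t)$. First I would check that $f$ is genuinely a polynomial of degree $2m$: expanding $(t+1/t)^{k}=\sum_{j}\binom{k}{j}t^{k-2j}$ shows $g(t+1/t)$ is a Laurent polynomial supported on exponents $-m,\dots,m$, with the two extreme coefficients both equal to the leading coefficient of $g$, so multiplying by $t^{m}$ clears the poles and yields a monic polynomial of degree $2m$ (one also sees it is palindromic). Since $f(w)=w^{m}g(w+w^{-1})=w^{m}g(u)=0$, the number $w$ is a root of $f$.

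It then remains to show $[\mathbb{Q}(w):\mathbb{Q}]=2m$, for then $f$, being monic of degree $2m$ and vanishing at $w$, must be the minimal polynomial of $w$; in particular $\deg f=2m$ is even. I would argue through the tower $\mathbb{Q}\subseteq\mathbb{Q}(u)\subseteq\mathbb{Q}(w)$: the lower step has degree $m$ by the choice of $g$, and the upper step has degree at most $2$ because $w$ satisfies $x^{2}-ux+1$ over $\mathbb{Q}(u)$. The crux---and the one genuine obstacle---is that the upper step has degree \emph{exactly} $2$: the field $\mathbb{Q}(u)$ is real (as $u$ is a real number), while $w=(u+i\sqrt{4-u^{2}})/2$ is non-real precisely because $u\neq\pm 2$, so $w\notin\mathbb{Q}(u)$. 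Hence $[\mathbb{Q}(w):\mathbb{Q}(u)]=2$, and multiplicativity of degrees gives $[\mathbb{Q}(w):\mathbb{Q}]=2m=\deg f$, which completes the proof (and incidentally shows $f$ is irreducible over $\mathbb{Q}$).
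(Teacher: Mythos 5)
Your proof is correct, but it establishes the minimal-polynomial claim by a genuinely different route than the paper. The paper works ``top-down'': it starts from the minimal polynomial $f$ of $w$, notes that $w$ and $\overline{w}=1/w$ share this minimal polynomial, deduces that $f$ is palindromic of even degree $d$, and then invokes the structural fact that every even-degree palindromic polynomial can be written as $x^{d/2}g(x+1/x)$, from which $g$ is identified as the minimal polynomial of $u$ (its irreducibility being forced by that of $f$). You work ``bottom-up'': starting from the minimal polynomial $g$ of $u$, you form $f(t)=t^{m}g(t+1/t)$, check it is monic of degree $2m$ and vanishes at $w$, and then pin down $[\mathbb{Q}(w):\mathbb{Q}]=2m$ via the tower $\mathbb{Q}\subseteq\mathbb{Q}(u)\subseteq\mathbb{Q}(w)$, using that $\mathbb{Q}(u)$ is a real field while $w$ is non-real when $u\neq\pm2$. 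Your route has the advantage of actually \emph{proving} that $f$ is irreducible (equivalently, that the upper step of the tower has degree exactly $2$), a point the paper's converse direction asserts without justification; the paper's route, in exchange, yields the pleasant characterization of even-degree palindromic polynomials and the double-cover picture $e^{i\theta}\mapsto 2\cos\theta$ relating the roots of $f$ on the circle to the roots of $g$ in $(-2,2)$. Both arguments are complete for the purposes of Corollary \ref{cor:algebraic}.
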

\begin{proof}
We know that $w, \overline{w}=1/w$ have the same minimal polynomial $f(x)\in\mbb{Q}[x]$, say of degree $d$.  One can deduce that $x^df(1/x)=f(x)$ so that $f(x)$ is palindromic (if $f(x)=\sum_kf_kx^k$ then $f_{d-k}=f_k$) and since the roots come in reciprocal pairs, $d$ is even.  The even degree palindromic polynomials are of the form $f(x)=x^{d/2}g(x+1/x)$ for some polynomial $g$
\begin{align*}
f(x)&=\sum_{k=0}^df_kx^k=x^{d/2}\sum_{k=0}^{d/2}f_k(x^k+1/x^k)=x^{d/2}\left(r(x+1/x)+\sum_{k=0}^{d/2}f_k(x+1/x)^k\right)\\
&=x^{d/2}g(x+1/x), \ g(x)=r(x)+\sum_{k=0}^{d/2}f_kx^k,
\end{align*}
noting that the difference $(x+1/x)^k-(x^k+1/x^k)$ is a polynomial in $x+1/x$ by symmetry of the binomial coefficients.  The roots of the even degree palindromic $f$ on the circle double cover the roots of $g$ in the interval $(-2,2)$ (via $w=e^{i\theta}\mapsto u=2\cos\theta$).  Conversely, taking an irreducible polynomial $g(x)\in\mbb{Q}[x]$ of degree $d/2$ with a root in the interval $(-2,2)$ gives a degree $d$ irreducible polynomial $f(x)=x^{d/2}g(x+1/x)$ with roots on the unit circle.  
\end{proof}

Hence we have the following corollary describing the badly approximable algebraic numbers coming from Corollary \ref{cor:aniso} and Theorem \ref{thm:badapproxcirc}.

\begin{cor}\label{cor:algebraic}
For any imaginary quadratic field $K$, there are algebraic numbers of each even degree over $\mbb{Q}$ that are badly approximable over $K$.  Specifically, for any real algebraic number $u\in[-2,2]$, any positive $n\in\mbb{Q}\setminus N^K_{\mbb{Q}}(K)$, and any $t\in K$, the number
$$
z=t+\sqrt{n}\cdot\frac{u\pm\sqrt{u^2-4}}{2}
$$
is badly approximable.
\end{cor}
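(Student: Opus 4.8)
The plan is to reduce the ``Specifically'' clause directly to Theorem~\ref{thm:badapproxcirc} by writing down the relevant Hermitian form, and then to realize every even degree with an explicit subfamily of the listed examples. First I would record that, with $w=\frac{u\pm\sqrt{u^2-4}}{2}$ and $u\in[-2,2]$ real, the radical $\sqrt{u^2-4}$ is purely imaginary (or $0$), so a direct computation gives $w\overline{w}=\frac{u^2-(u^2-4)}{4}=1$; this is the easy half of Lemma~\ref{lemma:unit_circ}. Consequently $|z-t|^2=n\,|w|^2=n$, which says that $[z:1]$ is a zero of the $K$-rational binary Hermitian form $H(z,w)=z\overline{z}-t\,\overline{z}w-\overline{t}\,z\overline{w}+(t\overline{t}-n)w\overline{w}$, having $A=1\in\mathbb{Q}$, $B=t\in K$, $C=t\overline{t}-n=N^K_{\mathbb{Q}}(t)-n\in\mathbb{Q}$, and $\Delta(H)=AC-B\overline{B}=-n$ (note this is independent of $t$). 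Since $n>0$ the form is indefinite, and since $-\Delta(H)=n\notin N^K_{\mathbb{Q}}(K)$ the factorization $AH(z,w)=|Az-Bw|^2+\Delta|w|^2$ noted earlier shows $H$ is anisotropic; Theorem~\ref{thm:badapproxcirc} then applies verbatim and yields that $z$ is badly approximable over $K$.

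For the degree statement I would not track the degree of $z=t+\sqrt{n}\,w$ in general, but instead specialize to the family already mentioned in the text. Fix $a=2$, and for $m\ge 1$ pick a positive rational $n\notin N^K_{\mathbb{Q}}(K)$ with $n>\sqrt[m]{4}$; such $n$ exist since there are infinitely many primes inert in $K$, none of which lies in $N^K_{\mathbb{Q}}(K)$. Set $z_m=\sqrt[m]{2}+\sqrt{\sqrt[m]{4}-n}=\sqrt[m]{2}+i\sqrt{n-\sqrt[m]{4}}$. Then $|z_m|^2=\sqrt[m]{4}+(n-\sqrt[m]{4})=n$, so $z_m$ lies on $Z(H)$ for $H=\mathrm{diag}(1,-n)$, an indefinite form that is anisotropic because $n$ is not a norm; hence $z_m$ is badly approximable over $K$ by Theorem~\ref{thm:badapproxcirc} (equivalently by the clause just proved, with $t=0$).

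It remains to check that $[\mathbb{Q}(z_m):\mathbb{Q}]=2m$. Since $|z_m|^2=n$ gives $\overline{z_m}=n/z_m$, we have $z_m+n/z_m=z_m+\overline{z_m}=2\sqrt[m]{2}\in\mathbb{Q}(z_m)$, so $\mathbb{Q}(\sqrt[m]{2})\subseteq\mathbb{Q}(z_m)$, a subfield of degree $m$ over $\mathbb{Q}$ by Eisenstein at $2$. Moreover $z_m$ is a root of $X^2-2\sqrt[m]{2}\,X+n\in\mathbb{Q}(\sqrt[m]{2})[X]$, whose discriminant $4(\sqrt[m]{4}-n)$ is negative; since $\mathbb{Q}(\sqrt[m]{2})$, embedded via the real $m$-th root, lies in $\mathbb{R}$, it cannot contain the purely imaginary $\sqrt{\sqrt[m]{4}-n}$, so this quadratic is irreducible over $\mathbb{Q}(\sqrt[m]{2})$ and $[\mathbb{Q}(z_m):\mathbb{Q}(\sqrt[m]{2})]=2$. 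Thus $[\mathbb{Q}(z_m):\mathbb{Q}]=2m$, and letting $m$ range over the positive integers produces a badly approximable algebraic number of every even degree. I expect the only genuine content beyond citing Theorem~\ref{thm:badapproxcirc} and Lemma~\ref{lemma:unit_circ} to be this last reality argument, which keeps the quadratic extension $\mathbb{Q}(z_m)/\mathbb{Q}(\sqrt[m]{2})$ from collapsing; the remark that positive rationals outside $N^K_{\mathbb{Q}}(K)$ are plentiful (inert primes suffice) is routine.
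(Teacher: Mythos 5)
Your proposal is correct, and for the ``badly approximable'' clause it takes exactly the paper's intended route: check $w\overline{w}=1$ (the easy direction of Lemma~\ref{lemma:unit_circ}), observe that $z=t+\sqrt{n}\,w$ then lies on $Z(H)$ for the $K$-rational form with $A=1$, $B=t$, $C=N^K_{\mbb{Q}}(t)-n$ and $\Delta(H)=-n$, and invoke Theorem~\ref{thm:badapproxcirc} since $n\notin N^K_{\mbb{Q}}(K)$ makes $H$ anisotropic. Where you genuinely diverge is the ``every even degree'' claim. The paper delegates this to the full strength of Lemma~\ref{lemma:unit_circ} (the minimal polynomial of $w$ is $x^m g(x+1/x)$, so $\deg_{\mbb{Q}}(w)=2\deg_{\mbb{Q}}(u)$), leaving implicit the step from $\deg_{\mbb{Q}}(w)=2m$ to $\deg_{\mbb{Q}}(t+\sqrt{n}\,w)=2m$, which is not automatic for arbitrary $t,n,u$. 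You instead verify the degree directly for the explicit subfamily $z_m=\sqrt[m]{2}+i\sqrt{n-\sqrt[m]{4}}$ (the \cite{BG}-type examples already displayed in the text), recovering $\sqrt[m]{2}=\tfrac12(z_m+n/z_m)$ inside $\mbb{Q}(z_m)$ and using the reality of $\mbb{Q}(\sqrt[m]{2})$ to keep the top quadratic step from collapsing. This is a more self-contained and, for the degree assertion, a more airtight argument than what the paper records; the only cost is that it realizes each even degree by one explicit family rather than characterizing the degrees of all the numbers in the corollary's parameterization. Your remark that inert primes supply admissible $n$ is correct and fills a small existence point the paper also leaves unstated.
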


For instance, the examples in Figures \ref{fig:circgauss} and \ref{fig:circeisen} have $t=0$ and $u=2^{4/3}/\sqrt{n}$.
%%%%%%%%%%%%%%%%%%%%%%%%%%%%%%%%%%%%%%%%%%%%%%%%%%%%%%%%%%%%%%%%%%%%%%%%%%%%%%%%%%%%%%%%%%%%%%
%%%%%%%%%%%%%%%%%%%%%%%%%%%%%%%%%%%%%%%%%%%%%%%%%%%%%%%%%%%%%%%%%%%%%%%%%%%%%%%%%%%%%%%%%%%%%%
%%%%%%%%%%%%%%%%%%%%%%%%%%%%%%%%%%%%%%%%%%%%%%%%%%%%%%%%%%%%%%%%%%%%%%%%%%%%%%%%%%%%%%%%%%%%%%
%%%%%%%%%%%%%%%%%%%%%%%%%%%%%%%%%%%%%%%%%%%%%%%%%%%%%%%%%%%%%%%%%%%%%%%%%%%%%%%%%%%%%%%%%%%%%%
%%%%%%%%%%%%%%%%%%%%%%%%%%%%%%%%%%%%%%%%%%%%%%%%%%%%%%%%%%%%%%%%%%%%%%%%%%%%%%%%%%%%%%%%%%%%%%
\section*{Appendix: Monotonicity of denominators for $d=7,11$}
The purpose of this appendix is to prove Proposition \ref{prop:mono} for $d=7,11$.  Proofs for $d=1,3$ are found in \cite{Hur1} and a proof for $d=2$ in \cite{Lu} ($\S VII$, Satz 11).  Monotonicity for $d=7,11$ was stated in \cite{La} without proof (for reasons obvious to anyone reading what follows).  All of the proofs follow the same basic outline, with $d=11$ being the most tedious.
\begin{proof}[Proof of Proposition \ref{prop:mono}.]
For the purposes of this proof, define $k_n=q_n/q_{n-1}$; we will show $|k_n|>1$.  We will also use the notation $B_t(r)$ for the ball of radius $t$ centered at $r\in\mc{O}$.  When $n=1$ we have $|k_1|=|a_1|\geq1/\rho>1$.  The recurrence $k_n=a_n+1/k_{n-1}$ is immediate from the definitions.  We argue by contradiction following \cite{Hur1}.  Suppose $n>1$ is the smallest value for which $|k_n|\leq1$ so that $|k_i|>1$ for $1\leq i<n$.  Then
$$
|a_n|=\left|k_n-1/k_{n-1}\right|<2.
$$
For small values of $a_i$, those for which $(a_i+V)\cap V^{-1}\cap(\mbb{C}\setminus V^{-1})\neq\emptyset$, the values of $a_{i+1}$ are restricted (cf. Figure \ref{fig:basediag}).  More generally, there are arbitrarily long ``forbidden sequences'' stemming from these small values of $a_i$.  We will use some of the forbidden sequences that arise in this fashion to show that the assumption $k_n<1$ leads to a contradiction.
\begin{itemize}
\item{($d=7$)}
The only allowed values of $a_n$ for which $|a_n|<2$ are $a_n=\frac{\pm1\pm\sqrt{-7}}{2}$.  By symmetry, we suppose $a_n=\frac{1+\sqrt{-7}}{2}=:\omega$ without loss of generality.  It follows that $k_n\in B_1(\omega)\cap B_1(0)$.  Subtracting $\omega=a_n$, we see that $1/k_{n-1}\in B_1(0)\cap B_1(-\omega)$.  Applying $1/z$ then gives $k_{n-1}\in B_1(\omega-1)\setminus B_1(0)$.  Since $k_{n-1}=a_{n-1}+1/k_{n-2}$, the only possible values for $a_{n-1}$ are $\omega$, $\omega-1$, $\omega-2$, $2\omega-1$, and $2\omega-2$.  One can verify that the two-term sequences
\begin{center}
\begin{tabular}{ ||c|c|| } 
\hline
$a_i$&$a_{i+1}$\\
\hline
\hline
$\omega-2$&$\omega$\\
\hline
$\omega-1$&$\omega$\\
\hline
$\omega$&$\omega$\\
\hline
\end{tabular}
\end{center}
are forbidden, so that $a_{n-1}=2\omega-1$ or $2\omega-2$.  We now have either $k_{n-1}\in B_1(2\omega-1)\cap B_1(\omega-1)$ if $a_{n-1}=2\omega-1$, or $k_{n-1}\in B_1(2\omega-2)\cap B_1(\omega-1)$ if $a_{n-1}=2\omega-2$.  Subtracting $a_{n-1}$ and applying $1/z$ shows that $a_{n-2}=2\omega-1$ or $2\omega-2$ if $a_{n-1}=2\omega-1$, and $a_{n-2}=2\omega$ or $2\omega-1$ if $a_{n-1}=2\omega-2$.  

Continuing shows that for $i\leq n-1$
$$
k_i\in\left(B_1(2\omega-2)\cup B_1(2\omega-1)\cup B_1(2\omega)\right)\cap\left(B_1(\omega-1)\cup B_1(\omega)\right),
$$
the green region on the left in Figure \ref{fig:contra}.  This is impossible; for instance $k_1=a_1\in\mc{O}$ but the region above contains no integers.
\item{($d=11$)}
The only allowed values of $a_n$ for which $|a_n|<2$ are $\frac{\pm1\pm\sqrt{-11}}{2}$.  By symmetry, we suppose $a_n=\frac{1+\sqrt{-11}}{2}=:\omega$ without loss of generality.  Hence $k_n\in B_1(0)\cap B_1(\omega)$.  Subtracting $a_n$ and applying $1/z$ shows that $k_{n-1}\in B_{1/2}(\frac{\omega-1}{2})\setminus B_1(0)$ and $a_{n-1}=\omega-1$ or $\omega$.  If $a_{n-1}=\omega-1$, subtracting, applying $1/z$ and using the three-term forbidden sequences
\begin{center}
\begin{tabular}{ ||c|c|c|| } 
\hline
$a_i$&$a_{i+1}$&$a_{i+2}$\\
\hline
\hline
$\omega-1$&$\omega-1$&$\omega$\\
\hline
$\omega$&$\omega-1$&$\omega$\\
\hline
$\omega+1$&$\omega-1$&$\omega$\\
\hline
\end{tabular}
\end{center}
shows that $a_{n-2}=2\omega$ or $2\omega-1$, with $k_{n-2}\in B_1(2\omega)\cap B_1(\omega)$ or $B_1(2\omega-1)\cap B_1(\omega)$ respectively.

If $a_{n-1}=\omega$, subtracting and applying $1/z$ gives $a_{n-2}=\omega-2$ or $\omega-1$ with $k_{n-2}\in B_{1/2}(\frac{\omega-2}{2})\cap B_1(\omega-2)$ or $(B_{1/2}(\frac{\omega-2}{2})\cap B_1(\omega-1))\setminus B_{1/2}(\frac{\omega-1}{2})$ respectively.  If $a_{n-2}=\omega-1$, subtracting $a_{n-2}$, applying $1/z$, and using the three-term forbidden sequences
\begin{center}
\begin{tabular}{ ||c|c|c|| } 
\hline
$a_i$&$a_{i+1}$&$a_{i+2}$\\
\hline
\hline
$\omega-1$&$\omega-1$&$\omega$\\
\hline
$\omega-2$&$\omega-1$&$\omega$\\
\hline
\end{tabular}
\end{center}
shows that $a_{n-3}=2\omega-2$ or $2\omega-1$ with $k_{n-3}\in B_1(2\omega-2)\cap B_1(\omega-1)$ or $B_1(2\omega-1)\cap B_1(\omega-1)$ respectively.  If $a_{n-2}=\omega-2$, subtracting $a_{n-2}$ and applying $1/z$ shows that $a_{n-3}=\omega$ or $\omega+1$, with $k_{n-3}\in(B_1(\omega)\cap B_{1/2}(\frac{\omega+1}{2}))\setminus B_1(0)$ or $B_1(\omega+1)\cap B_{1/2}(\frac{\omega+1}{2})$ respectively.  If $a_{n-3}=\omega+1$, we loop back into a symmetric version of a case previously considered (namely $k_{n-4}\in B_{1/2}(\frac{\omega-2}{2})\setminus B_1(0)$ and $a_{n-4}=\omega-1$ or $\omega-2$).  If $a_{n-3}=\omega$, subtracting $a_{n-3}$, applying $1/z$, and using the forbidden sequences
\begin{center}
\begin{tabular}{ ||c|c|c|c|c|| } 
\hline
$a_i$&$a_{i+1}$&$a_{i+2}$&$a_{i+3}$&$a_{i+4}$\\
\hline
\hline
$\omega-1$&$\omega$&$\omega-2$&&\\
\hline
$\omega$&$\omega$&$\omega-2$&$\omega$&\\
\hline
$\omega+1$&$\omega$&$\omega-2$&$\omega$&$\omega$\\
\hline
\end{tabular}
\end{center}
shows that $a_{n-4}=2\omega$ or $2\omega-1$ with $k_{n-4}\in B_1(2\omega)\cap B_1(\omega)$ or $B_1(2\omega-1)\cap B_1(\omega)$ resepectively.

Continuing, we find $k_i$ for $i\leq n-1$ restricted to the region depicted on the right in Figure \ref{fig:contra}.  This region contains no integers, contradicting $k_1\in\mc{O}$.
\end{itemize}
\begin{figure}[hbt]
\begin{center}
\includegraphics[scale=.5]{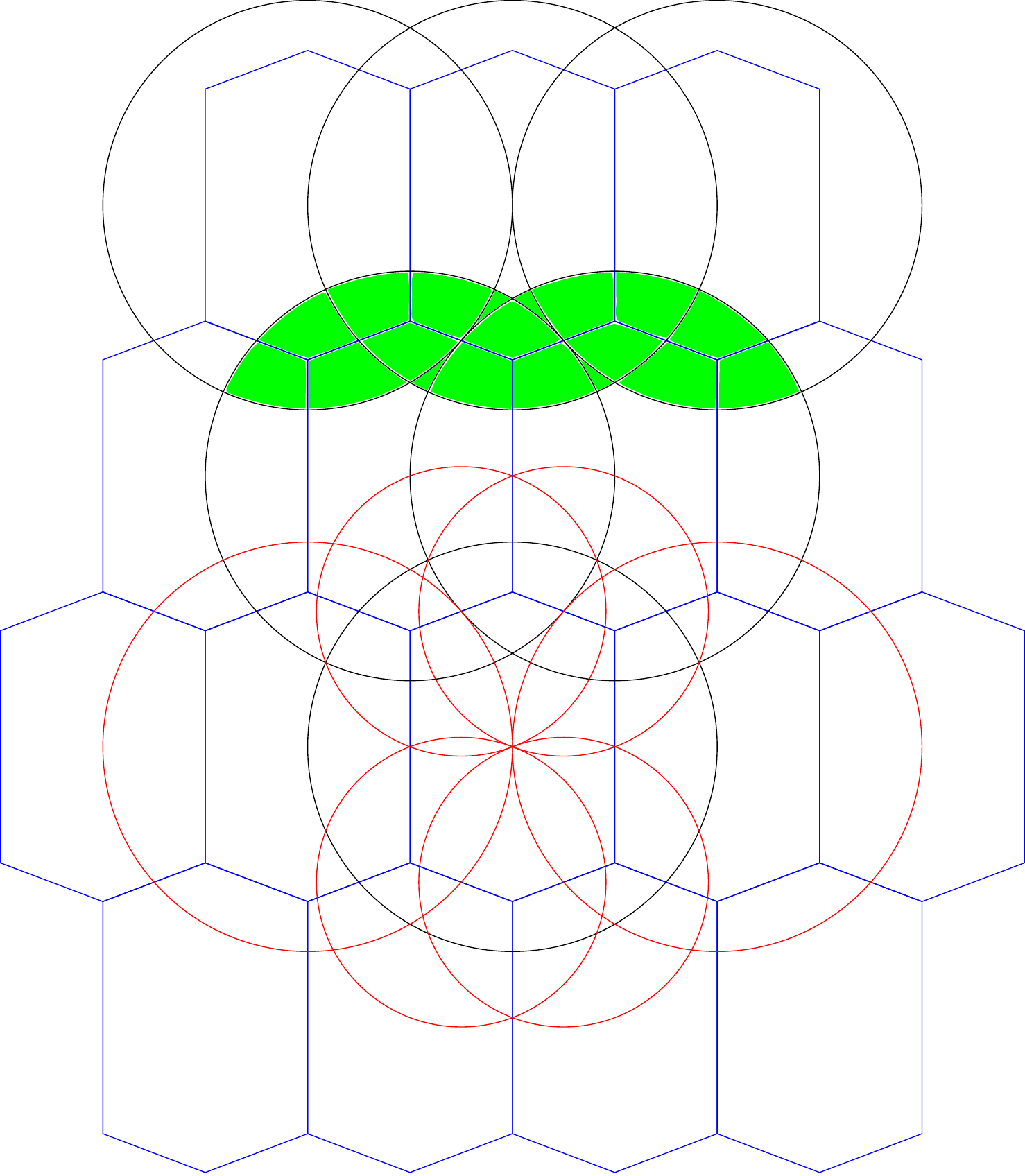}
\includegraphics[scale=.5]{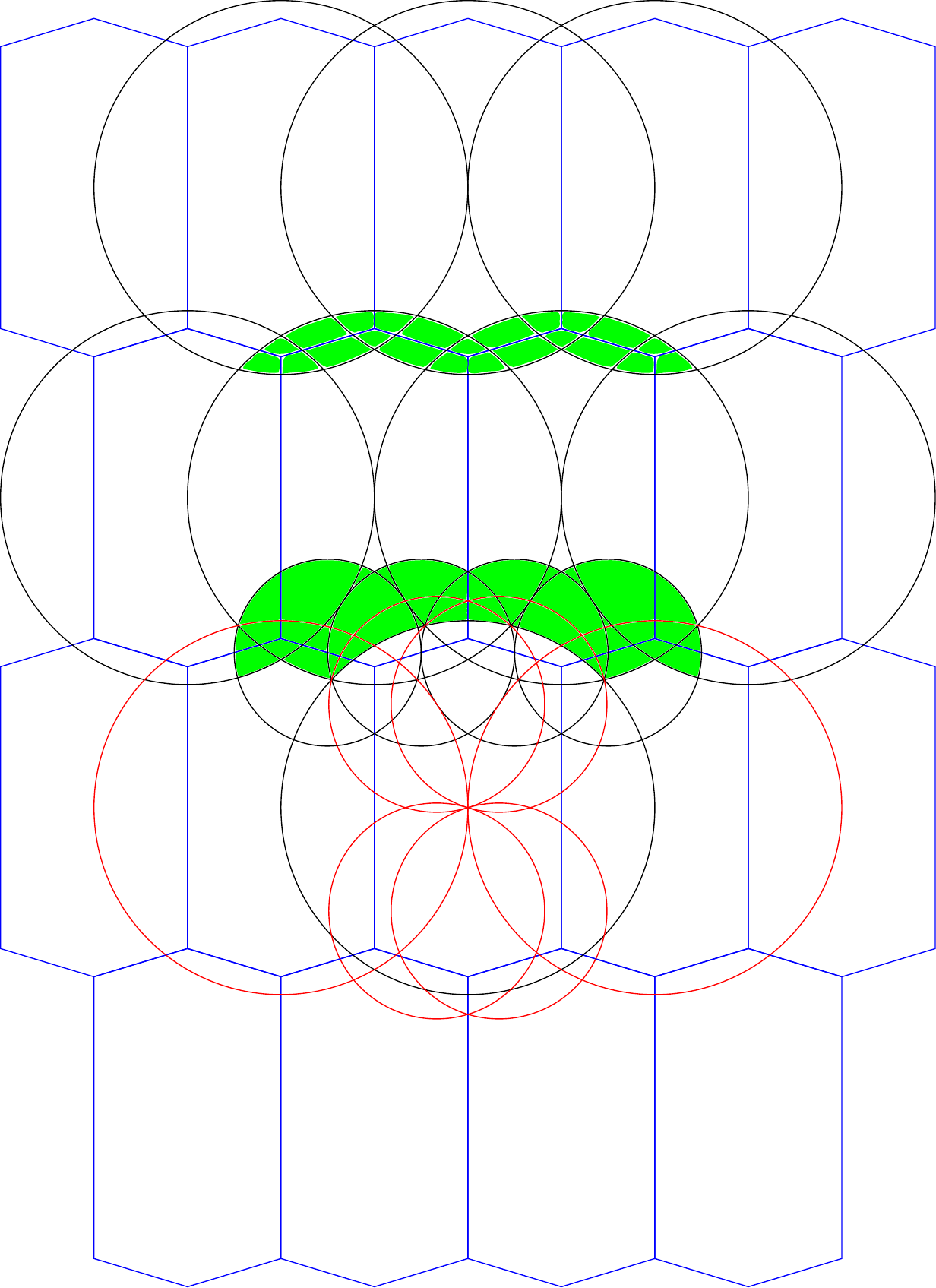}
\caption{In the proof of Proposition \ref{prop:mono}, the assumption $k_n<1$ with $a_n=\frac{1+\sqrt{-7}}{2}$ (left) or $a_n=\frac{1+\sqrt{-11}}{2}$ (right) leads to restricted potential values for $k_i$ with $i<n$ (green regions).}
\label{fig:contra}
\end{center}
\end{figure}
\end{proof}
\FloatBarrier

\end{document}